\title{Homotopy coherent diagrams and approximate fibrations}
\author{Wolfgang Steimle}
\address{Universit\"at Bonn\\
               Mathematisches Institut\\
               Endenicher Allee~60,
               D-53115 Bonn, Germany}
\email{steimle@math.uni-bonn.de}
\date{September 2011}
\keywords{Approximate fibration, homotopy coherence, ENR}
\subjclass[2010]{55R65, 18D20, 55M15, 19J10}
\DeclareMathAlphabet{\matheurm}{U}{eur}{m}{n}
\DeclareMathOperator{\colim}{colim}
\DeclareMathOperator{\cyl}{Hocolim}
\DeclareMathOperator{\Cyl}{cyl}
\DeclareMathOperator{\id}{id}
\DeclareMathOperator{\map}{map}
\DeclareMathOperator{\mor}{mor}
  \newcommand{\IN}{\mathbb{N}}
  \newcommand{\calc}{\mathcal{C}}
  \newcommand{\cald}{\mathcal{D}}
  \newcommand{\cale}{\mathcal{E}}
  \newcommand{\calf}{\mathcal{F}}
  \newcommand{\calu}{\mathcal{U}}
\newcommand{\inv}{^{-1}}
\theoremstyle{plain}
\newtheorem{theorem}{Theorem}[section]
\newtheorem{lemma}[theorem]{Lemma}
\newtheorem{corollary}[theorem]{Corollary}
\newtheorem{proposition}[theorem]{Proposition}
\newtheorem{definition}[theorem]{Definition}
\theoremstyle{definition}
\newtheorem{example}[theorem]{Example}
\newtheorem{remark}[theorem]{Remark}
\newtheorem{fact}[theorem]{Fact}
\theoremstyle{remark}
\newcommand{\arincl}{\ar@{^{(}->}}
\newcommand{\arinclinv}{\ar@{_{(}->}}
\newcommand{\xycomsquare}[8]                
{$$\xymatrix{#1 \ar[r]^-{#2} \ar[d]^{#4} &
    #3 \ar[d]^{#5}  \\
    #6\ar[r]^-{#7} & #8 }$$}
\newcommand{\hc}{{h.c.~}}
\newcommand{\obj}{\operatorname{obj}}
\newcommand{\eval}{\operatorname{eval}}
\newcommand{\op}{^{op}}
\newcommand{\hocolim}{\operatorname{hocolim}}
\newcommand{\Hocolim}{\operatorname{Hocolim}^{\operatorname{BK}}}
\newcommand{\coeq}{\operatorname{coeq}}
\newcommand{\Coh}{\operatorname{Coh}}
\newcommand{\Ho}{\operatorname{Ho}}
\newcommand{\Proj}{\operatorname{Proj}}
\begin{document}

\begin{abstract}
Let $p$ be a fibration over a finite simplicial complex, whose fibers have the homotopy type of finite simplicial complexes. Then $p$ is equivalent to an approximate fibration whose total space is a compact ENR. The proof uses homotopy coherent diagrams and their homotopy colimits. We also comment on the simple homotopy type of the total space and give an application to the fibering of Hilbert cube manifolds.
\end{abstract}

\maketitle

\setcounter{tocdepth}{1}
\tableofcontents

\section{Introduction}

It is well-known that any map $f\colon A\to B$ between topological spaces can be factored as
\begin{equation}\label{intro:fundamental_factorization}
\xymatrix{
A \ar[rr]^\varphi \ar[rd]_f && E \ar[ld]^p\\
& B
}
\end{equation}
where $\varphi$ is a homotopy equivalence and $p$ is a fibration. A standard way to obtain such a factorization involves the space $B^I$ of paths in $B$. While this construction very convenient for many purposes in homotopy theory, the geometric properties of $E$ are very hard to control. For instance, even if $B$ is a finite simplicial complex and the homotopy fibers of $f$ have the homotopy type of finite simplicial complexes, the path-space construction yields a space $E$ which is usually infinite-dimensional.

The purpose of this work is to show that much better control on $E$ can be obtained if we relax the condition on $p$, demanding that it be an \emph{approximate fibration} rather than a fibration. Recall the definition of an approximate fibration:

\begin{definition}
A map $p\colon E\to B$ between topological spaces is an \emph{approximate fibration} if, given any commutative diagram with solid arrows
\[\xymatrix{
X\times \{0\} \ar@{^{(}->}[d] \ar[rr]^{f} && E \ar[d]^p\\
X\times [0,1] \ar[rr]^{h} \ar@{.>}[rru]^H && B  
}\]
and any open cover $\calu$ of $B$, there exists a (dotted) map $H$ such that the upper triangle commutes strictly and the lower triangle commutes ``up to $\calu$'', i.e.~for any $(x,t)\in X\times [0,1]$ there is a $U\in\calu$ such that both $p\circ H(x,t)$ and $h(x,t)$ lie in $U$.
\end{definition}

Approximate fibrations arise naturally in the study of topological manifolds and are closely related to controlled topology. The main result of this work is:

\begin{theorem}\label{intro:main_theorem}
Let $p\colon E\to B$ be a fibration over a finite simplicial complex. Suppose that the fibers of $p$ have the homotopy type of finite simplicial complexes. Then there exists a compact ENR $X$ and a commutative diagram
\[\xymatrix{
X \ar[rr]^\varphi \ar[rd]_q && E \ar[ld]^p\\
 & B
}\]
where $q$ is an approximate fibration and $\varphi$ is a homotopy equivalence.
\end{theorem}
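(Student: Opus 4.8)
The plan is to take the fibration apart over the simplices of $B$, replace each fiber by a finite complex, and reassemble everything as a homotopy colimit which will automatically be a compact ENR and will map to $B$ by an approximate fibration.

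\textbf{Step 1: From $p$ to a homotopy coherent diagram.} Let $\mathcal{C}$ be the poset of simplices of $B$ ordered by inclusion, so that the nerve $N\mathcal{C}$ is the barycentric subdivision $\sd B$, canonically homeomorphic to $B$. Restricting $p$ to a simplex $\sigma$ gives a fibration $p^{-1}(\sigma)\to\sigma$ over a contractible base, hence one that is fiber-homotopy trivial; moreover each inclusion $p^{-1}(\tau)\hookrightarrow p^{-1}(\sigma)$ for $\tau\subseteq\sigma$ is a homotopy equivalence. Since $E=\colim_{\sigma\in\mathcal{C}}p^{-1}(\sigma)$ along closed cofibrations, $E$ is naturally the homotopy colimit of the diagram $\sigma\mapsto p^{-1}(\sigma)$, compatibly with the projections to $B\cong N\mathcal{C}$. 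Choosing fiberwise trivializations and recording the overlap data in the usual way yields a homotopy coherent diagram $\Phi\colon\mathcal{C}\to\topo$ with each $\Phi(\sigma)$ homotopy equivalent to the fiber $F$, all structure maps homotopy equivalences, and a natural equivalence $\hocolim\Phi\simeq E$ over $B$.

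\textbf{Step 2: Rigidification and the ENR property.} Each $\Phi(\sigma)\simeq F$ has the homotopy type of a finite complex; choosing finite complexes $X_\sigma$ with homotopy equivalences $X_\sigma\xrightarrow{\ \simeq\ }\Phi(\sigma)$ and transporting the coherent structure across these levelwise equivalences (a standard rectification property of homotopy coherent diagrams) produces a levelwise-equivalent homotopy coherent diagram $\Psi\colon\mathcal{C}\to\topo$ with $\Psi(\sigma)=X_\sigma$, hence $\hocolim\Psi\simeq\hocolim\Phi\simeq E$ over $B$. Set $X:=\hocolim\Psi$, formed by the Bousfield--Kan model. Because $\mathcal{C}$ is finite with chains of length at most $\dim B$, this $X$ is a \emph{finite} union of cells $\Delta^n\times X_{\sigma_0}$, each a compact ENR, glued along closed subcomplexes; by Borsuk's union theorem $X$ is a finite-dimensional compact ANR, i.e.\ a compact ENR. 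Let $q\colon X\to N\mathcal{C}\cong B$ be the canonical projection which forgets the $\Psi$-coordinate.

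\textbf{Step 3: $q$ is an approximate fibration.} This is the main point. Being an approximate fibration is a local property of the base over a polyhedron, so it suffices to verify it over the open stars $\operatorname{st}(\sigma)$ of the vertices $\sigma$ of $\sd B$, which cover $B$. Over $\operatorname{st}(\sigma)$ the preimage $q^{-1}(\operatorname{st}(\sigma))$ is built from the pieces $\Delta^n\times X_{\tau_0}$ with $\tau_0\le\sigma$; since every structure map of $\Psi$ is a homotopy equivalence, one can ``straighten'' all of these onto $X_\sigma$, producing a controlled fiber homotopy equivalence over $\operatorname{st}(\sigma)$ between $q$ and the projection $X_\sigma\times\operatorname{st}(\sigma)\to\operatorname{st}(\sigma)$. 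As the latter is a Hurewicz fibration and approximate fibrations are stable under controlled fiber homotopy equivalence, $q$ is an approximate fibration over each $\operatorname{st}(\sigma)$, hence over $B$. I expect this step --- making the local identification of $q^{-1}(\operatorname{st}(\sigma))$ precise and producing the straightening homotopy with explicit control over the open cover $\calu$ --- to be the genuine obstacle; the other steps are formal.

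\textbf{Step 4: Conclusion.} We have a homotopy equivalence $\psi\colon X\xrightarrow{\ \simeq\ }E$ with $p\circ\psi\simeq q$. Using that $p$ is a Hurewicz fibration, lift a homotopy from $q$ to $p\psi$ (run backwards) starting at $\psi$, and replace $\psi$ by the resulting map $\varphi\colon X\to E$; then $\varphi\simeq\psi$ is still a homotopy equivalence and $p\circ\varphi=q$ strictly. Thus $X$ is a compact ENR, $q$ an approximate fibration, $\varphi$ a homotopy equivalence, and the triangle commutes, which proves the theorem.
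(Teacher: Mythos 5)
Your Steps 1, 2 and 4 follow the paper's strategy quite closely (decompose over the poset of simplices, rectify to a diagram of compact ENRs, take the homotopy colimit, and finally use the homotopy lifting property of $p$ to make the triangle commute strictly), although even there you assert rather than prove the two geometric facts the paper establishes by induction on $\dim P$ via the push-out decomposition of Lemma \ref{lemma2}: that the homotopy colimit is a compact ENR (Proposition \ref{prop:cylinder_is_compact_ENR} -- ``finite union of cells glued along closed subcomplexes, hence ENR'' needs the intersections to be ANRs and the gluing maps to be cofibrations, which is exactly what the mapping-cylinder structure of Lemma \ref{lemma1} provides) and that the comparison map $\cyl_\sigma(E_\sigma)\to E$ is a homotopy equivalence.

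The genuine gap is Step 3, which you yourself flag as ``the genuine obstacle'' and then do not close. Two specific problems. First, the statement ``approximate fibrations are stable under controlled fiber homotopy equivalence'' is not an off-the-shelf fact in the form you need, and producing the claimed straightening of $q^{-1}(\operatorname{st}(\sigma))$ onto $X_\sigma\times\operatorname{st}(\sigma)$ \emph{with control in the base} is essentially equivalent to the assertion being proved; nothing in your sketch indicates how the deformation is to be kept small over $B$. Second, and more seriously, any such argument must use the ENR hypotheses in an essential way: the paper's Warsaw-circle remark after Lemma \ref{lem:AF_on_ordinary_mapping_cylinders} shows that a map which is locally a weak homotopy pull-back of a fibration need \emph{not} be an approximate fibration without them. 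The paper's actual route is to invoke the Hughes--Taylor--Williams criterion \cite[Theorem 12.15]{Hughes-Taylor-Williams(1990)} -- for a map between ENRs it suffices that the restriction over each member of a basis of open sets forms a weak homotopy pull-back with the whole map -- and to verify this by an induction on $\dim P$, identifying the relevant restrictions of $f_*$ with horizontal mapping cylinders (Lemmas \ref{lemma1} and \ref{lemma2}) and feeding them into Lemma \ref{lem:AF_on_ordinary_mapping_cylinders} together with Quillen's quasi-fibration theorem (Fact \ref{fact:quasi-fibration}) to get the weak homotopy pull-back property. Until you either reproduce this mechanism or give a genuine proof of your ``stability under controlled fiber homotopy equivalence'' claim (including where the ENR condition enters), the main assertion of the theorem remains unproved.
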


It follows that the fibration $p$ is equivalent in the sense of controlled topology to an approximate fibration whose total space is a compact ENR.

\begin{corollary}\label{intro:main_corollary}
Let $f\colon A\to B$ be a map between topological spaces, where $B$ is a finite simplicial complex and the homotopy fibers of $f$ have the homotopy type of finite simplicial complexes. Then there exists a factorization \eqref{intro:fundamental_factorization} where $\varphi$ is a homotopy equivalence, $p$ is an approximate fibration, and $E$ is a compact ENR.
\end{corollary}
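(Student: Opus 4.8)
The plan is to deduce the corollary from Theorem~\ref{intro:main_theorem} by first replacing $f$ with an honest fibration in the standard way. Concretely, I would pass to the mapping path space
\[ E' = \{(a,\gamma)\in A\times B^I \mid \gamma(0)=f(a)\}, \]
with $p'\colon E'\to B$, $p'(a,\gamma)=\gamma(1)$, and $\varphi'\colon A\to E'$, $\varphi'(a)=(a,\mathrm{const}_{f(a)})$. As is well known, $p'$ is a Hurewicz fibration, $\varphi'$ is a homotopy equivalence (in fact $A$ is a strong deformation retract of $E'$), and $p'\circ\varphi'=f$. The key observation is that the fibre of $p'$ over a point $b\in B$ is precisely the homotopy fibre of $f$ over $b$; hence, by hypothesis, every fibre of $p'$ has the homotopy type of a finite simplicial complex (over those components of $B$ that are disjoint from $f(A)$ the fibre is the empty complex, which causes no difficulty).

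Next I would apply Theorem~\ref{intro:main_theorem} to the fibration $p'\colon E'\to B$ over the finite simplicial complex $B$. This produces a compact ENR $X$, an approximate fibration $q\colon X\to B$, and a homotopy equivalence $\psi\colon X\to E'$ with $p'\circ\psi=q$.

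It remains to assemble the factorization. Put $E=X$ and $p=q$, so that $E$ is a compact ENR and $p$ is an approximate fibration. Choosing a homotopy inverse $\bar\psi\colon E'\to X$ of $\psi$, set $\varphi=\bar\psi\circ\varphi'\colon A\to X$; as a composite of homotopy equivalences this is a homotopy equivalence, and
\[ p\circ\varphi=q\circ\bar\psi\circ\varphi'=p'\circ\psi\circ\bar\psi\circ\varphi'\simeq p'\circ\varphi'=f, \]
so the triangle \eqref{intro:fundamental_factorization} commutes up to homotopy.

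I do not anticipate a real obstacle here: all of the geometric content---the construction of a compact-ENR model with the approximate homotopy lifting property---is already contained in Theorem~\ref{intro:main_theorem}, and what is left is formal. The only points that deserve a word of care are the identification of the fibres of the mapping path space with the homotopy fibres of $f$, and the fact that, since $\psi$ is merely a homotopy equivalence (not a strict map over $B$ admitting a section), the diagonal of \eqref{intro:fundamental_factorization} is recovered only up to homotopy; for the controlled-topology statement in question this homotopy-commutative version is precisely what is needed.
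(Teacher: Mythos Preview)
Your argument is correct and is precisely what the paper intends: the corollary is stated without proof, to be deduced from Theorem~\ref{intro:main_theorem} by first replacing $f$ with its path-space fibration (the very construction the introduction describes) and then applying the theorem. Your caveat that the resulting triangle commutes only up to homotopy is accurate and appropriately flagged; since $q$ is merely an approximate fibration one cannot straighten the homotopy by a lifting argument, but homotopy commutativity is all that is needed for the applications (e.g.\ the $Q$-manifold corollary that follows).
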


One advantage of $E$ being a compact ENR is that is has a canonical simple homotopy type. So if $A$ in diagram \eqref{intro:fundamental_factorization} is also a compact ENR, we may compute the Whitehead torsion $\tau(\varphi)$. In future joint work with F.T.~Farrell and W.~L\"uck, the author plans to combine this idea with methods of controlled topology to obtain $K$-theoretic obstructions to approximately fibering manifolds. In a similar spirit, here is a prototypical immediate application of Corollary \ref{intro:main_corollary} to infinite-dimensional topology:

\begin{corollary}
Let $M$ be a compact $Q$-manifold, i.e.~a manifold modelled on the Hilbert cube $Q=\Pi_{\mathbb{N}} [0,1]$, and let $f\colon M\to B$ be a map to a compact simplicial complex. If $M$ is simply-connected, then $f$ is homotopic to an approximate fibration.
\end{corollary}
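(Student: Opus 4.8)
The plan is to apply Corollary~\ref{intro:main_corollary} to $f$ and then to transport the resulting approximate fibration from the compact ENR back to $M$ using standard facts about Hilbert cube manifolds. First one verifies that $f$ satisfies the hypotheses of Corollary~\ref{intro:main_corollary}: the base $B$ is a finite simplicial complex, and the homotopy fibers of $f$ should have the homotopy type of finite complexes. Here one uses that a compact $Q$-manifold is homotopy equivalent to a finite complex and that $M$ is simply connected; I expect this verification to be the main subtle point of the argument. Granting it, Corollary~\ref{intro:main_corollary} yields a compact ENR $E$, a homotopy equivalence $\varphi\colon M\to E$, and an approximate fibration $p\colon E\to B$ with $p\circ\varphi=f$.

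Next I would pass to Hilbert cube manifolds. By a theorem of Edwards the product $E\times Q$ is a compact $Q$-manifold, since $E$ is a compact ANR. The inclusion $j\colon E\to E\times Q$, $e\mapsto (e,\ast)$, is a homotopy equivalence, so $j\circ\varphi\colon M\to E\times Q$ is a homotopy equivalence between compact $Q$-manifolds; since $M$, hence also $E\times Q$, is simply connected, the Whitehead torsion of $j\circ\varphi$ lives in a trivial group and so vanishes. By Chapman's theorem $j\circ\varphi$ is therefore homotopic to a homeomorphism $\Psi\colon M\to E\times Q$. Now set $q\colon M\to B$ to be the composite
\[
M \xrightarrow{\ \Psi\ } E\times Q \xrightarrow{\ \pr_E\ } E \xrightarrow{\ p\ } B .
\]
Since $\pr_E\circ j=\id_E$ and $\Psi\simeq j\circ\varphi$, we get $q\simeq p\circ\pr_E\circ j\circ\varphi = p\circ\varphi = f$, so $q$ is homotopic to $f$.

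It then remains to see that $q$ is an approximate fibration. The map $\pr_E\circ\Psi\colon M\to E$ is the composite of a homeomorphism with the projection off a product factor, hence is a Hurewicz fibration. Thus $q=p\circ(\pr_E\circ\Psi)$ is the composite of an approximate fibration with a Hurewicz fibration on the source side, and such a composite is again an approximate fibration: given a homotopy $h\colon X\times[0,1]\to B$, a lift $X\times\{0\}\to M$ of its initial stage, and an open cover $\calu$ of $B$, one first uses that $p$ is an approximate fibration to solve the approximate lifting problem in $E$, and then lifts the resulting homotopy in $E$ exactly to $M$ using the homotopy lifting property of $\pr_E\circ\Psi$; the composite of this exact lift with $p$ is then $\calu$-close to $h$. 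Beyond Corollary~\ref{intro:main_corollary}, the only external inputs are Edwards' and Chapman's theorems and the elementary closure property just described, so once the hypotheses of Corollary~\ref{intro:main_corollary} are checked the transport to $M$ is routine.
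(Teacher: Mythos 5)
Your transport argument reproduces the paper's (one-line) proof, correctly fleshed out: cross the compact ENR $E$ with $Q$ to get a compact $Q$-manifold (Edwards), observe that the Whitehead torsion of $j\circ\varphi$ vanishes because $\pi_1(M)=0$, invoke Chapman's theorem to replace it by a homeomorphism $\Psi$, and note that precomposing the approximate fibration $p$ with the Hurewicz fibration $\pr_E\circ\Psi$ is again an approximate fibration (your ``approximate lift in $E$, then exact lift to $M$'' argument is fine). All of this matches the paper, which compresses it into two sentences.

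The step you flag but defer --- checking that the homotopy fibers of $f$ have the homotopy type of finite complexes, so that Corollary~\ref{intro:main_corollary} applies --- is a genuine gap, and the route you propose (homotopy finiteness of $M$ plus simple connectivity of $M$) cannot close it as stated. If $M$ is connected and simply connected, then $\pi_0$ of the homotopy fiber of $f$ over $b$ is in bijection with $\pi_1(B,b)/f_*\pi_1(M)=\pi_1(B,b)$; so whenever $\pi_1(B)$ is infinite (e.g.\ the constant map $Q\to S^1$) the homotopy fiber has infinitely many components and is not homotopy equivalent to a finite complex, and the hypotheses of Corollary~\ref{intro:main_corollary} simply fail. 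So some hypothesis controlling $\pi_1(B)$ (or directly the homotopy finiteness of the fibers of $f$) is genuinely needed here, and even when $\pi_1(B)$ is finite one still has to address finite domination and the Wall obstruction for the homotopy fiber. To be fair, the paper's own proof is equally silent on this point and tacitly assumes the hypotheses of Corollary~\ref{intro:main_corollary}; you were right to single this out as the delicate step, but ``$M$ is a simply connected compact $Q$-manifold'' is not by itself enough to establish it.
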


In fact, factoring $f$ as $M\xrightarrow{\varphi} E \xrightarrow{q} B$ as in Corollary \ref{intro:main_corollary} and replacing $E$ by $E\times Q$, we obtain a factorization \eqref{intro:fundamental_factorization} where $E$ is a compact $Q$-manifold. Since $\varphi$ is a simple homotopy equivalence, $\varphi$ is homotopic to a homeomorphism \cite{Chapman(1974)}. 

In a somewhat different direction, Farrell, L\"uck and the author \cite{Farrell-Lueck-Steimle(2010)} have shown that if $p\colon E\to B$ is a fibration over a finite connected simplicial complex with finite fibers, then the total space of $p$ also comes with a preferred simple homotopy type, which is unique for example if the fibers are simply-connected. (In the general case this simple homotopy type depends on several choices, namely of a simple homotopy type of the fiber over some point $b$ and a ``spider'' in $B$, based at $b$.) It is natural to ask how the simple homotopy type of $E$ relates to that of $X$ in Theorem \ref{intro:main_theorem}.   

\begin{theorem}\label{intro:shtype}
If the fibers of $p$ are simply-connected, then $\varphi$ in Theorem \ref{intro:main_theorem} is a simple homotopy equivalence. In the general case, the choice of a simple homotopy type of the fiber over $b$ and the choice of a spider in $B$ at $b$ determine a factorization in Theorem \ref{intro:main_theorem} such that $\varphi$ is a simple homotopy equivalence. 
\end{theorem}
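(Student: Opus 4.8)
The plan is to equip $X$ with a preferred finite CW structure coming from the homotopy colimit presentation used in the proof of Theorem~\ref{intro:main_theorem}, to identify the resulting simple homotopy type with the canonical one of the compact ENR $X$, and then to compute the Whitehead torsion of $\varphi$ relative to the simple homotopy type of $E$ supplied by \cite{Farrell-Lueck-Steimle(2010)}. Recall that in that proof $X$ is exhibited as the homotopy colimit, over the poset $\calp$ of simplices of $B$, of a homotopy coherent diagram $\sigma\mapsto F_\sigma$ with values in finite CW complexes, each $F_\sigma$ a finite CW model of $p^{-1}(\sigma)$ (which is fiber homotopy equivalent to a fiber of $p$, hence of the homotopy type of a finite complex). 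The two-sided bar construction computing this homotopy colimit is the geometric realization of a simplicial space whose space of $n$-simplices is a disjoint union, over chains $\sigma_0\le\cdots\le\sigma_n$ in $\calp$, of products $F_{\sigma_0}\times\Delta^n$; its filtration by bar degree therefore gives $X$ a finite CW structure with cells indexed by pairs consisting of a cell of some $F_{\sigma_0}$ together with a strict chain $\sigma_0<\cdots<\sigma_n$ in $\calp$. Since the identity map between any two finite CW structures on a fixed space is a simple homotopy equivalence by topological invariance of Whitehead torsion \cite{Chapman(1974)}, the associated simple homotopy type of $X$ is well-defined and agrees with the canonical one of the compact ENR $X$. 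It thus suffices to show that $\tau(\varphi)\in\Wh(\pi_1 E)$ vanishes when $E$ carries the simple homotopy type of \cite{Farrell-Lueck-Steimle(2010)}.

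The next step is to recall that this latter simple homotopy type is assembled by an induction over the skeleta of $B$: over each simplex $\sigma$ the fibration $p$ is fiber homotopy trivial, so $E$ is reconstructed by successively gluing in copies of $F_\sigma\times\sigma$, the relevant finite CW structures being transported from a chosen finite model of the fiber $F_b$ along a chosen spider in $B$ based at $b$. This is the same combinatorial assembly as the bar construction above, and the comparison map $\varphi$ from the proof of Theorem~\ref{intro:main_theorem}, being itself built skeleton by skeleton, can be taken to respect the skeletal filtrations of $X$ and of $E$. The sum theorem for Whitehead torsion, applied to this filtration, then expresses $\tau(\varphi)$ as a sum $\sum_{\sigma}(-1)^{\dim\sigma}(\iota_\sigma)_*(\tau_\sigma)$ over the simplices $\sigma$ of $B$, where $\iota_\sigma$ is the inclusion of the fiber $F_\sigma$ into $E$ (read in fundamental groupoids, to accommodate disconnected fibers) and $\tau_\sigma\in\Wh(\pi_1 F_\sigma)$ is the torsion of the identity of $F_\sigma$ viewed as a map from $F_\sigma$ with its bar-construction CW structure to $F_\sigma$ with the CW structure transported from $F_b$ along the spider.

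In the simply-connected case this already finishes the proof: each component of each $F_\sigma$ has trivial fundamental group, so $\Wh(\pi_1 F_\sigma)=0$, every $\tau_\sigma$ vanishes, and $\tau(\varphi)=0$ for every choice made in the construction. It is worth noting that this argument is genuinely needed although $\Wh(\pi_1 E)$ itself need not be trivial; the point is precisely that all potential torsion originates fiberwise. In the general case one uses the remaining freedom: a finite CW structure on $F_b$ together with a spider in $B$ allows one to rigidify the homotopy coherent diagram defining $X$, namely by transporting the structure on $F_b$ to each $F_\sigma$ via the coherent transport maps (which model the fiber transport of $p$), and hence to single out a specific bar-construction model for $X$. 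Using the same data to build the simple homotopy type of $E$ makes $\varphi$ restrict to a simple homotopy equivalence on each piece, so that every $\tau_\sigma=0$ and $\tau(\varphi)=0$; conversely, this identifies the choices of \cite{Farrell-Lueck-Steimle(2010)} that make $\varphi$ simple.

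The main obstacle lies in the second step: making precise that the skeletal assembly of the simple homotopy type of $E$ in \cite{Farrell-Lueck-Steimle(2010)} and the bar-construction assembly of $X$ take place over a common decomposition of $B$, so that $\varphi$ is genuinely filtered; correctly identifying the fiberwise torsions $\tau_\sigma$; and deducing the displayed sum formula from the sum theorem for Whitehead torsion. One must keep track of the monodromy action of $\pi_1 B$ on the fibers so that the maps $(\iota_\sigma)_*$ are the correct ones, treat possibly disconnected fibers componentwise, and verify that the higher coherence homotopies of the homotopy coherent diagram contribute no torsion beyond the fiberwise terms $\tau_\sigma$; this last point is where the finiteness of the bar filtration and the product CW structures on the strata $F_{\sigma_0}\times\Delta^n$ are essential. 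Once the formula is established, matching the choice of fiber simple homotopy type and spider with the rigidification of the coherent diagram is a matter of comparing the definitions in \cite{Farrell-Lueck-Steimle(2010)} with those of the present paper.
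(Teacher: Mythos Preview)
Your outline shares its core idea with the paper's proof---both decompose the torsion simplex by simplex over $B$ and reduce to a fiberwise comparison---but the execution is different, and the gap you flag in your last paragraph is real and is precisely what the paper's machinery is designed to avoid.

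The paper does not try to filter $\varphi$. Instead it works with the zig-zag
\[
E \xleftarrow{\;h_E\;} \cyl(F) \xrightarrow{\;H_*\;} \cyl(G) \longrightarrow \cyl(rG),
\]
where $F(\sigma)=E_\sigma$, $G$ is the constant diagram with value a fixed ENR model $Z$ of the fiber (chosen via the given simple structure $\zeta$ and spider $s$, so that $H_\sigma$ is the trivialization along $\gamma_\sigma$ followed by projection and $f\colon F_b\to Z$), and $rG$ is the standard rectification. The claim is that this composite \emph{represents} the simple structure $\xi(b,\zeta,s)$ on $E$. Since $\varphi$ in Theorem~\ref{intro:main_theorem} is only defined up to homotopy (it is obtained by inverting $H_*$ and then correcting via the homotopy lifting property of $p$), there is no reason a generic representative respects any skeletal filtration of $E$; working with the zig-zag sidesteps this entirely.

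The induction on $\dim B$ is then run using the pushout decomposition of $\cyl$ from Lemma~\ref{lemma2}, which is available because the paper uses the Vogt-style model rather than the bar construction. Over each maximal simplex $m$, Lemma~\ref{lemma1} identifies $\cyl(rG\vert_{\bar m})$ and $\cyl(F\vert_{\bar m})$ with $rG(m)$ and $F(m)=E_m$ by mapping-cylinder collapses, which are simple; so the local contribution is exactly the torsion of $H_m\colon E_m\to Z$. This is matched against $\xi(b,\zeta,s)$ on $E_m$ using fiber homotopy invariance and compatibility with pullback from \cite{Farrell-Lueck-Steimle(2010)}, and the pieces are assembled via the additivity lemma there. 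That is your sum formula, but organized so that no global CW structure on $X$ is written down and no filtration of $\varphi$ is ever constructed. Your bar-construction route can in principle be completed, but the step ``$\varphi$ can be taken to respect the skeletal filtrations, hence the sum formula holds'' is exactly where Lemmas~\ref{lemma1} and~\ref{lemma2} are doing the work in the paper; without an analogue of those, that step remains an assertion.
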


To sketch the proof of Theorem \ref{intro:main_theorem}, assume for simplicity that $B$ is connected. Let $X$ be a finite simplicial complex with the same homotopy type as the fibers of $p$. The idea is to replace, for each simplex $\sigma$ of $B$, the space $E\vert_\sigma:=p\inv(\sigma)$ by $X\times \sigma$ and to glue these together using fiber transport to identify $X\times\sigma$ with, say, $X\times \tau$ over a common face of $\sigma$ and $\tau$. To make sure that the induced projection down to $B$ is indeed an approximate fibration, a deeper analysis of the homotopy coherences between the different fiber transports is needed. A systematic way to do that is to work with homotopy coherent diagrams from the beginning.

More concretely, given the fibration $p$, there is a diagram $F\colon \sigma\mapsto E\vert_\sigma$ indexed on the poset of simplices of $B$, where the maps $E\vert_\sigma\to E\vert_\tau$ for $\sigma<\tau$ are just the inclusions. Replacing $E\vert_\sigma$ by $X$ throughout, we obtain a diagram $G$ which takes values in compact ENRs; however it is no longer strictly commutative but only homotopy coherent. Yet we show that it can be rectified by a strict diagram $G'$ which still takes values in compact ENRs; the homotopy colimit of this diagram then is the space $X$ we are looking for. The fact that the projection down to $B$ is an approximate fibration may be seen as a generalization of Hatcher's iterated mapping cylinder argument \cite{Hatcher(1975)}.

The first part of this paper (sections \ref{section:hc_diagrams} to \ref{section:hc_transformations}) introduces our model for homotopy coherent diagrams and their homotopy colimits. Of course, since we are interested in geometric properties, we need concrete models rather than just models up to homotopy. Following Cordier \cite{Cordier(1982)}, we use the Dwyer-Kan resolution \cite{Dwyer-Kan(1980b)} to define homotopy coherent diagrams while the model for the homotopy colimit is in the spirit of the original definition of Vogt \cite{Vogt(1973)}, with an emphasis on universal properties rather than the ``concrete'' definition. Existence of the universal object is shown using a coequalizer construction. In section \ref{section:relation_to_BK} we show that this model of the homotopy colimit is homotopy equivalent to the Bousfield-Kan-like model \cite{Bousfield-Kan(1972)}. 

Section \ref{section:geometric_properties} starts to investigate geometric properties of the homotopy colimit over finite posets. The main result, Theorem \ref{intro:main_theorem} is proven in section \ref{section:main_result}; Theorem  \ref{intro:shtype} is proven in the following section \ref{section:simple_homotopy_type}.

\subsection*{Acknowledgements. }I am grateful to Bill Dwyer for helpful conversations. This work has been supported by the SFB 878 of the Deutsche Forschungsgemeinschaft and the Hausdorff Center for Mathematics.

\section{Homotopy coherent diagrams}\label{section:hc_diagrams}

The notion of a \hc diagram was introduced by Vogt \cite{Vogt(1973)}. We shall use the following definition, which is due to Cordier \cite{Cordier(1982)}.

Given a small category $\calc$, denote by $\calf\calc$ the \emph{free category} on $\calc$, where $\obj(\calf\calc)=\obj(\calc)$, and a morphism in $\calf\calc$ from $c$ to $d$ is a word of composable non-identity morphisms $(f_n,\dots,f_1)$ (i.e.~the range of $f_i$ is the domain of $f_{i+1}$) such that the domain of $f_1$ is $c$ and the range of $f_n$ is $d$. Composition is given by concatenation of words. The empty word $()$ is allowed if $c=d$ and takes the role of the identity morphism. 

For a morphism $f\in\calc(c,d)$, denote by $\calf f:=(f)\in\calf\calc(c,d)$ the word consisting of the single letter $f$. (If $f=\id$, then $\calf f=()$ is the empty word.) It is not hard to see that $\calf\calc$ is free on the morphisms $\calf f$ in the sense that any collection of maps
\[\mor_\calc(c,d)\to \mor_\cald(F(c),F(d)),\quad (c,d\in\obj(\calc))\]
which sends identities to identities extends uniquely to a functor $F\colon\calf\calc\to\cald$ which takes the prescribed values on the morphisms $\calf f$.

A given functor $F\colon\calc\to\cald$ induces a functor $\calf F\colon \calf\calc\to\calf\cald$ by sending $\calf f$ to $\calf (F(f))$. This makes $\calf$ to a functor from small categories to small categories.

The co-unit functor $U_\calc\colon \calf\calc\to\calc$ is the identity on objects and sends $\calf f$ to $f$. The co-multiplication functor $\Delta_\calc\colon \calf\calc\to \calf^2\calc$ sends $\calf f$ to $\calf^2 f$, thus the word consisting of the single letter $\calf f$. More precisely we obtain natural transformations
\[\calf^2 \xleftarrow{\Delta} \calf \xrightarrow{U} \id\]
which provide $\calf$ with the structure of a co-monad. Thus, letting $\calf_n\calc=\calf^{n+1}\calc$ we obtain a simplicial category (with discrete object set) $\calf_\bullet \calc$. Explicitly, the face and degeneracy functors are the identity on objects, while on morphism sets they are given by
\begin{align*}
d_i=\calf^i(U_{\calf^{n+1-i}\calc})& \colon \calf^{n+1}\calc(c,d)\to \calf^n\calc(c,d)\\
s_i=\calf^i(\Delta_{\calf^{n-i}\calc})& \colon \calf^{n+1}\calc(c,d)\to\calf^{n+2}\calc(c,d).
\end{align*}

The category $\calf_\bullet\calc$ was introduced by Dwyer-Kan \cite{Dwyer-Kan(1980b)}. We will, if necessary, refer to the composition in $\calf_\bullet\calc$ as ``formal composition'' in order to distinguish it from the composition law in $\calc$. The distinction between formal composition and actual composition is indeed the key ingredient in the consideration of homotopy coherence.

\begin{definition}
A \hc diagram of spaces of the shape $\calc$ is a simplicial functor
\[F\colon\calf_\bullet\calc\to\mathbf{Top}.\]
\end{definition}

Here we consider the category $\mathbf{Top}$ of (compactly generated) spaces as enriched over simplicial sets in the usual way. Thus a \hc diagram of the shape $\calc$ is given by
\begin{itemize}
\item For each object $c$ of $\calc$, a space $F(c)$, and
\item for each pair of morphisms, a map
\[\vert\calf_\bullet\calc(c,d)\vert\to\map(F(c), F(d))\]
satisfying the obvious naturality rule. 
\end{itemize}

\begin{example}\label{example:augmentation_functor}
There is a canonical ``augmentation'' functor $\varepsilon\colon\calf_\bullet\calc\to\calc$ (which takes a chain of composable morphisms to its composition). Thus any strict diagram of shape $\calc$, i.e.~any functor $\calc\to\mathbf{Top}$, induces a \hc diagram by precomposition with $varepsilon$.

The augmentation functor induces homotopy equivalences on morphism spaces. In fact, for $c,d\in\obj(\calc)$, the map
\[\calf_n\calc(c,d)\to\calf_{n+1}\calc(c,d)\]
sending a morphism $f$ to the morphism $\calf f$ is a contracting homotopy.
\end{example}

\begin{example}
A functor $f\colon \calc\to\cald$ induces a simplicial functor $\calf_\bullet f\colon \calf_\bullet\calc\to\calf_\bullet\cald$. Hence if we are given a \hc diagram $F$ of shape $\cald$, we may precompose to obtain a \hc diagram $f^* F$ of the shape $\calc$. Notice that if $f$ is full resp.~faithful, then so is $\calf_\bullet f$. In particular, if $f$ is the inclusion of a subcategory, we write $F\vert_\calc$ for $f^* F$.
\end{example}

\begin{example}
It is illuminating to calculate $\calf_\bullet\calc$ for  $\calc=[1]=(0<1)$ and $\calc=[2]=(0<1<2)$.
\begin{enumerate}
\item If $\calc=[1]=(0\xrightarrow{f} 1)$, then 
\[
\calf_0\calc(0,0)=\{\id\},\quad \calf_0\calc(1,1)=\{\id\},\quad
 \calf_0\calc(0,1)=\{\calf f\}
\]
and all the higher simplices are degenerate. Thus $\calf_\bullet\calc=\calc$, and a \hc diagram of shape $[1]$ is just a strict diagram of shape $[1]$, i.e.~a map $F(0)\to F(1)$ of spaces.
\item Now let $\calc=[2]$ as in the diagram
\[\xymatrix{
 & 1 \ar[rd]^g \\
0 \ar[ru]^f \ar[rr]^h && 2
}\]
where $h=g\circ f$. Now $\calf_\bullet\calc(0,2)$ is non-discrete, namely
\[\calf_0\calc(0,2)=\{(h), (g,f)\},\quad \calf_1\calc(0,2)=\{((h)), ((g),(f)), ((g,f))\}
\]
where the last element $((g,f))$ is non-degenerate. Its boundaries are given by the two elements $(g,f)=\calf g\circ\calf f$ and $(h)=\calf(g\circ f)$. All higher simplices are degenerate, so
\[\calf_\bullet\calc(0,2)\cong\Delta^1_\bullet\]
is the 1-simplex. 

Hence a \hc diagram of shape $[2]$ consists of three spaces $F(0)$, $F(1)$, $F(2)$, three maps $F(\calf f)\colon F(0)\to F(1)$, $F(\calf g)\colon F(1)\to F(2)$, and $F(\calf h){\colon}\linebreak[1] {F(0)}\to F(2)$, together with a homotopy
\[F(0)\times I\to F(2)\]
between $F(\calf g)\circ F(\calf f)$ and $F(\calf h)$.
\end{enumerate}
\end{example}

\begin{remark}
For any two objects $c,d$, the simplicial set $\calf_\bullet\calc(c,d)$ comes provided with an ``extra degeneracy'', given by the operator $\calf$. (This operator is not compatible with composition, though.) It satisfies the relations
\[d_0\circ\calf = \id, \quad s_0\circ\calf=\calf^2\]
and
\[d_i\circ\calf = \calf\circ d_{i-1},\quad s_i\circ\calf=\calf\circ s_{i-1}\quad (i>0)\]
where we use the convention that in simplicial degree 0, the map $d_0$ is given by $U$.
\end{remark}

\section{The homotopy colimit}\label{section:hocolim}

Given a small category $\calc$, denote by $\hat\calc$ the category obtained from $\calc$ by adding one terminal object, denoted by $*$. We refer to $\hat\calc$ as the cone of $\calc$.

In the following we are going to consider, for a given \hc diagram $F$ of shape $\calc$, extensions $G$ of $F$ to a \hc diagram of shape $\hat\calc$. 

If $G$ is such an extension, it is easy to see that any map $\varphi\colon G(*)\to Z$ to some space $Z$ induces, by composition, another extension $G'$ of $F$ such that $G'(*)=Z$. Thus we may consider the category where an object is an extension $G$ of $F$ to a \hc diagram of shape $\hat\calc$, and the set of morphisms from $G$ to $G'$ is given by the set of maps $\varphi\colon G(*)\to G'(*)$ such that $G'$ is induced from $G$ by $\varphi$. 

\begin{definition}
A homotopy colimit of $F$ is an initial object in this category.
\end{definition}

In other words, a homotopy colimit of $F$ is an extension $\hat F$ of $F$ to a \hc diagram of shape $\hat\calc$, where we denote $\cyl(F):=\hat F(*)$. It has to satisfy the following universal property: Given any other extension $G$ of $F$, there is a unique map $\cyl(F)\to G(*)$ such that $G$ is induced from $\hat F$ by $f$ in the way described above.

We proceed to show that such a homotopy colimit always exists. Let $\phi_\bullet$ be the contravariant simplicial functor on $\calf_\bullet\calc$, defined by
\[\phi_\bullet(c):=\calf_\bullet\hat\calc(c,*).\]
Then an extension $G$ of $F$ over $\hat\calc$, such that $G(*)=Z$, defines a natural transformation
\[\phi_\bullet(c)\to \map(F(c),Z).\]
Conversely any such natural transformation determines an extension $G$ of $F$ such that $G(*)=Z$.

\begin{proposition}
Let $\calc$ be any small category. Any \hc diagram $F$ of shape $\calc$ has a homotopy colimit.
\end{proposition}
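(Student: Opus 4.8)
The plan is to show that a homotopy colimit of $F$ is nothing but an object of $\mathbf{Top}$ carrying a suitable mapping-out property, and then to manufacture such an object by an explicit coequalizer; no finiteness of $\calc$ will be needed.

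First I would reformulate the universal property. As recorded just above, an extension $G$ of $F$ to shape $\hat\calc$ with $G(*)=Z$ is the same datum as a natural transformation $\phi_\bullet(c)\to\map(F(c),Z)$ of contravariant simplicial functors on $\calf_\bullet\calc$, and a morphism $G\to G'$ in the category of extensions is a map $\varphi\colon G(*)\to G'(*)$ whose post-composition carries one transformation to the other. Now a map of simplicial sets $K\to\map(X,Z)$ is the same as a continuous map $|K|\times X\to Z$; taking $K=\phi_\bullet(c)$ and $X=F(c)$, the transformation above becomes a family
\[
\lambda_c\colon |\phi_\bullet(c)|\times F(c)\longrightarrow Z
\]
natural in $c\in\calf_\bullet\calc$. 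Since $\calf_\bullet\calc$ has a discrete set of objects, the sole content of this naturality is that for every pair $c,c'$ the two maps $|\calf_\bullet\calc(c,c')|\times|\phi_\bullet(c')|\times F(c)\to Z$ agree: one obtained by first applying the $F$-action $|\calf_\bullet\calc(c,c')|\times F(c)\to F(c')$ and then $\lambda_{c'}$, the other by first applying the realized $\phi_\bullet$-coaction $|\calf_\bullet\calc(c,c')|\times|\phi_\bullet(c')|\to|\phi_\bullet(c)|$ and then $\lambda_c$. Hence an extension of $F$ with value $Z$ over $*$ is exactly a continuous map to $Z$ out of the coequalizer
\[
\cyl(F):=\coeq\Big(\coprod_{c,c'}|\calf_\bullet\calc(c,c')|\times|\phi_\bullet(c')|\times F(c)\;\rightrightarrows\;\coprod_{c}|\phi_\bullet(c)|\times F(c)\Big),
\]
with the two arrows the ones just described, and this correspondence is visibly natural in $Z$.

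This coequalizer exists because the category of compactly generated spaces is cocomplete; only the (small) set of objects and pairs of objects of $\calc$, together with finite products of spaces with realizations of simplicial sets, enter the construction. Equipping $\cyl(F)$ with the tautological family $\lambda_c$ coming from the structure map of the coequalizer, we obtain an extension $\hat F$ of $F$ over $\hat\calc$; and the naturality statement above says precisely that every extension $G$ of $F$ is induced from $\hat F$ by a unique map $\cyl(F)\to G(*)$. Thus $\hat F$ is initial in the category of extensions, i.e.\ a homotopy colimit of $F$, which proves the proposition.

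The one step that requires genuine care is the enriched bookkeeping in the first paragraph: verifying that ``naturality of $(\lambda_c)$ in $c\in\calf_\bullet\calc$'' unwinds exactly to the coequalizing condition. This uses that $F$ and $\phi_\bullet$ are \emph{simplicial} functors, so that the actions $\calf_\bullet\calc(c,c')\times F(c)\to F(c')$ and $\calf_\bullet\calc(c,c')\times\phi_\bullet(c')\to\phi_\bullet(c)$ are maps of the appropriate hom-objects, and that geometric realization is monoidal enough to turn these into the displayed maps of spaces. Everything else — the adjunction between $|K|\times X$ and $\map(X,Z)$, and cocompleteness of $\mathbf{Top}$ — is standard.
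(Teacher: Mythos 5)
Your proposal is correct and follows essentially the same route as the paper: both identify extensions of $F$ over $\hat\calc$ with natural transformations $\phi_\bullet(c)\to\map(F(c),Z)$, pass through the realization--mapping-space adjunction, and exhibit the homotopy colimit as the coequalizer $\coeq\bigl(\coprod_{c,d}\vert\phi_\bullet(d)\vert\times\vert\calf_\bullet\calc(c,d)\vert\times F(c)\rightrightarrows\coprod_c\vert\phi_\bullet(c)\vert\times F(c)\bigr)$. Your added remarks on the enriched naturality bookkeeping are a faithful elaboration of what the paper leaves implicit.
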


\begin{proof}
We saw above that an extension of $F$ over $\hat\calc$ corresponds to giving a natural transformation
\[\phi_\bullet(c)\to\map(F(c), Z).\]

By adjunction, such a natural transformation is given by a map
\[\coprod_{c\in\obj(\calc)} \vert \phi_\bullet(c)\vert\times F(c) \to Z\]
The naturality property translates to the fact that this map factors through the coequalizer 
\[\coeq\biggl(\coprod_{c,d\in\obj(\calc)} \vert\phi_\bullet(d)\vert\times\vert\calf_\bullet\calc(c,d)\vert\times F(c) \rightrightarrows\coprod_{c\in\obj(\calc)} \vert\phi_\bullet(c)\vert\times F(c)\biggr)\]
of the two obvious maps.

Thus if we denote by $\vert\phi_\bullet\vert \otimes_{\vert\calf_\bullet\calc\vert} F$ this coequalizer, then this space satisfies the universal property of $\cyl(F)$, since an extension of $F$ over $\hat\calc$ corresponds precisely to a map
\[\vert\phi_\bullet\vert \otimes_{\vert\calf_\bullet\calc\vert} F\to Z.\qedhere\]
\end{proof}

Given a functor $f\colon\cald\to\calc$ between small categories and  a \hc diagram $F$ of shape $\calc$, we saw that one may restrict $F$ to a \hc diagram $f^*F$ of shape $\cald$. So there is a canonical map
\[\bar f\colon \cyl(f^* F)\to\cyl(F).\]

In fact, denote by $\hat F$ the ``canonical'' (i.e.~initial) extension of $F$ to a \hc diagram of shape $\hat\calc$, such that $\hat F(*)=\cyl(F)$. Then the restriction of $\hat F$ to $\hat\cald$ is an extension of $F$. By the universal property of $\cyl(f^*F)$ this extension is induced by a unique map which we call $\bar f$.
It is not hard to see that if $g\colon \cale\to\cald$ is another functor, then the two maps
\[\bar f\circ \bar g, \overline{f\circ g}\colon \cyl(g^*f^*F)\to\cyl(F)\]
agree.

We conclude this section by a useful lemma which deals with the case when the underlying category has a terminal object. 

\begin{lemma}\label{lemma1}
Let $F$ be a \hc diagram of shape $\hat\calc$ for some small category $\calc$. Then $\cyl(F)$ is the mapping cylinder of the map
\[\cyl(F\vert_\calc)\to F(*)\]
which is induced by the universal property of the homotopy colimit of $F\vert_\calc$. In particular,
\[\cyl(F)\simeq F(*).\]
\end{lemma}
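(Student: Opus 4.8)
The plan is to identify $\cyl(F)$ by its universal property and match it with that of a mapping cylinder. By the Proposition above, applied to $F$ regarded as a \hc diagram of shape $\hat\calc$, and by the discussion preceding that Proposition, a map $\cyl(F)\to Z$ is the same datum as a natural transformation
\[\psi_\bullet(c)\longrightarrow\map(F(c),Z)\]
of contravariant functors on $\calf_\bullet\hat\calc$, where $\psi_\bullet(c):=\calf_\bullet\widehat{\hat\calc}(c,\star)$ and $\star$ denotes the cone point of $\widehat{\hat\calc}$. On the other hand the mapping cylinder of $u\colon\cyl(F\vert_\calc)\to F(*)$ is the pushout of $F(*)\xleftarrow{\,u\,}\cyl(F\vert_\calc)\hookrightarrow\cyl(F\vert_\calc)\times[0,1]$ (glued at the endpoint $1$), so a map from it to $Z$ is the same as a pair $(g,H)$ with $g\colon F(*)\to Z$ a map and $H\colon\cyl(F\vert_\calc)\times[0,1]\to Z$ a homotopy with $H_1=g\circ u$. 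It therefore suffices to produce a bijection, natural in $Z$, between the two kinds of data.

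The input I would need is a structural description of the functor $\psi_\bullet$. Write $\phi_\bullet(c):=\calf_\bullet\hat\calc(c,*)$ --- the functor used to build $\cyl(F\vert_\calc)$ --- and let $j\colon\phi_\bullet\to\psi_\bullet$ be the natural transformation over $\calf_\bullet\hat\calc$ given by formal postcomposition with the morphism $*\to\star$. I claim: (a) $\psi_\bullet(*)=\Delta^0$, and $j$ is an isomorphism at the object $*$; (b) over $\calf_\bullet\calc$, the transformation $j$ is a split monomorphism, and there is an isomorphism $\psi_\bullet\vert_\calc\cong\phi_\bullet\vert_\calc\times\Delta^1$, natural in $c\in\calf_\bullet\calc$ (hence with trivial $\calf_\bullet\calc$-action on the $\Delta^1$-factor), under which $j\vert_\calc$ becomes the inclusion at the endpoint $1$. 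These follow from inspecting a formal chain $c=c_0\to\dots\to c_k=\star$ of composable non-identity morphisms of $\widehat{\hat\calc}$: since $\star$ is terminal and $*\to\star$ is the only non-identity morphism out of $*$, either no $c_i$ equals $*$, so the chain lies in the full subcategory $\hat\calc$ (with cone point renamed to $\star$) --- this accounts for the slice $\phi_\bullet\vert_\calc\times\{0\}$ --- or $c_{k-1}=*$, in which case, beyond the chain $c\to\dots\to *$, the only remaining freedom is the position of the final arrow $*\to\star$ relative to the iterated bracketings, contributing exactly one interval factor. This is the relative version of the computation $\calf_\bullet[2](0,2)\cong\Delta^1$ carried out in the examples; the morphisms of $\calc$ act at the start of the chain, so they do not move the final arrow, which explains the trivial $\Delta^1$-action.

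Granting this, the two kinds of data match up as follows. Given a natural transformation $T\colon\psi_\bullet\to\map(F(-),Z)$, precomposing with $j$ yields a natural transformation $\phi_\bullet\to\map(F(-),Z)$ on $\calf_\bullet\hat\calc$; by the co-Yoneda identification $\phi_\bullet\otimes_{\calf_\bullet\hat\calc}F=F(*)$ this is a map $g\colon F(*)\to Z$, and, tracing the definition of $u$, the natural transformation $T\circ j$ restricted to $\calf_\bullet\calc$ corresponds to the map $\cyl(F\vert_\calc)\xrightarrow{\,u\,}F(*)\xrightarrow{\,g\,}Z$. Restricting $T$ itself to $\calf_\bullet\calc$ and applying the decomposition of (b) yields a natural transformation $\phi_\bullet\vert_\calc\times\Delta^1\to\map(F\vert_\calc(-),Z)$, which, because the $\Delta^1$-factor carries the trivial action, is exactly a map $\cyl(F\vert_\calc)\times[0,1]\to Z$, i.e.~a homotopy $H$ whose restriction to the endpoint $1$ is the previous map $g\circ u$. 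Conversely a pair $(g,H)$ with $H_1=g\circ u$ reassembles into such a $T$ --- on objects of $\calc$ via $H$ and (b), at $*$ via $g$ --- where the only naturality requiring checking is at the unique morphism $c\to*$, and there (a), (b) and the relation $H_1=g\circ u$ supply precisely the needed identity. These two assignments are mutually inverse and natural in $Z$, so $\cyl(F)$ is the mapping cylinder of $u$; and since a mapping cylinder deformation retracts onto its base, $\cyl(F)\simeq F(*)$.

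I expect the one real obstacle to be claim (b): making the decomposition $\psi_\bullet\vert_\calc\cong\phi_\bullet\vert_\calc\times\Delta^1$ precise and natural requires unwinding the iterated free-category construction applied to the ``double cone'' $\widehat{\hat\calc}$ and keeping careful track of the bracketings of chains of the form $c\to\dots\to *\to\star$. Everything after that is a routine manipulation with the coequalizer and adjunction formalism of the Proposition; indeed, one could instead feed claim (b) straight into the coequalizer presentation of $\cyl(F)$, splitting the indexing objects according to whether they equal $*$, whereupon the terms indexed by objects of $\calc$ reassemble $\cyl(F\vert_\calc)\times[0,1]$ and the terms involving $*$ glue $F(*)$ on along $\cyl(F\vert_\calc)\times\{1\}$ by $u$, giving the same conclusion.
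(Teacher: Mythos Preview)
Your proposal is correct and follows essentially the same approach as the paper. Your double cone $\widehat{\hat\calc}$ is the paper's $\check\calc$, your claims (a) and (b) are exactly the paper's claims (iii) and (ii), and the matching of universal properties via the $\Delta^1$-decomposition proceeds in the same way; the paper simply makes your claim (b) precise by writing down an explicit inductive formula for the isomorphism $\calf_\bullet\hat\calc(c,*)\times\Delta^1_\bullet\cong\calf_\bullet\check\calc(c,m)$, which is precisely the ``real obstacle'' you flag.
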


\begin{example}
If $F$ is a \hc diagram on $\calc=[n]$, then $\cyl(F)$ is an iterated mapping cylinder. These objects have been considered by Hatcher \cite{Hatcher(1975)} in the case where $F$ is a strict diagram. The iterated mapping cylinder comes with a canonical projection to $\Delta^n$; Hatcher showed that if $F$ takes all objects to PL spaces and all morphisms to cell-like mappings, then the projection down to $\Delta^n$ is a PL fibration. In a similar spirit, we will show that if $F$ maps all morphisms to homotopy equivalences, then the projection to $\Delta^n$ is an approximate fibration.
\end{example}

\begin{proof}[Proof of Lemma \ref{lemma1}]
Denote by $\check \calc$ the category $\hat\calc$ with another terminal object added which we denote by $m$. Let $t\colon *\to m$ the unique morphism in $\check\calc$. To prove Lemma \ref{lemma1}, we need to analyze the extensions of $F$ to a \hc diagram of shape $\check\calc$. Therefore we need to analyze $\calf_\bullet\check\calc$.

\emph{Claim. }We have
\begin{enumerate}
\item $\calf_\bullet\check\calc(c,d)\cong\calf_\bullet\hat\calc(c,d)$ if $c,d\neq m$.
\item $\calf_\bullet\check\calc(c,m)\cong\calf_\bullet\hat\calc(c,*)\times\Delta^1_\bullet$ if $c\neq *$. This isomorphism is natural in $c$ and is given by (formal) composition with $t$ when evaluated at $1\in\Delta^1_\bullet$.
\item $\calf_\bullet\check\calc(*,m)=\{\calf t\}.$
\end{enumerate}
Hence an extension $G$ of $F$ to $\check\calc$ amounts to the following data:
\begin{itemize}
\item A space $Z=G(m)$,
\item a map $\varphi\colon F(*)\to Z$ which is given by $G(\calf t)$, and
\item natural maps 
\[\psi_c\colon \calf_\bullet\hat\calc(c,*) \to\map(F(c), \map(I, Z))\]
(adjoint to $\calf_\bullet\check\calc(c,m)\to\map(F(c), Z)$) such that the following diagram commutes:
\[\xymatrix{
{\calf_\bullet\hat\calc(c,*)} \ar[rr]^F \ar[d]^{\psi_c}
 && {\map(F(c), F(*))} \ar[d]^{t} \\
{\map(F(c), \map(I, Z))} \ar[rr]^{\eval_1} && {\map(F(c), Z)}
}\]
\end{itemize}
Now notice that, since the maps $\psi_c$ are natural, they just define an extension $H$ of $F\vert_\calc$ over $\hat\calc$, such that $H(*)=\map(I,Z)$. Thus, using the universal property, we obtain a unique map
\[\cyl(F\vert_\calc)\to \map(I,Z)\]
which is adjoint to a map
\[\alpha\colon \cyl(F\vert_\calc)\times I\to Z.\]
Since the restriction along $1\in I$ factors through $F(*)$, the maps $\alpha$ and $\varphi$ define a unique map on the mapping cylinder of
\[\cyl(F\vert_\calc)\to F(*)\]
as claimed.

We still need to prove the claim. Part (i) holds as $\calf_\bullet\hat\calc$ is a full subcategory of $\calf_\bullet\check\calc$. Part (iii) follows from the fact that $t$ allows no non-trivial factorization in $\check\calc$. The only non-trivial part is thus (ii). We are going to write down an explicit isomorphism
\[\iota_\bullet\colon \calf_\bullet\hat\calc(c,*)\times\Delta^1_\bullet \to \calf_\bullet\check\calc(c,m).\]
On 0-simplices, the map $\iota_0$ is given by the following rule: 
\[\bigl((f_k,\dots,f_1), \alpha\colon [0]\to[1]\bigr)\mapsto 
 \begin{cases}
  (t,f_k,\dots, f_1), & \alpha=1,\\
  (t\circ f_k, f_{k-1},\dots, f_1), & \alpha=0.
 \end{cases}
\]
It is easy to see that this map is indeed a bijection, as $*$ is terminal in $\hat\calc$ and $m$ is terminal in $\check\calc$.

The map $\iota_n$ is described by the following inductive rule:
\[\bigl((f_k, \dots, f_1), \alpha\bigr)\mapsto
 \begin{cases}
  (\calf^n t,f_k,\dots, f_1), & \alpha\equiv 1,\\
  (\iota_{n-1}(f_k, d_0\alpha), f_{k-1}, \dots, f_1),& \mathrm{otherwise}.
 \end{cases}
\]
We show by induction that this is a bijection. Indeed it is clear that $\iota_n$ is a bijection between $\calf_n\hat\calc(c,*)\times \{1\}$ and those words in $\calf_n\check\calc(c,m)$ whose first letter is $\calf^n t$. On the other hand, $d_0$ is a bijection
\[d_0\colon \Delta^1_n-\{1\}\to \Delta^1_{n-1}\]
so the rule $(f_k,\alpha) \mapsto \iota_{n-1}(f_k, d_0\alpha)$ is a bijection 
\[\calf_{n-1}\hat\calc(\operatorname{domain}(f_k),*)\times \Delta^1_n-\{0\}\to \calf_{n-1}\check\calc(\operatorname{domain}(f_k),m)\]
by the inductive assumption.
\end{proof}

\begin{remark}
Here is an informal description of the map $\iota_n$: An element $x$ of $\calf_n\hat\calc(c,*)$ is a word of words of ... of morphisms of $\hat\calc$. Sticking to our convention that any word is surrounded by parentheses, such an element has thus the form
\[((\dots (a_1,a_2,\dots),(b_1,b_2,\dots),\dots )\]
where we have $(n+1)$ opening parentheses at the beginning. Now let $\alpha\colon [n]\to [1]$ be an $n$-simplex of $\Delta^1$. If $\alpha$ is constantly 0, then $\iota_n$ composes the very first morphism of $\hat\calc$ appearing in the word $x$ with the morphism $t$. Otherwise let $i$ be the first index such that $\alpha(i)=1$. Then $\iota_n$ introduces into $x$, after the $(i+1)$st opening parenthesis, the word given by $t$.
\end{remark}

\section{Relation to the homotopy colimit in the sense of Bousfield-Kan}\label{section:relation_to_BK}

If $F\colon \calc\to\mathbf{Top}$ is a strict diagram, then Bousfield-Kan \cite{Bousfield-Kan(1972)} defined its homotopy colimit to be
\[\hocolim(F):=\vert {-}\backslash\calc \vert\otimes_\calc F.\]
Here ${-}\backslash\calc$ is the category-valued functor on $\calc\op$ which sends $c$ to the under category $c\backslash\calc$; $\vert\cdot\vert$ denotes the geometric realization.

This definition readily generalizes to \hc diagrams: Let
\[\psi_n(c):=\coprod_{c_0,\dots,c_n\in\obj(\calc)} \calf_n\calc(c_{n-1},c_n)\times \dots\times \calf_n\calc(c_0,c_1)\times\calf_n\calc(c,c_0).\]
Then $\psi_\bullet(c)$ is in fact the diagonal of a bisimplicial set where the horizontal simplicial operations are the ones of $\calf_\bullet\calc$. The vertical differential $d_i$, for $i<n$ is given by the composition rule
\[\calf_\bullet\calc(c_i,c_{i+1})\times\calf_\bullet\calc(c_{i-1}, c_i)\to\calf_\bullet\calc(c_{i-1}, c_{i+1}).\]
(Here we let $c_{-1}:=c$.) The $n$-th vertical differential just lets the first factor out, and the degeneracies introduce identity morphisms.

Then $\vert\psi_\bullet\vert\colon\calf_\bullet\calc\op\to\mathbf{Top}$, so one may define
\[\Hocolim(F):=\vert\psi_\bullet\vert\otimes_{\vert\calf_\bullet\calc\vert} F.\]

The contraction $\varepsilon$ induces a natural transformation
\[\varepsilon_\bullet \colon \psi_\bullet(c)\to N_\bullet(c\backslash\calc)\]
so that if $F$ is a strict diagram, we obtain a natural homotopy equivalence
\[\varepsilon\colon \Hocolim(\varepsilon^*F)\to\hocolim(F).\]

\begin{theorem}\label{thm:relation_to_hocolim}
\begin{enumerate}
 \item For a \hc diagram $F$, there is a homotopy equivalence
\[\kappa\colon \Hocolim(F)\xrightarrow{\simeq}\cyl(F).\]
\item If $F$ is a strict diagram, then there is a homotopy equivalence 
\[\tau\colon \cyl(\varepsilon^*F)\xrightarrow{\simeq}\hocolim(F)\]
such that the following diagram commutes:
\[\xymatrix{
{\Hocolim(\varepsilon^*F)} \ar[rr]^\kappa_\simeq \ar[rd]_\varepsilon^\simeq && {\cyl(\varepsilon^*F)} \ar[ld]^\tau_\simeq \\
 & {\hocolim(F)} 
}\]
\end{enumerate}
\end{theorem}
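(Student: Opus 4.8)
The plan is to reduce both statements to a comparison of the ``weight'' functors on the Dwyer--Kan resolution, and then tensor with $F$. Recall from Section~\ref{section:hocolim} that $\cyl(F)=\vert\phi_\bullet\vert\otimes_{\vert\calf_\bullet\calc\vert}F$ with $\phi_\bullet(c)=\calf_\bullet\hat\calc(c,*)$, while by definition $\Hocolim(F)=\vert\psi_\bullet\vert\otimes_{\vert\calf_\bullet\calc\vert}F$. So a $\calf_\bullet\calc$-linear map $\psi_\bullet\to\phi_\bullet$ of contravariant simplicial functors which is an objectwise homotopy equivalence will induce, after tensoring with $F$, the desired map $\kappa\colon\Hocolim(F)\to\cyl(F)$. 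That the tensored map is again a homotopy equivalence follows from a cofibrancy argument: in each simplicial degree the weights $\phi_\bullet$, $\psi_\bullet$ are coproducts of morphism spaces of $\calf_\bullet\calc$ times discrete sets, so their geometric realizations are cofibrant as $\vert\calf_\bullet\calc\vert$-modules, and an objectwise equivalence between cofibrant modules is preserved by $(-)\otimes_{\vert\calf_\bullet\calc\vert}F$; for $F$ not objectwise cofibrant one first replaces it by an objectwise CW approximation, which changes neither homotopy colimit up to equivalence. Objectwise contractibility of the weights is already available: $\calf_\bullet\hat\calc(c,*)$ is contractible because the augmentation maps it by a homotopy equivalence onto $\hat\calc(c,*)$, a point, by Example~\ref{example:augmentation_functor}; and $\psi_\bullet(c)$ is contractible via the augmentation map $\varepsilon_\bullet$ to $N_\bullet(c\backslash\calc)$ recalled before the statement, $c\backslash\calc$ having the initial object $\id_c$.

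\emph{Part (i).} The real work is the construction of the $\calf_\bullet\calc$-linear map $\psi_\bullet\to\phi_\bullet$. An $n$-simplex of $\psi_\bullet(c)$ is a formal chain $c\xrightarrow{u_0}c_0\xrightarrow{u_1}\dots\xrightarrow{u_n}c_n$ in $\calf_n\calc$, and the naive idea is to send it to the formal composite $u_n\circ\dots\circ u_0$ post-composed with the canonical morphism $c_n\to*$. This fails to be simplicial on the last face: post-composing with the top-level single-letter word on $c_n\to*$ produces a word of length $\geq 2$, whereas the last face of $\phi_\bullet$ demands the single-letter word on $c_{n-1}\to*$. The remedy is to insert the morphism to $*$ not at the top of the iterated-word structure of $u_n$ but at a depth dictated by that structure -- exactly the kind of variable-depth insertion carried out by the map $\iota_\bullet$ in the proof of Lemma~\ref{lemma1}. (Alternatively one invokes the universal property of $\cyl(F)$ directly: an extension of $F$ to a \hc diagram on $\hat\calc$ with value $\Hocolim(F)$ at $*$ amounts to a $\calf_\bullet\calc$-linear map $\phi_\bullet\to\map(F(-),\Hocolim(F))$, which one writes down from the bar model of $\Hocolim(F)$ and then checks to be an equivalence as above.) Once the comparison map is in place, the cofibrancy argument of the previous paragraph produces $\kappa$.

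\emph{Part (ii).} Assume $F$ strict. The augmentation gives the $\calf_\bullet\calc$-linear objectwise equivalence $\varepsilon_\bullet\colon\psi_\bullet\to N_\bullet({-}\backslash\calc)$, inducing $\varepsilon\colon\Hocolim(\varepsilon^*F)\to\hocolim(F)$. To obtain $\tau$ with $\tau\circ\kappa=\varepsilon$ I would construct a $\calf_\bullet\calc$-linear map $\phi_\bullet\to N_\bullet({-}\backslash\calc)$ -- sending a word $c\to*$ in $\calf_\bullet\hat\calc$ to its underlying chain in $\calc$, obtained by applying the augmentation and discarding the (informationless) final arrow to the terminal object -- arranged so that the triangle of weight maps, namely $\psi_\bullet\to\phi_\bullet\to N_\bullet({-}\backslash\calc)$ against $\varepsilon_\bullet$, commutes on the nose. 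This map is again an objectwise equivalence between cofibrant weights, so tensoring with $F$ yields a homotopy equivalence $\tau\colon\cyl(\varepsilon^*F)\to\hocolim(F)$ making the required triangle commute strictly.

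The main obstacle is the bookkeeping in Part~(i): producing a comparison that is simultaneously simplicial in the Dwyer--Kan direction and $\calf_\bullet\calc$-linear, with the correct behaviour of the arrow to the terminal object under the last face -- the same combinatorial delicacy that made $\iota_\bullet$ nontrivial in Lemma~\ref{lemma1}. A lesser, routine point is the passage from objectwise equivalences of weights to equivalences after $(-)\otimes_{\vert\calf_\bullet\calc\vert}F$ for arbitrary $F$, handled by objectwise CW approximation together with the homotopy invariance of both homotopy-colimit constructions.
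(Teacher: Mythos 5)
Your outline identifies the right objects to compare, but the proof has a genuine gap exactly where you locate ``the main obstacle'': the $\calf_\bullet\calc$-natural simplicial map $\kappa_\bullet\colon\psi_\bullet\to\phi_\bullet$ is never constructed. This construction is the main content of the paper's proof. The paper defines it by the inductive formula $\kappa_0(f_0)=\calf t\circ f_0$ and $\kappa_n(f_n,\dots,f_0)=\calf\kappa_{n-1}(d_0f_n,\dots,d_0f_1)\circ f_0$, and then verifies the simplicial identities by an induction that is not automatic (note that the $d_0$'s and the operator $\calf$ interact nontrivially). You correctly diagnose why the naive ``compose everything and append $t$'' map fails on the last face and that a variable-depth insertion in the spirit of $\iota_\bullet$ from Lemma \ref{lemma1} is needed, but diagnosing the difficulty is not the same as resolving it; as written the proposal contains no comparison map to tensor with $F$. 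The same issue propagates to part (ii): the map $\phi_\bullet\to N_\bullet({-}\backslash\calc)$ cannot be ``the augmentation with the last arrow discarded'' (that would produce a single morphism, i.e.\ a $0$-simplex, not an $n$-chain); the paper's $\tau_\bullet$ has to decode the nested parenthesis structure level by level, and strict commutativity of the triangle can only be checked against the actual formula for $\kappa_\bullet$.

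There is a second, lesser issue with the step you call routine. The paper does not use a cofibrancy argument at all: it produces an explicit section $\lambda_\bullet$ with $\kappa_\bullet\circ\lambda_\bullet=\id$ and an explicit simplicial homotopy $H_\bullet$ from $\lambda_\bullet\circ\kappa_\bullet$ to the identity, \emph{all natural in $c$}. Because the homotopy inverse and the homotopy are themselves $\calf_\bullet\calc$-natural, they descend through the coequalizer defining $\otimes_{\vert\calf_\bullet\calc\vert}F$ for an arbitrary diagram $F$, yielding a genuine homotopy equivalence with no hypotheses on $F$ and no model-category input. Your route requires (a) projective cofibrancy of $\phi_\bullet$, for which the observation that each $\phi_n$ is a coproduct of representables is necessary but not sufficient (one must also control the degeneracies, a nontrivial check comparable to Corollary \ref{cor:standard_rectification_is_cofibrant}), and (b) a CW approximation of $F$, which at best preserves weak homotopy type; since $\cyl(F)$ need not have CW type, this only yields a weak homotopy equivalence, whereas the theorem asserts a homotopy equivalence. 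So even granting the comparison map, the proposed argument proves a weaker statement than the one claimed.
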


The remainder of this section is devoted to the proof of this Theorem. The natural transformation in (i) is induced by simplicial a map
\[\kappa_\bullet \colon \psi_\bullet(c)\to\phi_\bullet(c)\]
which is natural in $c$. 

On 0-simplices, $\kappa_0(f_0):=\calf t\circ f_0$. Here $f_0\in\calf_0\calc(c,c_0)$, and $t\colon c_0\to*$ is the unique morphism in $\calc$. The composition $\calf t\circ f_0$ is, of course, the (formal) composition in $\calf_0\calc$, thus given by concatenation of words.

For $n\geq 1$, we define inductively
\[\kappa_n(f_n,\dots f_0):=\calf\kappa_{n-1}(d_0 f_n,\dots, d_0 f_1)\circ f_0.\]
This is certainly natural in $c$, i.e.~
\[\kappa_n(f_n,\dots, f_0\circ g)=\kappa_n(f_n,\dots, f_0)\circ g.\]

We have to check that the simplicial identities are satisfied. For example, remembering that the simplicial operations in $\psi_{\bullet}$ are the diagonal ones. Moreover, one can check by hand that
\begin{align*}
d_0\kappa_n(f_n,\dots,f_0) & = \kappa_{n-1}(d_0f_n,\dots d_0 f_1)\circ d_0 f_0\\
 & =\kappa_{n-1}(d_0f_n,\dots d_0f_1, d_0(f_1\circ f_0))\\
 & =\kappa_{n-1}\circ d_0(f_n,\dots f_0),
\end{align*}
\[d_1\circ \kappa_1= \kappa_0\circ d_1\]
and by induction on $n$ that
\begin{align*}
d_i\circ\kappa_n(f_n,\dots f_0) & = d_i\calf\kappa_{n-1}(d_0f_n,\dots d_0f_1)\circ d_i f_0\\
& = \calf d_{i-1}\kappa_{n-1}(d_0f_n,\dots, d_0 f_1)\circ d_i f_0\\
 & =\calf\kappa_{n-2} d_{i-1}(d_0 f_n,\dots, d_0f_1)\circ d_i f_0\\
 & = \calf\kappa_{n-2}(d_0 d_i f_n,\dots,d_0 d_i f_{i+1}\circ d_0 d_i f_i,\dots, d_0 d_i f_1)\circ d_i f_0\\ 
 & =\kappa_{n-1}\circ d_i(f_n,\dots, f_0)
\end{align*}
for $i>0$. The calculation for the degeneracy operators is similar.

Theorem \ref{thm:relation_to_hocolim} follows from the Lemma down below after geometric realization.

\begin{lemma}
The map $\kappa_\bullet$ is a homotopy equivalence of simplicial functors $\calf_\bullet\calc\to\mathbf{sSet}$. More precisely, there is a simplicial natural tranformation
\[\lambda_\bullet\colon \phi_\bullet(c)\to\psi_\bullet(c)\]
such that $\kappa_\bullet\circ\lambda_\bullet=\id$, and a simplicial homotopy
\[H_\bullet\colon\psi_\bullet(c)\times \Delta^1_\bullet\to\psi_\bullet(c)\]
from $\lambda_\bullet\circ\kappa_\bullet$ to the identity, which is also natural in $c$.
\end{lemma}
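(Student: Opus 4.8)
The plan is to construct the section $\lambda_\bullet$ and the homotopy $H_\bullet$ by explicit recursions, parallel to the one defining $\kappa_\bullet$, and then to verify the simplicial identities and endpoint conditions by induction on the simplicial degree.

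First I would observe that $\kappa_0$ is in fact a \emph{bijection}: since $*$ is terminal in $\hat\calc$, every word in $\calf_0\hat\calc(c,*)$ has the unique morphism $c'\to *$ as its outermost (last-applied) letter, and there is no other way to reach $*$ inside $\hat\calc$, so it factors uniquely as $\calf t\circ f_0$ with $f_0\in\coprod_{c'}\calf_0\calc(c,c')=\psi_0(c)$. Hence I set $\lambda_0:=\kappa_0\inv$. For $n\geq 1$ I would define $\lambda_n$ by reversing the defining clause $\kappa_n(f_n,\dots,f_0)=\calf\kappa_{n-1}(d_0f_n,\dots,d_0f_1)\circ f_0$: given $x\in\calf_n\hat\calc(c,*)$, strip off the layer that reaches $*$ by writing $x=\calf(y)\circ f_0$ with $f_0\in\calf_n\calc(c,c_0)$ chosen maximal, apply $\lambda_{n-1}$ to the appropriate $d_0$-image of the remaining data, and re-nest the outcome, using the canonical ``singleton'' grouping of a word wherever a grouping must be chosen. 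Then one checks, by induction on $n$ and using the simplicial identities already established for $\kappa_\bullet$, that $\lambda_\bullet$ is simplicial and natural in $c$ (precomposition in $c$ touches only the innermost factor $f_0$), and, by unwinding the two recursions, that $\kappa_\bullet\circ\lambda_\bullet=\id$.

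Next I would analyze the composite $\lambda_\bullet\circ\kappa_\bullet$. It leaves all underlying morphisms of $\calc$ untouched; it merely re-nests the factors of an element of $\psi_\bullet(c)$ into their canonical singleton form (and composes away the direction towards $*$). Thus $\lambda\kappa$ and $\id$ differ only through a ``change of nesting'' that takes place inside the simplicial sets $\calf_\bullet\calc(c_i,c_{i+1})$, each of which carries the extra degeneracy $\calf$ from Section~\ref{section:hc_diagrams} and is therefore canonically contractible. I would assemble these contractions factorwise, transport them through the section $\lambda_\bullet$, and organize the result into a simplicial map $H_\bullet\colon\psi_\bullet(c)\times\Delta^1_\bullet\to\psi_\bullet(c)$ from $\lambda\kappa$ to $\id$ — again most transparently by a recursion on $n$ mirroring those for $\kappa_\bullet$ and $\lambda_\bullet$. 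Naturality in $c$ is then automatic, since every ingredient used (formal composition, $d_0$, and $\calf$) is natural in $c$.

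The hard part will be the homotopy $H_\bullet$: everything else is bookkeeping, but for $H_\bullet$ one must actually write down the maps $\psi_n(c)\to\psi_{n+1}(c)$ that realize it and verify the prism identities relating them to the face and degeneracy operators of $\psi_\bullet(c)$, together with the two boundary conditions $H\vert_0=\lambda\kappa$ and $H\vert_1=\id$. The nuisance is that $\psi_\bullet(c)$ is the diagonal of a bisimplicial set, so each of its simplicial operators blends a ``nerve'' face with an ``$\calf_\bullet$'' face; the two families of identities are genuinely entangled, and keeping the induction honest — in particular ensuring that the singleton-regrouping homotopies on adjacent factors stay compatible under the composition maps $d_i$ — is where the real care is required.
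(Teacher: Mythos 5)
Your plan coincides with the paper's proof in strategy: define $\lambda_\bullet$ by an explicit recursion (the paper defines it on length-one words, $\lambda_0(\calf f)=\id$ and $\lambda_n(\calf f)=(\calf\lambda_{n-1}(f),\id)$, and extends by naturality in $c$, which is exactly your ``precomposition only touches $f_0$'' remark), verify $\kappa_\bullet\circ\lambda_\bullet=\id$ by induction, identify $\lambda_n\circ\kappa_n$ as the singleton re-nesting operator --- the paper computes $\lambda_n\kappa_n(f_n,\dots,f_0)=(x_nf_n,\dots,x_0f_0)$ with $x_j=\calf^jd_0^j$, which is precisely the description you give --- and build the homotopy to the identity out of the extra degeneracy $\calf$. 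Your observation that $\kappa_0$ is a bijection is correct. One caution on $\lambda$: no face map enters its recursion; the inner datum $f\in\calf_{n-1}\hat\calc(c',*)$ is fed to $\lambda_{n-1}$ directly, and inserting a $d_0$ (as your phrase ``the appropriate $d_0$-image'' suggests) would not even typecheck and would destroy the section property. The $d_0$'s belong to $\kappa$ and to the composite $\lambda\kappa$, not to $\lambda$.

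The genuine gap is the one you flag yourself: $H_\bullet$ is never constructed, and this is the only non-formal content of the lemma. ``Assembling the contractions factorwise'' does not by itself yield a simplicial map on the diagonal $\psi_\bullet(c)$; the missing idea is how the $\Delta^1$-coordinate truncates the factorwise contractions. The paper's formula is $H_n(f_n,\dots,f_0,\alpha)=(x_{n\wedge k(\alpha)}f_n,\dots,x_{0\wedge k(\alpha)}f_0)$ with $k(\alpha)=\min\{i:\alpha(i)=1\}$. The exponent on the $j$-th factor must depend both on the position $j$ (else the $\alpha\equiv 0$ endpoint cannot be $\lambda\kappa$, whose $j$-th entry is $x_jf_j$) and on $\alpha$ through the infimum $j\wedge k(\alpha)$; the latter is what makes $H$ commute with the composition faces $d_i$, which merge adjacent factors, via the relations $d_ix_j=x_{j-1}d_i$ for $i<j$, $d_ix_j=x_jd_i$ for $i\geq j$, and $k(d_i\alpha)\in\{k(\alpha),k(\alpha)-1\}$. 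Until this formula (or an equivalent one) is written down and the identities checked, the lemma is not proved; the rest of your plan is, as you say, bookkeeping.
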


\begin{proof}
To define a natural map 
\[\lambda_n\colon\calf_n\hat\calc(c,*)\to \coprod_{c_0,\dots,c_n} \calf_n\calc(c_{n-1},c_n)\times\dots\times\calf_n\calc(c,c_0),\]
it suffices to define it on words of the length one, i.e.~on elements of the form $\calf f$.

The maps $\lambda_n$ are defined inductively by
\[\lambda_0(\calf f)=\id, \quad\lambda_n(\calf f)=(\calf \lambda_{n-1}(f), \id).\]
here the last expression denotes the $(n+1)$-tuple whose last component is the identity map, and the first $n$ components are obtained from $\lambda_{n-1}(f)$ by applying $\calf$ component-wise.

Now $\kappa_0\circ\lambda_0$ is obviously the identity. Inductively one concludes
\[\kappa_n\circ\lambda_n(\calf f)  = \kappa_n(\calf\lambda_{n-1}(f), \id) = \calf\kappa_{n-1}(\lambda_{n-1}(f))\circ\id = \calf f,\]
hence $\kappa_n\circ\lambda_n=\id$. Another inductive argument shows that
\[\lambda_n\circ\kappa_n(f_n,\dots, f_0)=(x_n f_n,\dots,x_0 f_0)\]
where we abbreviate
\[x_j:=\calf^j d_0^j.\]

To show that $\lambda_\bullet$ is simplicial, one could argue inductively again as in the case of $\kappa_\bullet$. But as $\kappa_n\circ\lambda_n=\id$, the simplicial identities hold for $\lambda_\bullet$ provided they hold for $\kappa_\bullet$ and $\lambda_\bullet\circ\kappa_\bullet$; so they are a consequence of the fact that the map $H_\bullet$ defined below is simplicial.

In fact, $H_\bullet$ will be a simplicial homotopy between $\lambda_\bullet\circ\kappa_\bullet$ and the identity on $\psi_\bullet(c)$. It is defined by the formula
\begin{align*}
 H_n\colon \psi_n(c)\times\Delta^1_n & \to \psi_n(c)\\
(f_n,\dots,f_0,\alpha)& \mapsto (x_{n\wedge k(\alpha)} f_n, \dots, x_{0\wedge k(\alpha)}f_0)
\end{align*}
where the sign $\wedge$ denotes the infimum and
\[k(\alpha)= \min\{i;\;\alpha(i)=1\}\in\{0,\dots,n+1\}\quad(\alpha\colon [n]\to[1]).\]

The fact that $H_\bullet$ is simplicial follows from the formulae 
\[k(d_i\alpha)=
 \begin{cases}
   k(\alpha),&i\geq k(\alpha),\\
   k(\alpha)-1, & i<k(\alpha)
 \end{cases},
\quad
k(s_i\alpha)=
 \begin{cases}
   k(\alpha),&i\geq k(\alpha),\\
   k(\alpha)+1, & i<k(\alpha)
 \end{cases},
\]
and
\[
d_i x_j =
\begin{cases}
 x_{j-1}d_i, & i<j,\\
 x_j d_i, & i\geq j
\end{cases},
\quad 
s_i x_j =
\begin{cases}
 x_{j+1}s_i, & i<j,\\
 x_j s_i, & i\geq j
\end{cases},
\]
For example,
\[H_{n-1}\circ d_i(f_n,\dots,f_0,\alpha)=d_i\circ H_n(f_n\dots, f_0,\alpha)\]
as both are given by $(g_{n-1},\dots, g_0)$ where
\[g_j=
 \begin{cases}
  x_{j\wedge k(\alpha)} d_i f_j, & j<i,\\
  x_{j\wedge k(\alpha)} d_i(f_{j+1}\circ f_j), & j=i,\\
  x_{k(\alpha)} d_i f_{j+1}, & j>i\geq k(\alpha),\\
  x_{j\wedge(k(\alpha)-1)} d_i f_{j+1}, & j>i, k(\alpha)>i.
 \end{cases}
\]
The case of degeneracies is similar.
\end{proof}

Next we construct a natural transformation
\[\tau_\bullet\colon\phi_\bullet(c)\to N_\bullet(c\backslash\calc)\]
such that the following triangle commutes:
\[\xymatrix{
{\psi_{\bullet}(c)} \ar[rr]^{\kappa_\bullet} \ar[rd]_{\varepsilon_\bullet} && {\phi_\bullet(c)} \ar[ld]^{\tau_\bullet}\\
& {N_\bullet(c\backslash\calc)}
}\]

To do this, to each morphism $(f_n,\dots,f_1)$ in $\hat\calf_n\calc(c,*)$ we need to give a string
\[c\xrightarrow{\alpha_0}c_0\xrightarrow{\alpha_1}\dots \xrightarrow{\alpha_n} c_n\]
in $\calc$. The rule is defined as follows: Let $\alpha_0:=\varepsilon(f_{n-1},\dots f_1)$, which is  a morphism from $c$ to the domain of $f_n$. Now, by definition, $f_n=(g_m,\dots,g_1)$ is itself a word of morphisms in $\calf_{n-2}\hat\calc$, and we let $\alpha_1:=\varepsilon(g_{m-1},\dots,g_1)$. Iterating this procedure, one obtains maps $\alpha_2,\dots\alpha_n$ as claimed. It is not hard to see that the diagram commutes.

\section{Homotopy coherent transformations and rectifications}\label{section:hc_transformations}

The next task will be to show that a ``\hc transformation'' between \hc diagrams induces, up to homotopy, a unique map between homotopy colimits.

Denote, for $i\leq n$, by $\calc_i\subset\calc\times [n]$ the full subcategory of objects $(c,i)$, which of course is isomorphic to $\calc$. Denote moreover, for $i<j$, by $\calc_{i\dots j}\subset\calc\times [n]$ the full subcategory of all objects of $\calc_k$, where $i\leq k\leq j$. 

\begin{definition}[Vogt, Cordier]
\begin{enumerate}
\item Let $F,G$ be \hc diagrams of shape $\calc$. A \hc transformation from $F$ to $G$ 
\[H\colon F\Rightarrow G\]
is a \hc diagram of shape $\calc\times [1]$ which restricts to $F$ over $\calc_0$ and to $G$ over $\calc_1$.
\item Let 
\[\xymatrix{
 & F_1 \ar@{=>}^{H_0}[rd] \\
F_0 \ar@{=>}^{H_2}[ru] \ar@{=>}^{H_1}[rr] && F_2
}\]
be \hc transformations. We write
\[H_1\simeq H_2\circ H_0\]
if there is a \hc diagram $H$ of shape $\calc\times [2]$ such that $\partial_i H= H_i$ for $i=0,1,2$.
\item Two \hc transformations $H,H'\colon F\Rightarrow G$ are homotopic if $H'\simeq H\circ\id_G$. Here $\id_G$ denotes the restriction of $G$ along the projection $\calc\times [1]\to\calc$.
\item The category $\Coh(\calc,\mathbf{Top})$ has as objects the \hc diagrams of shape $\calc$, and homotopy classes of \hc transformations as morphisms. The composite $[H_2]\circ[H_0]$ is represented by any $H_1$ such that $H_1\simeq H_2\circ H_0$.
\end{enumerate}
\end{definition}

\begin{remark}
One can show that homotopy between \hc transformations is indeed an equivalence relation, and that composition in $\Coh(\calc,\mathbf{Top})$ is well-defined. This relies the fact that the homotopy coherent nerve of the category $\mathbf{Top}$ is a weakly Kan.
\end{remark}

Let $H\colon F\Rightarrow G$ be a \hc transformation. By naturality with respect to the index category, there are two inclusions
\[\cyl(F) \xrightarrow{i_1} \cyl(H) \xleftarrow{i_2} \cyl(G).\]
The map $i_2$ is a homotopy equivalence by the following Lemma:

\begin{lemma}\label{lemma3}
Let $F$ be a \hc diagram of the shape $\calc\times [n]$. Then for any $i\leq n$ the inclusion
\[\cyl(F\vert_{\calc_{i\dots n}})\to \cyl(F)\]
is a homotopy equivalence.
\end{lemma}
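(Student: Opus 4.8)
\textbf{Proof proposal for Lemma \ref{lemma3}.}

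The plan is to reduce the statement to the case $i = n-1$ and then recognize that single inductive step as an instance of Lemma \ref{lemma1}. First I would observe that $\calc \times [n]$ has a filtration by the full subcategories $\calc_{i \dots n}$, and that it suffices to treat one step of the filtration: if I can show that the inclusion $\cyl(F\vert_{\calc_{i+1 \dots n}}) \to \cyl(F\vert_{\calc_{i \dots n}})$ is a homotopy equivalence for each $i < n$, then composing these (using the compatibility $\bar f \circ \bar g = \overline{f \circ g}$ noted after the existence proposition) gives the claim. So I may assume $i = n-1$ and must show $\cyl(F\vert_{\calc_{n-1,n}}) \to \cyl(F\vert_{\calc \times [n]})$ — wait, after the reduction the relevant statement is just for a \hc diagram on $\calc \times [n]$ with the inclusion of $\calc_{n-1 \dots n} \cong \calc \times [1]$; iterating, I really only need: for a \hc diagram $F$ on $\calc \times [1]$, the inclusion $\cyl(F\vert_{\calc_1}) \to \cyl(F)$ is a homotopy equivalence.

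The key idea for this base case is that the category $\calc \times [1]$ has $\calc_1$ as a ``terminal slice'': more precisely, I want to apply Lemma \ref{lemma1}, which handles a \hc diagram whose shape is a cone $\hat\cald$. The subtlety is that $\calc \times [1]$ is not literally $\widehat{\calc_0}$ (the object $1 \in [1]$ is not terminal in $\calc \times [1]$ unless $\calc$ is trivial). However, $\calc_1 \subset \calc \times [1]$ is a full subcategory isomorphic to $\calc$, and every object of $\calc_0$ maps into $\calc_1$. The right framework is a relative version of Lemma \ref{lemma1}: one shows $\cyl(F)$ is built from $\cyl(F\vert_{\calc_0})$, $\cyl(F\vert_{\calc_1})$ and a mapping-cylinder-like gluing, with $\cyl(F\vert_{\calc_1}) \to \cyl(F)$ the cofibration inclusion into a deformation retract. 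Concretely, I would analyze $\calf_\bullet\widehat{(\calc \times [1])}(c, *)$ for $c \in \calc_0$ versus $c \in \calc_1$, exactly as in the proof of Lemma \ref{lemma1}: for $c \in \calc_1$ the relevant simplicial sets already compute $\cyl(F\vert_{\calc_1})$, while for $c \in \calc_0$ there is a product decomposition with a $\Delta^1$-factor recording the (unique) morphism $(c,0) \to (c,1)$. This should exhibit $\cyl(F)$ as $\cyl(F\vert_{\calc_1})$ with a mapping cylinder of $\cyl(F\vert_{\calc_0}) \to \cyl(F\vert_{\calc_1})$ glued on along $\cyl(F\vert_{\calc_0})$, whence the inclusion of $\cyl(F\vert_{\calc_1})$ is a deformation retract.

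Alternatively — and this may be cleaner — I would factor the inclusion through $\cyl(F)$ where $F$ is restricted to the full subcategory spanned by $\calc_1$ together with a single extra terminal object; but the most economical route is probably to invoke Lemma \ref{lemma1} applied to the \hc diagram on $\widehat{\calc \times [n]}$ after collapsing: view $\cyl(F\vert_{\calc_{i \dots n}})$ itself as $\hat G(*)$ for the canonical extension $\hat G$ of $F\vert_{\calc_{i \dots n}}$, extend over $\widehat{\calc_{i \dots n}}$, and compare. The main obstacle I anticipate is purely combinatorial: getting the naturality of the $\Delta^1$-decomposition of $\calf_\bullet$ right in the presence of the index $\calc$, i.e.\ checking that the product isomorphism analogous to part (ii) of the Claim in Lemma \ref{lemma1} is compatible with all formal compositions coming from morphisms of $\calc$ as well as the unique $[0]\to[1]$ morphisms. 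Once that bookkeeping is in place, the homotopy-equivalence statement is formal, being a mapping-cylinder deformation retraction, and the passage from $i=n-1$ to general $i$ is the trivial composition argument above.
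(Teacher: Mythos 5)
Your reduction step does not work as stated. Each step of the filtration $\calc_{i\dots n}\subset\calc_{i-1\dots n}\subset\dots\subset\calc\times[n]$ is an inclusion of the form $\calc_{1\dots m}\hookrightarrow\calc\times[m]$ with $m$ varying, and $\calc\times[m]$ is not of the form $\cald\times[1]$ with $\cald_1\cong\calc_{1\dots m}$ (that would force $\calc\cong\calc\times[m-1]$). So composing one-step inclusions does not bring you down to the two-slice situation; the ``base case'' you would actually need is the full strength of the lemma for $i=1$ and arbitrary $n$, which is essentially the original statement.

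More seriously, the combinatorial heart of your base case fails. The product decomposition in part (ii) of the Claim in the proof of Lemma \ref{lemma1} depends on the fact that $t\colon *\to m$ admits \emph{no nontrivial factorization} in $\check\calc$: every word from $c$ to $m$ contains exactly one letter involving $t$, and the $\Delta^1_\bullet$-coordinate only records the depth of that letter in the parenthesization. In $\widehat{\calc\times[1]}$ a morphism $(c,0)\to *$ need not factor through $(c,1)$ at all, and the level-crossing morphisms $(c,0)\to(d,1)$ factor through arbitrary intermediate objects in either slice; consequently $\calf_\bullet\widehat{\calc\times[1]}((c,0),*)$ is \emph{not} a product of $\calf_\bullet\widehat{\calc_1}((c,1),*)$-type data with $\Delta^1_\bullet$, and the mapping-cylinder structure you want to exhibit is not accessible by imitating Lemma \ref{lemma1}. (There is also a circularity lurking: the map $\cyl(F\vert_{\calc_0})\to\cyl(F\vert_{\calc_1})$ whose mapping cylinder you propose to form is defined in the paper as $i_2\inv\circ i_1$, which presupposes the very statement being proved; you would have to construct it independently, e.g.\ via rectification.) The paper avoids all of this: the inclusion $f\colon\calc_{i\dots n}\hookrightarrow\calc\times[n]$ has the property that every comma category $c/f$ has an initial object (namely $(x,\max(i,j))$ for $c=(x,j)$), hence a contractible nerve, so the homotopy coherent cofinality theorem (Lemma \ref{lem:cofinality}) gives the equivalence on $\Hocolim$, and Theorem \ref{thm:relation_to_hocolim} transfers it to $\cyl$. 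If you want a hands-on argument, you should aim at a homotopy-coherent Fubini/cofinality statement rather than at a literal product decomposition of the morphism complexes.
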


\begin{proposition}
The homotopy colimit defines a functor
\[\cyl\colon \Coh(\calc,\mathbf{Top})\to \Ho(\mathbf{Top})\]
by the following rule: If $H\colon F\Rightarrow G$ is a \hc transformation, then 
\[H_*\colon\cyl(F)\to \cyl(G)\]
is the composite $i_2\inv\circ i_1$.
\end{proposition}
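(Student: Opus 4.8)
The plan is to reduce everything---well-definedness of $\cyl$ on morphisms, and its functoriality---to a single computation carried out inside $\cyl(L)$ for a \hc diagram $L$ of shape $\calc\times[2]$. The only inputs will be Lemma~\ref{lemma3} (which identifies the homotopy equivalences among the evident inclusions into such a homotopy colimit) and the two formal properties of the maps $\bar{(\cdot)}$ from Section~\ref{section:hocolim}: the composition law $\overline{g\circ h}=\bar g\circ\bar h$, and $\overline{\id_\calc}=\id_{\cyl(F)}$ (immediate from the universal property, since restricting the canonical extension along the identity gives back the canonical extension). First, for a single \hc transformation $H\colon F\Rightarrow G$ the recipe $H_*:=i_2\inv\circ i_1$ does define a morphism of $\Ho(\mathbf{Top})$, because $i_2\colon\cyl(G)\to\cyl(H)$ is a homotopy equivalence by Lemma~\ref{lemma3} applied to $H$ (a diagram on $\calc\times[1]$) with $i=n=1$. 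It then remains to prove: (a) $H_*$ depends only on the homotopy class of $H$; (b) $\cyl$ sends identities to identities; (c) $\cyl$ preserves composition. I would establish (c) first, in the sharp form ``$(H_1)_*=(H_0)_*\circ(H_2)_*$ whenever $H_1\simeq H_2\circ H_0$''---a statement about individual \hc transformations, so no circularity with (a)---and then deduce (a) and (b) from it.

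To prove (c), take $H_0,H_1,H_2$ as in part (ii) of the definition above, witnessed by a \hc diagram $L$ on $\calc\times[2]$ with $\partial_k L=H_k$, and set $F_j:=L\vert_{\calc_j}$ (the three diagrams of the triangle). Let $e_j\colon\cyl(F_j)\to\cyl(L)$ be the inclusion induced by $\calc_j\hookrightarrow\calc\times[2]$, and let $f_k\colon\cyl(\partial_k L)\to\cyl(L)$ be the one induced by the $k$-th face inclusion $\calc\times[1]\hookrightarrow\calc\times[2]$. Writing $\partial_k L\colon F_a\Rightarrow F_b$ with structural inclusions $i_1^{H_k}\colon\cyl(F_a)\to\cyl(\partial_k L)$ and $i_2^{H_k}\colon\cyl(F_b)\to\cyl(\partial_k L)$, the composition law for $\bar{(\cdot)}$ yields $e_a=f_k\circ i_1^{H_k}$ and $e_b=f_k\circ i_2^{H_k}$, while $i_2^{H_k}$ is again a homotopy equivalence by Lemma~\ref{lemma3}. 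Hence, in $\Ho(\mathbf{Top})$,
\[e_b\circ(H_k)_*=f_k\circ i_2^{H_k}\circ(i_2^{H_k})\inv\circ i_1^{H_k}=f_k\circ i_1^{H_k}=e_a,\]
that is, $e_2\circ(H_0)_*=e_1$, $e_2\circ(H_1)_*=e_0$ and $e_1\circ(H_2)_*=e_0$. Now $e_2\colon\cyl(F_2)\to\cyl(L)$ is a homotopy equivalence by Lemma~\ref{lemma3} (this time with $L$ on $\calc\times[2]$ and $i=n=2$), hence an isomorphism, and in particular left-cancellable, in $\Ho(\mathbf{Top})$, so
\[e_2\circ\bigl((H_0)_*\circ(H_2)_*\bigr)=\bigl(e_2\circ(H_0)_*\bigr)\circ(H_2)_*=e_1\circ(H_2)_*=e_0=e_2\circ(H_1)_*\]
forces $(H_1)_*=(H_0)_*\circ(H_2)_*$, which is (c).

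For (b): $\id_F$ is the restriction of $F$ along the projection $\pi\colon\calc\times[1]\to\calc$, and (under $\calc_0\cong\calc\cong\calc_1$) its structural inclusions are $i_1=\overline{s_0}$ and $i_2=\overline{s_1}$ for the two sections $s_0,s_1\colon\calc\to\calc\times[1]$, using $s_j^*\pi^*F=(\pi\circ s_j)^*F=F$. The composition law gives $\bar\pi\circ i_j=\overline{\pi\circ s_j}=\overline{\id_\calc}=\id_{\cyl(F)}$; since $i_2$ is a homotopy equivalence, $\bar\pi=i_2\inv$ in $\Ho(\mathbf{Top})$, and therefore $(\id_F)_*=i_2\inv\circ i_1=\bar\pi\circ i_1=\id_{\cyl(F)}$. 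Finally (a) is the special case $(H_0,H_1,H_2)=(\id_G,H',H)$ of (c): a homotopy $H'\simeq H\circ\id_G$ yields $(H')_*=(\id_G)_*\circ H_*=H_*$. Thus $H\mapsto H_*$ factors through homotopy classes, which defines $\cyl$ on morphisms, and by (b) and (c) the resulting assignment is a functor $\cyl\colon\Coh(\calc,\mathbf{Top})\to\Ho(\mathbf{Top})$.

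The step I expect to need the most care is the bookkeeping in the $\calc\times[2]$ picture: one must keep careful track of which of the maps $e_j$, $f_k$, $i_1^{H_k}$, $i_2^{H_k}$ Lemma~\ref{lemma3} actually certifies to be homotopy equivalences---only the ``terminal'' inclusions $e_2$ and $i_2^{H_k}$ are, because Lemma~\ref{lemma3} covers only the upward-closed subcategories $\calc_{i\dots n}$---and the argument above is arranged precisely so that nothing else is ever inverted or cancelled; the face inclusions $f_0,f_1,f_2$ and the ``initial-vertex'' inclusions $e_0,e_1$ are not homotopy equivalences in general. Everything else is the calculus of isomorphisms in $\Ho(\mathbf{Top})$ together with the two identities for $\bar{(\cdot)}$; in particular the concrete coequalizer model of $\cyl$ should not be needed at all.
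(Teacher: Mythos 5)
Your argument is correct and follows essentially the same route as the paper: functoriality is extracted from a witnessing diagram $L$ on $\calc\times[2]$ by comparing the various inclusions into $\cyl(L)$ and invoking Lemma~\ref{lemma3} to invert exactly the ``terminal'' ones (your $i_2^{H_k}$ and $e_2$), which is precisely the content of the paper's commuting hexagon. Your write-up is in fact somewhat more complete, since the paper displays only the composition argument and leaves the independence of $H_*$ on the representative and the preservation of identities implicit, whereas you derive both from the composition identity.
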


\begin{proof}
We need to show that $(K\circ H)_*\simeq K_*\circ H_*$. Therefore let $L$ be a \hc diagram of shape $\calc\times [2]$ restricting to $H$, $K$, and $K\circ H$ respectively. Let $F_i:=H\vert_{\calc_i}$ for $i=0,1,2$. Naturality follows from the commutativity of the following diagram:
\[\xymatrix@C=2.5ex{
 && {\cyl(F_1)} \ar[ld]_\simeq \ar[d] \ar[rd] \\
& {\cyl(H)} \ar[r] & {\cyl(L)} & {\cyl(K)} \ar[l]_\simeq \\
{\cyl(F_0)} \ar[rr] \ar[ru] \ar[rru] && {\cyl(K\circ H)} \ar[u] && {\cyl(F_2)} \ar[ll]_\simeq \ar[llu]_\simeq \ar[lu]_\simeq 
}\]
\end{proof}

\begin{remark}\label{rem:commutativity_of_triangle}
Hence, a diagram
\[\xymatrix{
{\cyl(F)} \ar[rr]^{H_*} \ar[rd]_f && {\cyl(G)} \ar[ld]^g\\
& Z
}\]
commutes up to homotopy whenever there is a \hc diagram of shape $\widehat{\calc\times [1]}$ which extends $H$ and both extensions of $F$ and $G$ as given by the maps $f$ and $g$.
\end{remark}

\begin{remark}
In general it does not make sense to ask for a functorial way to assign a map $H_*$ (in $\mathbf{Top}$) to a \hc transformation, since composition of to \hc transformations is well-defined only up to homotopy. However if $H\colon F\to G$ is a natural transformation between \hc diagrams, i.e.~between functors on $\calf_\bullet\calc$, then there is an induced map $H_*$ in $\mathbf{Top}$, and this assignment is natural.

In fact, given the canonical extension $\hat G$ of $G$ over the cone $\hat\calc$, the maps
\[\calf_\bullet\hat\calc(c,*) \xrightarrow{\hat G} \map(G(c),\cyl(G))\xrightarrow{H_c^*} \map(F(c),\cyl(G))\]
assemble to an extension of $F$ to a \hc diagram of shape $\hat\calc$. It induces a map $H_*\colon\cyl(F)\to\cyl(G)$ which is well-defined in $\mathbf{Top}$ and functorial for natural transformations.

The homotopy class of the map $H_*$ defined in this way agrees with the class induced by $H$ when considered as a \hc transformation. This follows from the Remark above since one can define a \hc diagram on $\widehat{P\times [1]}$.
\end{remark}

To prove Lemma \ref{lemma3}, one could directly work with the universal property: The fact that the homotopy coherent nerve of $\mathbf{Top}$ is weakly Kan can be used to show that a \hc diagram $F$ over $\calc\times [i]$ and an extension of $F\vert_{\calc_{i\dots n}}$ over the cone determine an extension of the whole diagram $F$ over the cone, which is unique in a suitable sense. 

However, the relation with the Bousfield-Kan homotopy colimit provides a shortcut. In fact, Lemma \ref{lemma3} is direct consequence of the following result, which generalizes the classical cofinality theorem by Bousfield-Kan \cite[Theorem XI.9.2]{Bousfield-Kan(1972)}. I am grateful to Bill Dwyer for pointing this out to me.

\begin{lemma}\label{lem:cofinality}
Let $f\colon \cald\to\calc$ be functor between small categories. The following are equivalent:
\begin{enumerate}
\item For any \hc diagram $F$ on $\calc$, the canonical map
\[\Hocolim (f^*F)\to \Hocolim(F)\]
is a homotopy equivalence.
\item For all $c\in \calc$, the geometric realization of the comma category $c/f$ is contractible.
\end{enumerate}
\end{lemma}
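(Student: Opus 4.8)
I would prove the two implications separately; (i)$\Rightarrow$(ii) is formal, while (ii)$\Rightarrow$(i) is the cofinality statement proper. For (i)$\Rightarrow$(ii), fix $c\in\calc$ and apply (i) to the strict, discrete diagram $R^c=\calc(c,-)$ on $\calc$, regarded as a \hc diagram via $\varepsilon$. On one side, $\Hocolim(\varepsilon^*R^c)\simeq\hocolim(R^c)$ by the natural equivalence recalled before Theorem~\ref{thm:relation_to_hocolim}, and $\hocolim(R^c)\simeq|N_\bullet(c\backslash\calc)|$ is contractible because $c\backslash\calc$ has the initial object $\id_c$. On the other side, the identity $\varepsilon_\calc\circ\calf_\bullet f=f\circ\varepsilon_\cald$ gives $f^*\varepsilon^*R^c=\varepsilon^*(f^*R^c)$, where $f^*R^c\colon d\mapsto\calc(c,f(d))$ is again a discrete diagram on $\cald$, so $\Hocolim(f^*\varepsilon^*R^c)\simeq\hocolim(f^*R^c)$; and since the homotopy colimit of a set-valued diagram is the classifying space of its category of elements, $\hocolim(f^*R^c)\simeq|N_\bullet(c/f)|$. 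As the canonical map respects these identifications, (i) forces $|N_\bullet(c/f)|\simeq *$.

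For (ii)$\Rightarrow$(i) I would rewrite both homotopy colimits as topological coends over the enriched category $|\calf_\bullet\calc|$. By construction $\Hocolim(F)=|\psi^\calc_\bullet|\otimes_{|\calf_\bullet\calc|}F$; and since $f^*F=F\circ\calf_\bullet f$, extension of scalars along $\calf_\bullet f$ rewrites $\Hocolim(f^*F)=|\psi^\cald_\bullet|\otimes_{|\calf_\bullet\cald|}f^*F$ as $|(\calf_\bullet f)_!\psi^\cald_\bullet|\otimes_{|\calf_\bullet\calc|}F$, the canonical map being induced by the evident map of right $|\calf_\bullet\calc|$-modules
\[\theta\colon|(\calf_\bullet f)_!\psi^\cald_\bullet|\longrightarrow|\psi^\calc_\bullet|.\]
A co-Yoneda computation identifies $(\calf_\bullet f)_!\psi^\cald_\bullet(c)$ with a homotopy-coherent thickening of the nerve of the comma category $c/f$ (the $n$-simplices of the strict model being $\coprod\calf_n\calc(c,f(d_0))\times\calf_n\cald(d_0,d_1)\times\cdots\times\calf_n\cald(d_{n-1},d_n)$), whose realization is, by the equivalences that $\varepsilon$ induces on morphism spaces (Example~\ref{example:augmentation_functor}), naturally homotopy equivalent to $|N_\bullet(c/f)|$; likewise $|\psi^\calc_\bullet(c)|\simeq|N_\bullet(c\backslash\calc)|\simeq *$, and under these identifications $\theta_c$ becomes the map induced by $f$. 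Hence $\theta$ is an objectwise homotopy equivalence \emph{if and only if} (ii) holds.

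It remains to check that $(-)\otimes_{|\calf_\bullet\calc|}F$ carries the objectwise equivalence $\theta$ to a homotopy equivalence; granting this, (ii)$\Rightarrow$(i) follows, and Lemma~\ref{lemma3} is the special case noted in the text. The point is that $\psi^\calc_\bullet$ and $(\calf_\bullet f)_!\psi^\cald_\bullet$ are realizations of simplicial right $\calf_\bullet\calc$-modules built, levelwise, from free (representable) modules $\calf_\bullet\calc(-,c_0)$ — here one uses that $(\calf_\bullet f)_!$ sends representables to representables — while $(-)\otimes_{|\calf_\bullet\calc|}F$ sends the free module on $c_0$ to $F(c_0)$. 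So, running the two skeletal filtrations against each other, a pushout-by-pushout induction using the gluing lemma for homotopy equivalences (the skeletal inclusions being cofibrations, as realizations of injections of simplicial sets) reduces the claim to the trivial free case. This homotopy-invariance of the two-sided bar construction over the simplicially enriched index category $\calf_\bullet\calc$ — in effect, the cofibrancy bookkeeping — is the only genuinely technical ingredient; everything else is formal. (A shortcut is available if one may assume the rectification theorem for \hc diagrams: writing $F\simeq\varepsilon^*G$ for a strict $G$ gives $\Hocolim(F)\simeq\hocolim(G)$ and $\Hocolim(f^*F)\simeq\hocolim(f^*G)$ compatibly, and the statement reduces verbatim to the classical cofinality theorem \cite[Theorem~XI.9.2]{Bousfield-Kan(1972)}.)
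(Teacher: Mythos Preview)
Your argument is correct and follows essentially the same route as the paper. For (i)$\Rightarrow$(ii) the paper tests against the thickened representable $F=\vert\calf_\bullet\calc(c,-)\vert$ rather than your discrete $\varepsilon^*\calc(c,-)$, but the two are naturally equivalent via the augmentation and lead to the same conclusion; for (ii)$\Rightarrow$(i) the paper simply says the argument is ``the same as the classical cofinality theorem'', whereas you have spelled out precisely that coend/change-of-scalars argument, including the identification of $(\calf_\bullet f)_!\psi^\cald_\bullet(c)$ with (a thickening of) $\vert c/f\vert$ --- which is exactly the paper's reformulation (ii').
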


\begin{proof}
First notice that condition (ii) can be replaced by the condition (ii') that for all $c\in\calc$, we have
\[\Hocolim_\cald \vert\calf_\bullet\calc(c, f(-))\vert \simeq *.\]
In fact, the augmentation defines an $\calf_\bullet\calc$-natural homotopy equivalence
\[\varepsilon\colon \vert\calf_\bullet\calc(c, f(-))\vert\to \calc(c, f(-)),\]
and 
\[\Hocolim  \calc(c, f(-)) \simeq \hocolim  \calc(c, f(-)) = \vert c/f\vert.\]

To see that (i) implies (ii'), apply (i) to $F=\vert\calf_\bullet \calc(c,-)\vert$ and notice that
\[\Hocolim F \simeq \hocolim  \calc(c,-)=\vert c/\calc\vert\simeq *.\]

The implication from (ii') to (i) is shown in the same way as the classical cofinality theorem.
\end{proof}

We now turn to rectifications. Let $F$ be a \hc diagram of the shape $\calc$. 

\begin{definition}
A rectification of $F$ is a \hc transformation $H\colon F\Rightarrow G$  such that 
\begin{enumerate}
\item $G$  is a strict diagram, and
\item  $H$ is invertible in $\Coh(\calc, \mathbf{Top})$.
\end{enumerate}
\end{definition}

Rectifications always exist, by the following Theorem which is goes back to Vogt \cite{Vogt(1973)}:
\begin{theorem}[{\cite{Cordier-Porter(1986)}}]
\begin{enumerate}
\item A \hc transformation $H\colon F\Rightarrow G$ is invertible in $\Coh(\calc,\mathbf{Top})$ if and only if it is a level homotopy equivalence, i.e.~each map $H_c\colon F(c)\to G(c)$ is  a homotopy equivalence.
\item Denote by $\Ho(\mathbf{Top}^\calc)$ the category obtained from the functor category $\mathbf{Top}^\calc$ by inverting the level homotopy equivalences. The induced functor
\[\Ho(\mathbf{Top}^\calc)\to\Coh(\calc,\mathbf{Top}).\]
is an equivalence of categories. 
\end{enumerate}
\end{theorem}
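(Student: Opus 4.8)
\emph{Plan of proof.} The plan is to establish the ``if'' direction of part (i) first --- this is the substantial point, essentially Vogt's rectification theorem \cite{Vogt(1973)} --- and then to deduce (ii) from it. Throughout I regard \hc diagrams of shape $\calc$, together with simplicial natural transformations between them, as the objects and mapping complexes of a simplicially enriched category. Since $\mathbf{Top}$ has Kan mapping complexes and $\calf_\bullet\calc$ is the Dwyer--Kan free resolution, these mapping complexes are again Kan complexes --- the weak Kan property recorded in the Remark following the definition of $\Coh(\calc,\mathbf{Top})$ --- and $\Coh(\calc,\mathbf{Top})$ is exactly the homotopy category of this enriched category, i.e.~$\pi_0$ of the mapping complexes with the composition inherited from that Remark. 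The ``only if'' direction of (i) is then immediate: an inverse $[K]$ of $[H]$ is witnessed by \hc diagrams on $\calc\times[1]$ and $\calc\times[2]$, and restricting all of these along $\{c\}\times[-]\hookrightarrow\calc\times[-]$ exhibits $[K_c]$ as a two-sided inverse of $[H_c]$ in $\Ho(\mathbf{Top})$, which is to say each $H_c$ is a homotopy equivalence.

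For the ``if'' direction, let $H\colon F\Rightarrow G$ be a level homotopy equivalence. One must construct a \hc transformation $K\colon G\Rightarrow F$ together with \hc diagrams on $\calc\times[2]$ exhibiting that $[K]$ is a two-sided inverse of $[H]$ in $\Coh(\calc,\mathbf{Top})$. I would construct all of this data by a single obstruction-theoretic induction over a cellular presentation of the free resolution $\calf_\bullet\calc$ (the cells being indexed, in effect, by the non-degenerate simplices of the mapping complexes $\calf_\bullet\calc(c,d)$). At each stage the extension problem --- for $K$ and, simultaneously, for the two witnessing diagrams --- reduces to a lifting problem against a horn inclusion into a Kan complex assembled from the mapping spaces $\map(F(c),G(c))$ and $\map(G(c),F(c))$ and the edges $H_c$; because every $H_c$ is a homotopy equivalence, these lifting problems can be solved. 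Organizing this induction --- choosing the cellular filtration so that the obstructions genuinely reduce to Kan horn-fillings, handling both composites coherently, and making the face/degeneracy bookkeeping close up --- is the heart of the matter and the step I expect to demand the most care; it is precisely where the weak Kan property is used in full strength.

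Granting (i), part (ii) splits into three points. First, precomposition with the augmentation $\varepsilon\colon\calf_\bullet\calc\to\calc$ carries strict diagrams to \hc diagrams and pointwise homotopy equivalences to level homotopy equivalences, which are invertible in $\Coh(\calc,\mathbf{Top})$ by (i); hence $\varepsilon^*$ descends to a functor $\Ho(\mathbf{Top}^\calc)\to\Coh(\calc,\mathbf{Top})$. Second, for essential surjectivity I would rectify an arbitrary \hc diagram $F$ by taking $G$ to be the homotopy left Kan extension of $F$ along $\varepsilon$, functorially presented as a homotopy colimit over comma categories; the unit $F\Rightarrow\varepsilon^*G$ is a level homotopy equivalence because $\varepsilon$ induces homotopy equivalences on all morphism spaces (Example \ref{example:augmentation_functor}), hence is invertible by (i), so $G$ represents $F$. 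Third, for full faithfulness one compares, for strict diagrams $X,Y$, the derived hom set $\Ho(\mathbf{Top}^\calc)(X,Y)$ with $\pi_0$ of the complex of simplicial natural transformations $\varepsilon^*X\to\varepsilon^*Y$; both are invariant under homotopy equivalences in $X$ and $Y$ (the mapping complexes of $\mathbf{Top}$ being Kan complexes invariant under homotopy equivalences, and $\calf_\bullet\calc$ being free), and the comparison map is an equivalence because $\varepsilon$ induces equivalences on morphism spaces, i.e.~restriction along $\varepsilon$ is homotopically fully faithful. Assembling these three points yields the asserted equivalence of categories, and in particular re-proves that rectifications always exist.

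\emph{Main obstacle.} The genuine work is the ``if'' direction of (i): upgrading a levelwise homotopy equivalence to an honest homotopy-coherent inverse, with all higher coherences. Everything in (ii) is then formal bookkeeping around $\varepsilon^*$ and a bar construction, the only mild subtlety there being to present the homotopy left Kan extension as a strict $\mathbf{Top}$-valued functor on $\calc$.
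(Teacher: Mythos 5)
First, a point of calibration: the paper does not prove this theorem at all --- it is imported verbatim from Cordier--Porter \cite{Cordier-Porter(1986)} (going back to Vogt \cite{Vogt(1973)}) and used as a black box. So there is no internal proof to compare against; what can be assessed is whether your sketch would actually yield the result. Structurally it follows the standard Vogt/Cordier--Porter line: invert level equivalences via horn-filling in a locally Kan setting for (i), and use the augmentation $\varepsilon$ together with a bar-type rectification (which is essentially the paper's own standard rectification $rF(c)=\cyl(\varphi_c^*F)$, with the unit an equivalence by Lemma \ref{lemma1}) for (ii). That outline is sound.

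However, as a proof it has two genuine gaps. The first you name yourself: the entire content of the ``if'' direction of (i) is the inductive construction of a coherent inverse together with the two witnessing $\calc\times[2]$ diagrams, and you defer exactly this step (``the heart of the matter and the step I expect to demand the most care''). This is not bookkeeping one can wave at: it is Joyal's theorem that equivalences in a quasi-category (equivalently, in a locally Kan simplicial category) are invertible, and the filtration/horn-filling argument there is the theorem. Also note that the paper's Remark asserts the weak Kan property of the coherent \emph{nerve} of $\mathbf{Top}$, which is not the same statement as your claim that the mapping complexes of simplicial natural transformations are Kan; the latter needs the freeness (cofibrancy) of $\calf_\bullet\calc$ and the local Kan-ness of $\mathbf{Top}$, and should be argued. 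The second gap is in full faithfulness for (ii): you compare ``the derived hom set $\Ho(\mathbf{Top}^\calc)(X,Y)$'' with $\pi_0$ of a mapping complex, but $\Ho(\mathbf{Top}^\calc)$ is defined as a bare localization at \emph{genuine} level homotopy equivalences (Hurewicz, not weak), so its hom-sets are a priori zig-zags modulo relations; identifying them with $\pi_0$ of the coherent mapping complex requires either a calculus-of-fractions argument or a rectification of coherent transformations between strict diagrams into strict zig-zags --- which is itself a nontrivial piece of the Cordier--Porter proof, not a consequence of ``$\varepsilon$ is homotopically fully faithful.'' The one thing working in your favor, worth making explicit, is that in the Hurewicz setting every space is fibrant and cofibrant, which is why no cofibrancy hypotheses on the diagrams are needed anywhere.
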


We are going to show that homotopy colimits allow to construct rectifications. Moreover, the homotopy colimit of a given \hc diagram is just the (ordinary) colimit of this ``standard rectification''.

Denote, for any object $c$ of $\calc$, by
\[\varphi_c\colon \calc/c\to \calc\]
the forgetful functor which sends the morphism $f\colon d\to c$ to $d$. For each $f\colon d\to c$ in $\calc$, the triangle
\[\xymatrix{
{\calc/d} \ar[rr]^{f_*} \ar[rd]_{\varphi_d} && {\calc/c} \ar[ld]^{\varphi_c} \\
& \calc
}\]
commutes. Thus any morphism $f\colon d\to c$ induces a map
\[\bar f\colon \cyl(\varphi_c^* F)\to \cyl(\varphi_d^* F).\]
It follows from naturality that $\overline{g\circ f}=\bar g \circ \bar f$ whenever $g$ and $f$ are composable. Hence we obtain a strict diagram $rF$ of shape $\calc$ with $rF(c)=\cyl(\varphi_c^* F)$.

\begin{proposition}
There is a ``standard'' rectification $H\colon F\Rightarrow rF$.
\end{proposition}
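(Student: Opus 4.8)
The plan is to produce the $\hc$ transformation $H\colon F\Rightarrow rF$ directly, by exhibiting a single $\hc$ diagram $\tilde F$ of shape $\calc\times[1]$ which restricts to $F$ over $\calc_0$ and to $rF$ over $\calc_1$, and then to check that $H$ is a level homotopy equivalence (so that it is automatically invertible in $\Coh(\calc,\mathbf{Top})$ by the Vogt--Cordier--Porter theorem quoted above). The key point on which everything rests is the identification, for each object $c$, of $rF(c)=\cyl(\varphi_c^*F)$ together with the structure maps $\bar f$; I would first record that $rF(c)\simeq \cyl(\varphi_c^*F)\simeq F(c)$ because $\calc/c$ has the terminal object $\id_c$, and the restriction of $\varphi_c^*F$ to that terminal object is $F(c)$ — so one gets the level equivalence for free once $H$ is constructed with the correct value at $\id_c$.

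To build $\tilde F$, I would use the cofinality machinery already in place. The functor $\varphi\colon \calc/{-}\to\calc$ should be thought of as a diagram of categories over $\calc$, and there is a canonical natural transformation from it to the constant diagram $\calc$ (sending $f\colon d\to c$ to $f$ itself), inducing for each $c$ a functor $\calc/c\to\calc/c'$ whenever $c\to c'$, compatibly. Concretely, the data of $\tilde F$ as a simplicial functor on $\calf_\bullet(\calc\times[1])$ unwinds to: the spaces $F(c)$ and $rF(c)=\cyl(\varphi_c^*F)$; the given coherence maps of $F$; the strict-diagram structure maps $\bar f$ of $rF$; and, crucially, for each morphism-chain in $\calc$ ending at $c$ and passing through the edge $0<1$ of $[1]$, a coherent system of maps $F(c_0)\times|\cdots|\to \cyl(\varphi_c^*F)$. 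These last are supplied by the universal (initial) extension $\widehat{\varphi_c^*F}$ of $\varphi_c^*F$ to a $\hc$ diagram over $\widehat{\calc/c}$: precomposing the structure maps of $\widehat{\varphi_c^*F}$ with the canonical functor $\calf_\bullet(\calc/c)\to\calf_\bullet(\calc/c)$ and with the inclusions arising from the objects of $\calc/c$ gives exactly the maps into $\cyl(\varphi_c^*F)=rF(c)$ that one needs, and their naturality in $c$ (under the $f_*$'s) is precisely the naturality already recorded in the discussion of $\bar f$ and $\overline{g\circ f}=\bar g\circ\bar f$. Assembling these into a simplicial functor on $\calf_\bullet(\calc\times[1])$ is then a matter of checking the simplicial identities, which follow from the co-monad identities for $\calf$ together with the universal properties of the various $\cyl(\varphi_c^*F)$.

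The component $H_c\colon F(c)\to rF(c)=\cyl(\varphi_c^*F)$ produced this way is, by construction, the map induced by the inclusion of the terminal object $\id_c\in\calc/c$ into $\widehat{\varphi_c^*F}$; since $\calc/c$ has a terminal object, Lemma~\ref{lemma1} (or Lemma~\ref{lemma3} with $n=0$) shows this map is a homotopy equivalence onto $\cyl(\varphi_c^*F)$, hence $H$ is a level homotopy equivalence. By part (i) of the Vogt--Cordier--Porter theorem, $H$ is invertible in $\Coh(\calc,\mathbf{Top})$, and since $rF$ is strict by construction, $H$ is a rectification. The main obstacle I expect is bookkeeping: writing down the simplicial functor $\tilde F$ on $\calf_\bullet(\calc\times[1])$ explicitly and verifying the naturality/simplicial identities for the mixed-degree coherence maps into $\cyl(\varphi_c^*F)$ — everything is forced by the universal properties, but organizing the induction (analogous to the inductive definitions of $\iota_\bullet$, $\kappa_\bullet$, $\lambda_\bullet$ earlier in the paper) cleanly is where the real work lies. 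An alternative, cleaner route worth mentioning is to invoke the universal property of $\cyl$ more abstractly: the assignment $c\mapsto \cyl(\varphi_c^*F)$ together with the canonical maps is, by the end-of-section remarks, the value of a functor, and the "diagonal" inclusions $F(c)\hookrightarrow \cyl(\varphi_c^*F)$ patch to the desired $\hc$ transformation because they patch at the level of the defining coequalizers; but the explicit simplicial construction is what makes $rF$ usable for the geometric arguments of the later sections, so that is the version I would carry out in detail.
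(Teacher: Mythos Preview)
Your proposal is correct and follows essentially the same approach as the paper. The paper makes one step more explicit than you do: to turn the universal extension $\widehat{\varphi_c^*F}$ on $\widehat{\calc/c}$ into the required mixed coherence maps, it writes down the decomposition
\[
\calf_\bullet\bigl(\calc\times[1]\bigr)\bigl((d,0),(c,1)\bigr)=\coprod_{f\colon d\to c}\calf_\bullet\bigl(\calc/c\times[1]\bigr)\bigl((f,0),(\id_c,1)\bigr)
\]
and then pulls $\widehat{\varphi_c^*F}$ back along the functor $\calc/c\times[1]\to\widehat{\calc/c}$ that is the identity on $\calc/c\times\{0\}$ and collapses $\calc/c\times\{1\}$ to the cone point. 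This is the clean organizational device that replaces the inductive bookkeeping you anticipate; naturality in $c$ and $d$ then reduces to commutativity of two explicit squares, and the identification of $H_c$ with the inclusion of the terminal object (hence a homotopy equivalence by Lemma~\ref{lemma1}) is exactly as you say.
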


\begin{proof}
We need to construct a \hc diagram $H$ of shape $\calc\times [1]$ such that $H\vert_0=F$ while $H\vert_1 =rF$. Hence we need to construct natural maps
\[\calf_\bullet\bigl(\calc\times [1]\bigr)\bigl((d,0),(c,1)\bigr)\to\map\bigl(F(d), rF(c)\bigr).\]

Notice that 
\[\calf_\bullet\bigl(\calc\times [1]\bigr)\bigl((d,0), (c,1)\bigr) = \coprod_{f\colon d\to c} \calf_\bullet\bigl(\calc/c\times [1]\bigr)\bigl((f,0), (\id_c, 1)\bigr).\]
Now there is a functor
\[\calc/c\times [1]\to \widehat{\calc/c}\]
which is the identity on $\calc/c\times [0]$ and sends $(f,1)$ to the terminal object $*$. We may use this functor to pull back the \hc diagram on $\widehat{\calc/c}$ given by the mapping cylinder of $\varphi_c^*F$. Evaluation of this diagram gives a map
\[
\calf_\bullet\bigl(\calc/c\times [1]\bigr)\bigl((f,0), (\id_c, 1)\bigr)
 \to \map\bigl(F(d), \cyl(\varphi_c^* F)\bigr)=\map\bigl(F(d), rF(c)\bigr)
\]

It is obvious that these assignments are natural in $c$. To show naturality in $d$, we have to show that for $h\in\calf_n\calc(c,c')$ the following diagram is commutative:
\[\xymatrix@C=2.5ex{
{\calf_n(\calc\times[1])((d,0),(c,1))} \ar[r] \ar[d]^{h_*}  
 & {\underset{f\colon d\to c}{\coprod} \calf_n\widehat{\calc/c}(f,*)} \ar[d]^{h_*} \ar[r]
 & {\map(F(d)\times\Delta^n, rF(c))} \ar[d]^{h_*}
\\
{\calf_n(\calc\times[1])((d,0),(c',1))} \ar[r]   
 & {\underset{g\colon d\to c}{\coprod} \calf_n\widehat{\calc/c'}(g,*)}  \ar[r]
 & {\map(F(d)\times\Delta^n, rF(c'))} 
}\]
Here the right-hand vertical map $h_*\colon rF(c)=\cyl (\varphi_c^*F)\to \cyl(\varphi_{c'}^F)=rF(c')$ is given by the naturality of the homotopy colimit with respect to the index category. By definition of this naturality, the right-hand square commutes.
The commutativity of the left-hand square is obvious.

Finally let $\alpha\colon (c,0)\to(c,1)$ be the unique morphism in $\calc\times [1]$ which is the identity on $c$. The image of $\alpha$ under $H$ is the canonical map
\[F(c)\to rF(c)=\cyl(\varphi_c^* F)\]
which includes the value of $\varphi_c^* F$ at the terminal object $\id_c$ of $\calc/c$ into the homotopy colimit. We proved in Lemma \ref{lemma1} that this map is a homotopy equivalence. 
\end{proof}

The following two results are included for completeness; they are not needed elsewhere in the text.

\begin{proposition}\label{mapping_cylinder_as_a_colimit}
The collection of canonical mappings $\bar\varphi_c\colon rF(c)=\cyl(\varphi_c^* F)\linebreak[1]\to\cyl(F)$ (for $c\in\calc$) induce a homeomorphism
\[\colim (rF)\xrightarrow{\cong} \cyl(F).\]
\end{proposition}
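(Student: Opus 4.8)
The plan is to compute $\colim(rF)$ by unwinding the coequalizer (``tensor product'') descriptions of all the spaces involved and showing that, after passing to the colimit over $c$, the combinatorial indexing data on the two sides agree on the nose. Recall that $\cyl(F)=\vert\phi_\bullet\vert\otimes_{\vert\calf_\bullet\calc\vert}F$ and that, for each object $c$, the rectification value $rF(c)=\cyl(\varphi_c^*F)$ has the analogous description $\vert\phi^{(c)}_\bullet\vert\otimes_{\vert\calf_\bullet(\calc/c)\vert}\varphi_c^*F$, where $\phi^{(c)}_\bullet(g\colon d\to c)=\calf_\bullet\widehat{\calc/c}\bigl((g\colon d\to c),*\bigr)$ and $\varphi_c^*F(g\colon d\to c)=F(d)$. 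The functor $\varphi_c$ extends to a functor $\widehat{\calc/c}\to\hat\calc$ (preserving the cone points), $\bar\varphi_c$ is by construction induced by this extension, and on the indexing simplicial sets it is the map $\phi^{(c)}_\bullet(g\colon d\to c)\to\phi_\bullet(d)$ coming from $\widehat{\calc/c}\to\hat\calc$. Since geometric realization, coproducts, the functors $(-)\times F(d)$, and coequalizers all commute with colimits (we work, as throughout the paper, in compactly generated spaces), and since $F(d)$ and $\calf_\bullet\calc(d,d')$ do not depend on $c$, the statement reduces to the following purely combinatorial claim: the canonical map of simplicial sets
\[\colim_{c\in\calc}\ \coprod_{g\colon d\to c}\ \calf_\bullet\widehat{\calc/c}\bigl((g\colon d\to c),*\bigr)\ \longrightarrow\ \calf_\bullet\hat\calc(d,*)\]
is an isomorphism, natural in $d$. (Here one also invokes the elementary bijection $\coprod_{g}(\calc/c)(g,g')\cong\calc(d,d')$, with $g\colon d\to c$ ranging and $g'\colon d'\to c$ fixed, together with its formal-word analogue, to dispose of the $\calf_\bullet\calc(d,d')$-factors in the coequalizer.)

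To prove the claim I would analyse simplices directly. Since $*$ is terminal in $\hat\calc$, a simplex of $\calf_\bullet\hat\calc(d,*)$ amounts to an iterated word of composable non-identity morphisms of $\calc$ issuing from $d$, terminated by the unique morphism to $*$; in particular it determines a well-defined endpoint object $e$. A simplex of $\calf_\bullet\widehat{\calc/c}((g),*)$ carries the same underlying $\calc$-data, plus one extra datum: a morphism $e\to c$, recording how the word lifts to $\calc/c$. The displayed map forgets this factorization, so its fibre over a fixed simplex with endpoint $e$ is the discrete set $\coprod_{c}\calc(e,c)=\obj(e\backslash\calc)$, on which $\calc$ acts by post-composition; and $\colim_{c\in\calc}\calc(e,c)=\pi_0(e\backslash\calc)=*$, because $e\backslash\calc$ is connected, having the initial object $(\id_e\colon e\to e)$. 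Hence the map is a bijection in each simplicial degree, a distinguished preimage being the simplex over $\calc/e$ whose factorization is $\id_e$. One then checks that this bijection respects faces and degeneracies and is natural in $d$; realizing and tensoring with $F$ then yields the homeomorphism $\colim(rF)\xrightarrow{\cong}\cyl(F)$, which is the one induced by the maps $\bar\varphi_c$ by construction.

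The step I expect to be the main obstacle is the bookkeeping packed into the previous paragraph. One has to make precise the iterated-word structure of $\calf_\bullet\hat\calc(d,*)$ and of $\calf_\bullet\widehat{\calc/c}((g),*)$ — in particular the way the cone point intervenes at each level of the iteration — and then verify that ``forget the factorization through $c$'' commutes with the face operators, which are given by formal composition in the free categories (with $U$ playing the role of $d_0$ in degree $0$); the degenerate boundary cases, such as $d$ itself being the endpoint or the empty word, also need to be dispatched. Once the simplicial isomorphism of the claim is established, everything else — interchanging $\colim_c$ with realization, products, coproducts and the coequalizer, and identifying the resulting comparison map with $\colim\bar\varphi_c$ — is formal.
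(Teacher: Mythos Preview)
Your proposal is correct and follows essentially the same route as the paper: both arguments reduce the statement to the combinatorial isomorphism
\[
\colim_{c\in\calc}\Bigl(\coprod_{f\in\calc(d,c)}\calf_n\widehat{\calc/c}(f,*)\Bigr)\xrightarrow{\ \cong\ }\calf_n\hat\calc(d,*),
\]
natural in $d$. The only difference is presentational: the paper reaches this point by the universal property (an extension of $F$ over $\hat\calc$ with value $X$ corresponds to a compatible family of extensions of the $\varphi_c^*F$, which unwinds to maps out of the left-hand side), whereas you manipulate the explicit coequalizer formulas and commute $\colim_c$ through realization, products and coproducts; these are Yoneda-dual to one another. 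In fact you supply more than the paper does, since the paper simply asserts the displayed bijection while you sketch a proof via the endpoint object $e$ and the identity $\colim_{c}\calc(e,c)\cong\pi_0(e\backslash\calc)=\ast$.
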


\begin{proof}
We have to show that an extension of $F$ to a \hc diagram $G$ of shape $\hat\calc$ with $G(*)=X$ is uniquely determined by a compatible collection of maps 
\[\alpha_c\colon rF(c)=\cyl(\varphi_c^* F)\to X.\] 
Each of the maps $\alpha_c$ determines an extension of $\varphi_c^* F$ to a \hc diagram $G_c$ of shape $\widehat{\calc/c}$; the compatibility assumption translates to the fact for all strings $d\xrightarrow{f} c\xrightarrow{g} c'$ in $\calc$, the following triangles commute:
\[\xymatrix{
{\vert\calf_\bullet\widehat{\calc/c}(f,*) \vert} \ar[rr]^{g_*} \ar[rd]^{G_c} 
 && {\vert\calf_\bullet\widehat{\calc/c'}(g\circ f,*) \vert} \ar[ld]_{G_{c'}}\\
 & {\map(F(d),X)}
}\]
This corresponds, for any $d$, to giving a map
\begin{equation}\label{eq:induced_map}\colim_c \left(\coprod_{f\in\calc(d,c)} \left\vert \calf_\bullet\widehat{\calc/c}(f,*)\right\vert\right) \to \map(F(d),X).
\end{equation}

Now notice that the functors $\varphi_c$ induce for all $n$ a bijection
\[\colim_c \left(\coprod_{f\in\calc(d,c)} \calf_n\widehat{\calc/c}(f,*)\right) \to \calf_n\hat\calc(d,*)\] 
Passing to the geometric realization, we see that the left-hand side of \eqref{eq:induced_map} is indeed homeomorphic to $\vert\calf_\bullet\hat\calc(d,*)\vert$. Hence the maps $\eqref{eq:induced_map}$, for $d\in \calc$, precisely determine an extension of $F$ to a \hc diagram $G$ of shape $\hat\calc$, such that $G(*)=X$.
\end{proof}

As an application we can show that the standard rectification satisfies a certain ``cofibrancy condition''. Denote, for any object $c$ of $\calc$, by $\partial c$ the full subcategory of $\calc/c$ of all objects different from the terminal object $\id_c$.

\begin{corollary}\label{cor:standard_rectification_is_cofibrant}
Given $F$ as above, the rectification $rF$ constructed in this section satisfies the following condition:
For any object $c$ of $\calc$, the canonical map
\[\colim_{(f\colon d\to c)\in\partial c}rF(d) \to rF(c)\]
is a cofibration.
\end{corollary}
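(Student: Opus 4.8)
The map in question is the \emph{latching map} of the strict diagram $rF$ at the object $c$, so the corollary is a projective-cofibrancy statement about $rF$. The plan is to reduce, via Proposition~\ref{mapping_cylinder_as_a_colimit}, to the situation in which $c$ is a terminal object of the index category, and then to read off the cofibration property from the concrete coequalizer model of $\cyl$.

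For the reduction, apply Proposition~\ref{mapping_cylinder_as_a_colimit} to the restricted \hc diagram $\varphi_c^*F$ on $\calc/c$: this gives a homeomorphism $rF(c)=\cyl(\varphi_c^*F)\cong\colim_{\calc/c}\bigl(r(\varphi_c^*F)\bigr)$. The canonical isomorphism $(\calc/c)/f\cong\calc/d$ for $f\colon d\to c$ (an object over $f$ is just its underlying morphism of $\calc$ with target $d$, and the forgetful functor followed by $\varphi_c$ becomes $\varphi_d$) identifies $r(\varphi_c^*F)$ with the restriction $\varphi_c^*(rF)$ as strict diagrams of shape $\calc/c$; under it, the diagram $(f\colon d\to c)\mapsto rF(d)$ on $\partial c$ becomes $\varphi_c^*(rF)\vert_{\partial c}$. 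Since $\id_c$ is terminal in $\calc/c$ and $\partial c=(\calc/c)/\id_c$ with its terminal object removed, the asserted map for $(\calc,c)$ is exactly the instance, for $(\calc',c')=(\calc/c,\id_c)$, of the statement: \emph{for any \hc diagram $F'$ on a small category $\calc'$ with terminal object $t$, the canonical map $\colim_{\calc'\smallsetminus\{t\}}\bigl(rF'\vert_{\calc'\smallsetminus\{t\}}\bigr)\to rF'(t)=\cyl(F')$ is a cofibration.}

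To prove this, I would unwind the coequalizer $\cyl(F')=\lvert\phi_\bullet\rvert\otimes_{\lvert\calf_\bullet\calc'\rvert}F'$ with $\phi_\bullet(b)=\calf_\bullet\widehat{\calc'}(b,*)$, and filter the simplicial sets $\phi_\bullet(b)$ by skeleta. This exhibits $\cyl(F')$ as built up from the subspace assembled from the cells whose indexing words do not involve $t$ — which one identifies with $\colim_{\calc'\smallsetminus\{t\}}(rF'\vert)$ by re-running the argument of Proposition~\ref{mapping_cylinder_as_a_colimit} over the subcategory $\calc'\smallsetminus\{t\}$ — by successively attaching the remaining non-degenerate simplices through pushouts along the cofibrations $\partial\Delta^n\times F'(b)\hookrightarrow\Delta^n\times F'(b)$. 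An iterated pushout of cofibrations is a cofibration, which finishes the argument.

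The main obstacle is precisely this last cellular bookkeeping: separating the cells of $\cyl(F')$ into those ``not meeting $t$'' and the rest, identifying the former contribution with $\colim_{\calc'\smallsetminus\{t\}}(rF'\vert)$, and checking that the attaching maps of the latter are honest cofibrations of (compactly generated) spaces. The delicate point is that $\calc'$ need not be the cone on $\calc'\smallsetminus\{t\}$ — the terminal object $t$ may admit non-identity morphisms into and out of it — so Lemma~\ref{lemma1} cannot simply be invoked, and the word combinatorics of $\calf_\bullet\widehat{\calc'}$ has to be handled directly.
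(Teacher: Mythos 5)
Your reduction is exactly the one in the paper: the commutative square of forgetful functors identifies $rF(d)$ with $\cyl(\varphi_f^*\varphi_c^*F)$, and Proposition~\ref{mapping_cylinder_as_a_colimit} then identifies $\colim_{(f\colon d\to c)\in\partial c} rF(d)$ with $\cyl\bigl((\varphi_c^*F)\vert_{\partial c}\bigr)$, so everything comes down to showing that the inclusion $\cyl\bigl((\varphi_c^*F)\vert_{\partial c}\bigr)\to\cyl(\varphi_c^*F)=rF(c)$ is a cofibration. At this point the paper simply applies Lemma~\ref{lemma1}: since $\id_c$ is terminal in $\calc/c$, the space $\cyl(\varphi_c^*F)$ is the mapping cylinder of $\cyl\bigl((\varphi_c^*F)\vert_{\partial c}\bigr)\to F(c)$, and the inclusion of the top of a mapping cylinder is a cofibration.

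This last step is where your proposal has a genuine gap. You decline to invoke Lemma~\ref{lemma1} on the grounds that $\calc/c$ need not be the cone $\widehat{\partial c}$, and you substitute a skeletal-filtration argument which you yourself label ``the main obstacle'' and do not carry out; as written, the cofibration claim is asserted, not proved. Note also that your objection is narrower than you state: because $\id_c$ is terminal, every object of $\partial c$ has exactly one morphism \emph{into} the terminal object (which is precisely the cone structure), so the only possible failure is a non-identity morphism \emph{out of} it, i.e.\ a non-identity $f\colon d\to c$ in $\calc$ admitting a section. For the index categories actually used in this paper (posets of simplices) this never occurs, $\calc/c=\widehat{\partial c}$ on the nose, and Lemma~\ref{lemma1} applies verbatim. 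If you insist on a completely arbitrary small category $\calc$, then you do need your cellular bookkeeping (or an extension of Lemma~\ref{lemma1}) — in particular you must identify the part of $\vert\phi_\bullet\vert\otimes_{\vert\calf_\bullet\calc'\vert}F'$ indexed by words avoiding $t$ with $\colim_{\calc'\smallsetminus\{t\}}(rF'\vert)$ and check that the remaining nondegenerate simplices attach along $\partial\Delta^n\times F'(b)$ after the coequalizer identifications — and none of that is done. Either restrict to the case where no morphism into $c$ is split and quote Lemma~\ref{lemma1} as the paper does, or complete the filtration argument.
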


\begin{proof}
Let $f\colon d\to c$ be given. Recall that $rF(d)=\cyl(\varphi_d^* F)$
where $\varphi_d^* F$ is the \hc diagram pulled back to $\calc/d$. Now note that there is a commutative diagram of functors
\[\xymatrix{
(\calc/c)/f \ar[rr]^{\varphi_f} \ar[d]^\cong && {\calc/c} \ar[d]^{\varphi_c} \\
{\calc/d} \ar[rr]^{\varphi_d} && {\calc}
}\]
Thus we may identify $\varphi_d^* F$ with $\varphi_f^*\varphi_c^* F$, and we obtain
\[rF(d)=\cyl(\varphi_d^* F) \cong \cyl(\varphi_f^*\varphi_c^* F).\]
It now follows from Proposition \ref{mapping_cylinder_as_a_colimit} that
\[\colim_{(f\colon d\to c)\in\partial c} rF(d) = \colim_{(f\colon d\to c)\in\partial c} \cyl(\varphi_f^*\varphi_c^* F) \cong \cyl((\varphi_c^* F)\vert_{\partial c}).\]
On the other hand it follows from Lemma \ref{lemma1} that the inclusion
\[\cyl((\varphi_c^* F)\vert_{\partial c})\to \cyl(\varphi_c^* F)=rF(c)\]
is a cofibration.
\end{proof}

\section{The homotopy colimit over finite posets}\label{section:geometric_properties}

Throughout this section, let $P$ be a finite poset. $P$ will be considered as a small category with object set $P$, where there is a unique morphism from $x$ to $y$ if and only if $x\leq y$.

\begin{proposition}\label{prop:cylinder_is_compact_ENR}
Let $F$ be a \hc diagram of shape $P$, such that $F(x)$ is a compact ENR for all $x\in P$. Then $\cyl(F)$ is also a compact ENR.
\end{proposition}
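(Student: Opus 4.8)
The plan is to build $\cyl(F)$ by induction on the number of elements of $P$, using the coequalizer formula for the homotopy colimit together with the standard fact that compact ENRs are closed under the operations we need (finite products, mapping cylinders of maps between compact ENRs, and pushouts along closed cofibrations of compact ENRs). The base case $P=\emptyset$ gives $\cyl(F)=\emptyset$, and the case of a one-point poset gives $\cyl(F)=F(\ast)$ by Lemma~\ref{lemma1}, which is a compact ENR by hypothesis. For the inductive step, choose a maximal element $x\in P$; let $P' = P\setminus\{x\}$, and let $\partial x\subset P'$ be the full subposet of elements $y<x$. The idea is to isolate the contribution of $x$ as a mapping cylinder and glue it onto $\cyl(F\vert_{P'})$, so that $\cyl(F)$ is expressed as a pushout of compact ENRs along a closed cofibration.

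Concretely, I would first note that the coequalizer formula
\[
\cyl(F)=\vert\phi_\bullet\vert\otimes_{\vert\calf_\bullet P\vert}F
\]
decomposes according to the object $c\in P$ that indexes the terms $\vert\phi_\bullet(c)\vert\times F(c)$. Since $x$ is maximal, $\calf_\bullet\hat P(x,\ast)$ is a point (there is a unique morphism $x\to\ast$ and it admits no non-trivial factorization, as any factorization would pass through an object $>x$), so the ``$c=x$'' contribution is just a copy of $F(x)$. The morphisms into $x$ that appear in the coequalizer relations are exactly indexed, coherently, by $\calc/x$ restricted to $\partial x$, i.e.\ by the \hc diagram $\varphi_x^*F$ restricted to $\partial x$. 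Unwinding this, $\cyl(F)$ is obtained from the disjoint union $\cyl(F\vert_{P'})\sqcup F(x)$ by gluing in a copy of $\cyl((\varphi_x^*F)\vert_{\partial x})\times I$ — more precisely, $\cyl(F)$ is the pushout
\[
\cyl(F)\;\cong\;\cyl(F\vert_{P'})\;\cup_{\,\cyl((\varphi_x^*F)\vert_{\partial x})}\;\Cyl\bigl(\cyl((\varphi_x^*F)\vert_{\partial x})\to F(x)\bigr),
\]
where $\Cyl$ denotes the ordinary mapping cylinder and the map $\cyl((\varphi_x^*F)\vert_{\partial x})\to F(x)$ comes from the universal property together with $F(x)=F\vert_{\calc/x}(\id_x)$. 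The inclusion $\cyl((\varphi_x^*F)\vert_{\partial x})\hookrightarrow\cyl(F\vert_{P'})$ used in the pushout is a closed cofibration: this follows from Lemma~\ref{lemma3} (or directly from the identification in Corollary~\ref{cor:standard_rectification_is_cofibrant}, replacing $\calc$ there by $P'$ and $c$ by a suitable cone). Since $\partial x$ and $P'$ each have strictly fewer elements than $P$, the inductive hypothesis applies to them; hence $\cyl(F\vert_{P'})$ and $\cyl((\varphi_x^*F)\vert_{\partial x})$ are compact ENRs, and $\Cyl$ of a map between compact ENRs is a compact ENR, so the pushout $\cyl(F)$ is a compact ENR as the union of two compact ENRs glued along a compact ENR which is closed in each.

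The main obstacle will be establishing the pushout decomposition rigorously, i.e.\ verifying that removing a maximal object splits the coequalizer into exactly the mapping-cylinder attachment described above, with the gluing occurring along a \emph{closed} subspace. The bisimplicial/coequalizer bookkeeping is where all the work lies; once the decomposition and the closedness of the gluing locus are in hand, the ENR conclusion is formal, using that the class of compact ENRs is closed under finite products, mapping cylinders of maps between compact ENRs, and pushouts along closed cofibrations between compact ENRs (a pushout of ENRs along a closed cofibration is an ENR, and compactness is preserved). I would also need to record, as an auxiliary observation, that $\vert\calf_\bullet\hat P(c,\ast)\vert$ and the various $\vert\calf_\bullet P(c,d)\vert$ are compact (they are finite simplicial sets since $P$ is finite), so that all the spaces entering the coequalizer are compact; this guarantees compactness is not lost when passing to the quotient.
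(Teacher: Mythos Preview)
Your approach is essentially the paper's own: induct (you on $|P|$, the paper on $\dim P$), peel off maximal elements, express $\cyl(F)$ as a pushout in which one leg is a mapping cylinder via Lemma~\ref{lemma1}, and conclude by the closure of compact ENRs under adjunction. The pushout decomposition you are ``unwinding from the coequalizer'' is exactly what the paper records as Lemma~\ref{lemma2} (removing all maximal elements simultaneously rather than one at a time; the difference is cosmetic, and indeed the paper later remarks that the one-at-a-time version works too).

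One correction: your appeal to Lemma~\ref{lemma3} for the cofibration $\cyl(F\vert_{\partial x})\hookrightarrow\cyl(F\vert_{P'})$ is misplaced---that lemma is about homotopy equivalences, not cofibrations, and Corollary~\ref{cor:standard_rectification_is_cofibrant} does not say this either. But you do not need that leg of the pushout to be a cofibration: the \emph{other} leg, the inclusion of $\cyl(F\vert_{\partial x})$ as the top of the mapping cylinder $\Cyl\bigl(\cyl(F\vert_{\partial x})\to F(x)\bigr)=\cyl(F\vert_{\bar x})$, is manifestly a closed cofibration, and that suffices for the adjunction-space ENR argument. This is precisely how the paper proceeds.
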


This proposition will be proved by induction over the dimension of $P$, which by definition is the largest number $n$ such that there is a strictly increasing sequence
\[x_0\lneqq x_1\lneqq\dots\lneqq x_n.\]
(Thus the dimension of $P$ is just the dimension of the geometric realization of $P$.) To carry out the induction, we need the following result, in which $M\subset P$ denotes the subset of maximal elements, and, for $x\in P$, the symbol $\bar x$ denotes the subset of $P$ of all elements less than or equal to $x$. 

\begin{lemma}\label{lemma2}
Let $F$ be a \hc diagram of shape $P$. An extension of $F$ over the cone $\hat P$ determines and is determined by extensions of $F\vert_{P-M}$ and $F\vert_{\bar m-m}$, for $m\in M$, over the respective cones that agree over their common intersection. In other words, the inclusion-induced diagram 
\[\xymatrix{
{\coprod_{m\in M} \cyl(F\vert_{\bar m- \{m\}})} \ar[rr] \ar@{_{(}->}[d] 
  && {\cyl (F\vert_{P-M})} \ar@{_{(}->}[d]\\
{\coprod_{m\in M} \cyl(F\vert_{\bar m})} \ar[rr]
 && {\cyl (F)}
}\]
is a push-out (where the vertical maps are cofibrations).
\end{lemma}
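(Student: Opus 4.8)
The plan is to reduce the statement to a purely formal fact about ordinary colimits over $P$, using the identification of the homotopy colimit with the colimit of its standard rectification (Proposition~\ref{mapping_cylinder_as_a_colimit}). First I would record that for a poset $Q$ and an object $c$, the over-category $Q/c$ is canonically the down-set $\{x\in Q:x\leq c\}$, so $\varphi_c^*F=F\vert_{\bar c}$ and $rF(c)=\cyl(F\vert_{\bar c})$; Proposition~\ref{mapping_cylinder_as_a_colimit} then gives a natural homeomorphism $\colim_{c\in P}\cyl(F\vert_{\bar c})\cong\cyl(F)$. The key observation is that this applies equally to any \emph{down-set} $D\subseteq P$ in place of $P$: for $c\in D$ the down-set of $c$ formed inside $D$ is again $\bar c$, so $\cyl(F\vert_D)\cong\colim_{c\in D}\cyl(F\vert_{\bar c})$, compatibly with the maps induced by inclusions of down-sets. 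Since $P-M$, each $\bar m$ and each $\bar m-m$ are down-sets in $P$ (for $P-M$: in a chain $x\lneqq y$ the element $x$ is not maximal), the square in the statement is thereby identified with the corresponding square of colimits of the single diagram $c\mapsto\cyl(F\vert_{\bar c})$ over these subposets.

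It then suffices to prove the following Mayer--Vietoris statement for an arbitrary strict diagram $G\colon P\to\mathbf{Top}$ (to be applied to $G=rF$): if $U_0:=P-M$ and $U_m:=\bar m$ for $m\in M$, then $\colim_PG$ is the pushout of $\colim_{U_0}G\leftarrow\coprod_m\colim_{U_0\cap U_m}G\to\coprod_m\colim_{U_m}G$. The $U_0$ and $U_m$ cover $P$ by down-sets, so every object of $P$ lies in one of them and every morphism lies within one of them (if $y\in U_i$ and $x\leq y$ then $x\in U_i$). Moreover $U_0\cap U_m=\bar m-m$, while for $m\neq m'$ one has $U_m\cap U_{m'}\subseteq\bar m-m$ because $m\notin\bar{m'}$. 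Hence, by the universal property of the colimit, a cocone from $G$ to a space $Z$ is the same datum as a compatible family of cocones on the $G\vert_{U_i}$; and compatibility of the $G\vert_{U_m}$ with one another is automatic as soon as each is compatible with $G\vert_{U_0}$, since $U_m\cap U_{m'}$ lies in both $U_0\cap U_m$ and $U_0\cap U_{m'}$. Thus $\colim_PG$ and the asserted pushout corepresent the same functor, which gives the push-out claim.

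For the parenthetical assertion about cofibrations, I would argue that $\coprod_m\cyl(F\vert_{\bar m-m})\to\coprod_m\cyl(F\vert_{\bar m})$ is a cofibration, whence the parallel map $\cyl(F\vert_{P-M})\to\cyl(F)$ is a cofibration as its cobase change. The first map is a coproduct of the maps $\cyl(F\vert_{\bar m-m})\to\cyl(F\vert_{\bar m})$, and each of these is a cofibration by Lemma~\ref{lemma1}: applied with $\widehat{\bar m-m}=\bar m$, that lemma identifies $\cyl(F\vert_{\bar m})$ with the mapping cylinder of the map $\cyl(F\vert_{\bar m-m})\to F(m)$, realizing $\cyl(F\vert_{\bar m-m})$ as the inclusion of the source end, which is a closed cofibration.

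I expect the only genuine point requiring care to be the combinatorial bookkeeping in the first two steps: checking that the chosen members of the cover are down-sets, and the containment $\bar m\cap\bar{m'}\subseteq P-M$, since it is precisely this containment that allows the homotopy colimit to be described as a pushout indexed by $\coprod_m$ rather than by the full nerve of the cover. If one prefers not to invoke Proposition~\ref{mapping_cylinder_as_a_colimit}, the same universal-property argument can be carried out directly on homotopy coherent diagrams, using that a formal morphism $c\to *$ in $\calf_\bullet\hat P$ has a well-defined top vertex in $P$ and lies in $\calf_\bullet\widehat{P-M}$ exactly when that vertex is non-maximal, and in $\calf_\bullet\widehat{\bar m}$ exactly when it equals $m$; this route requires an explicit description of $\calf_\bullet\hat P$ in the spirit of the proof of Lemma~\ref{lemma1} and is the more laborious alternative.
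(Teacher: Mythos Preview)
Your argument is correct, but it follows a genuinely different route from the paper. The paper derives Lemma~\ref{lemma2} from a more general statement (Lemma~\ref{lemma2_generalized}): whenever a square of small categories induces a push-out on nerves and the functors preserve non-identity morphisms, the induced square of homotopy colimits is a push-out. That lemma is proved by analyzing $\calf_\bullet$ directly---one observes that an element of $N_m\calf_n\calc$ is determined by its underlying string in $\coprod_k N_k\calc$ together with the parenthesization pattern, so the nerve push-out for $\calc$ lifts to $\calf_\bullet\calc$, and then the coequalizer description of $\cyl$ decomposes accordingly. By contrast, you bypass $\calf_\bullet$ entirely: you invoke Proposition~\ref{mapping_cylinder_as_a_colimit} to replace $\cyl$ by an ordinary colimit of the rectification $rF$, note that $(rF)\vert_D=r(F\vert_D)$ for down-sets $D$ because $P/c=\bar c$ is unchanged, and then run a clean Mayer--Vietoris argument for ordinary colimits over the down-set cover $\{P-M\}\cup\{\bar m\}_{m\in M}$. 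Your approach is more elementary and exploits the poset structure (down-sets) directly, at the cost of being less general and of depending on Proposition~\ref{mapping_cylinder_as_a_colimit}; the paper's approach yields a reusable statement about arbitrary categories but requires the combinatorial analysis of $\calf_\bullet$. Your treatment of the cofibration clause via Lemma~\ref{lemma1} coincides with what the paper does implicitly (and later makes explicit in the proof of Corollary~\ref{cor:standard_rectification_is_cofibrant}).
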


This Lemma is an immediate consequence of the following more general statement:

\begin{lemma}\label{lemma2_generalized}
Suppose that
\begin{equation}\label{eq:diagram_of_categories}
\xymatrix{
{\calc_0} \ar[rr]^{f_1} \ar[d]^{f_2} && {\calc_1} \ar[d]^{g_1}\\
{\calc_2} \ar[rr]^{g_2} && {\calc}
}
\end{equation}
is a commutative diagram of small categories which induces a push-out diagram on the level of the nerves. Suppose moreover that all the functors appearing in \eqref{eq:diagram_of_categories} send non-identity morphisms to non-identity morphisms. Then, for a \hc diagram $F\colon \calf_\bullet\calc\to\mathbf{Top}$, the induced square
\[\xymatrix{
{\cyl(g_0^* F)} \ar[rr] \ar[d]
 && {\cyl(g_1^* F)} \ar[d] \\
{\cyl(g_2^* F)} \ar[rr]
 && {\cyl(F)}
}\]
is a push-out, where $g_0=g_1\circ f_1=g_2\circ f_2$.
\end{lemma}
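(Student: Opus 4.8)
The plan is to verify the pushout property directly via the universal property of $\cyl$, reducing everything to a statement about the simplicial sets $\phi_\bullet$ of the various cones, and ultimately to the hypothesis that \eqref{eq:diagram_of_categories} is a pushout on nerves. Recall from Section~\ref{section:hocolim} that an extension of a \hc diagram $H$ of shape $\cald$ over the cone $\hat\cald$ with value $Z$ at the cone point is the same as a natural transformation $\phi_\bullet^{\cald}(d)\to\map(H(d),Z)$, where $\phi_\bullet^{\cald}(d)=\calf_\bullet\hat\cald(d,*)$; equivalently, a map $\cyl(H)\to Z$. So to show the square of homotopy colimits is a pushout, it suffices to show: for every space $Z$, giving a map $\cyl(F)\to Z$ is the same as giving maps $\cyl(g_1^*F)\to Z$ and $\cyl(g_2^*F)\to Z$ that agree after restriction to $\cyl(g_0^*F)$. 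Unwinding the universal property on all four corners, this amounts to showing that a natural transformation $\phi_\bullet^{\calc}(c)\to\map(F(c),Z)$ (natural over $\calf_\bullet\calc$) is the same datum as a compatible pair of natural transformations over $\calf_\bullet\calc_1$ and $\calf_\bullet\calc_2$.

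The key step is therefore a combinatorial claim at the level of the free categories: the commutative square
\[\xymatrix{
{\calf_\bullet\widehat{\calc_0}} \ar[r] \ar[d] & {\calf_\bullet\widehat{\calc_1}} \ar[d] \\
{\calf_\bullet\widehat{\calc_2}} \ar[r] & {\calf_\bullet\hat\calc}
}\]
is, in each simplicial degree $n$ and for each pair of objects, a pushout of sets; and moreover the left and top maps are injective (so that the pushout is a genuine gluing along a subobject, giving the cofibrations in the statement). Here I use crucially the hypothesis that all four functors in \eqref{eq:diagram_of_categories} preserve non-identity morphisms, which guarantees that the functors $\calf_\bullet$ of them are faithful on the relevant hom-sets and that adjoining a cone point behaves well. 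Granting this claim, a morphism in $\calf_n\hat\calc(c,*)$ is a word of words of $\dots$ of morphisms of $\hat\calc$; since $\hat\calc = \calc \cup \{*\}$ and the morphisms of $\calc$ are, by the nerve pushout hypothesis, each either a morphism of $\calc_1$ or a morphism of $\calc_2$ (compatibly on $\calc_0$), every such word is either entirely built from $\calc_1$-morphisms (plus the cone morphisms) or entirely from $\calc_2$-morphisms — a word cannot ``mix'' the two sides, because composability in a poset-like pushout forces a chain lying over a chain in $N\calc$, which lies in $N\calc_1$ or $N\calc_2$. This is exactly the content of the nerve being a pushout, promoted one categorical level up by applying $\calf_\bullet$.

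With the combinatorial claim in hand, the rest is formal: a natural transformation on $\phi_\bullet^{\calc}$ is determined by its values on one-letter words $\calf h$ for $h$ a morphism of $\hat\calc$ (together with compatibility), each such $h$ lies on the $\calc_1$- or $\calc_2$-side, and the two restrictions agree precisely on the $\calc_0$-part; naturality and the degeneracy relations match up automatically since they are detected on generators. This shows the square of cone-extensions is a pullback of sets for every $Z$, which is the universal property characterizing $\cyl(F)$ as the pushout $\cyl(g_1^*F)\cup_{\cyl(g_0^*F)}\cyl(g_2^*F)$; the injectivity part of the combinatorial claim yields that the vertical maps are cofibrations (the coequalizer presentation of $\cyl$ from the proof of Proposition above turns a degreewise-injective simplicial map into a closed inclusion after realization, and one checks the relevant CW/retract structure). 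Finally, Lemma~\ref{lemma2} follows by taking $\calc=\hat P$, $\calc_1 = \widehat{P-M}$, $\calc_2 = \coprod_{m\in M}\hat{\bar m}$ glued along $\calc_0 = \coprod_{m}\widehat{\bar m - \{m\}}$ — this is a pushout on nerves since $P = (P-M)\cup\bigcup_m \bar m$ with the stated intersections, and all inclusions preserve non-identity morphisms.

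The main obstacle I anticipate is the combinatorial claim that $\calf_\bullet$ applied to a nerve-pushout of categories (with the non-degeneracy hypothesis) is again degreewise a pushout: one must rule out ``interleaved'' composable words whose entries come from both $\calc_1$ and $\calc_2$, and argue that the cone point $*$ does not create new such interleavings. The non-identity-preservation hypothesis is what makes this work — without it, an identity morphism could be simultaneously in both images and the analysis of words breaks down — but checking it carefully across all the nested word-parentheses of $\calf_\bullet = \calf^{\bullet+1}$ is the technical heart of the argument.
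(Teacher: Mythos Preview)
Your approach is essentially the same as the paper's: reduce the pushout of homotopy colimits, via the universal property, to a combinatorial pushout statement about the morphism sets $\calf_\bullet\hat\calc_i(c,*)$, and deduce this from the nerve-pushout hypothesis. The one substantive difference is in how the ``main obstacle'' you flag --- showing that $\calf_\bullet$ applied to the hatted square is degreewise a pushout --- is dispatched.

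The paper does not argue directly that words cannot mix. Instead it introduces the projection
\[
\alpha\colon N_m\calf_n\calc \longrightarrow \coprod_{k\in\IN} N_k\calc
\]
that forgets all $n{+}1$ levels of parentheses, and observes that for any functor $f$ preserving non-identity morphisms, the square comparing $N_m\calf_n f$ with $\coprod_k N_k f$ is a \emph{pullback}. The pushout of $N_m\calf_\bullet$ then follows formally from the pushout of nerves by the elementary fact that pulling back along any map of sets preserves pushouts. This replaces your anticipated inductive analysis of nested parentheses with a one-line argument; it also makes transparent why the non-identity hypothesis is needed (it is exactly what makes $\alpha$ natural with respect to $f$). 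The paper then finishes as you outline: specializing the nerve-pushout for $\calf_\bullet\hat\calc$ to $m=1$ gives the determination of the map on $\coprod_c\vert\phi_\bullet(c)\vert\times F(c)$, and to $m=2$ checks compatibility with the $\calf_\bullet\calc$-action, so that the coequalizer description of $\cyl$ yields the pushout.

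One small slip: in your derivation of Lemma~\ref{lemma2} you should take $\calc=P$, $\calc_1=P-M$, $\calc_2=\coprod_m\bar m$, $\calc_0=\coprod_m(\bar m-\{m\})$; the cones $\hat{(-)}$ arise inside the proof, not in the hypotheses.
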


To prove Lemma \ref{lemma2_generalized}, we will make use of the following construction: Recall that an element in $\calf_n\calc(c,d)$ may be seen as a word of non-identity morphisms of $\calc$ together with $n+1$ levels of parentheses. It follows that for each pair of natural numbers $n,m$ there is a projection map
\[\alpha\colon N_m \calf_n\calc\to \coprod_{k\in\IN} N_k\calc\]
defined on the $m$-nerve by forgetting the parentheses. It is easy to verify the following statement:

\begin{lemma}
If $f\colon\calc\to\cald$ is a functor that sends non-identity morphisms to non-identity morphisms, then the square
\[\xymatrix{
{N_m\calf_n\calc} \ar[d]^\alpha \ar[rr]^f 
&& {N_m\calf_n\cald} \ar[d]^\alpha
\\
{\coprod_{k\in\IN} N_k\calc} \ar[rr]^f
&& {\coprod_{k\in\IN} N_k\cald}  
}\]
is a pull-back.
\end{lemma}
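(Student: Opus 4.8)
The plan is to unwind the definitions into an explicit combinatorial model for the set $N_m\calf_n\calc$ and to read the pull-back property off that model. Recall that a morphism of $\calf_n\calc = \calf^{n+1}\calc$ is a word whose innermost letters are non-identity morphisms of $\calc$, equipped with $n+1$ nested levels of parentheses, and that an $m$-simplex of $N_\bullet\calf_n\calc$ is a chain of $m$ composable such morphisms, i.e.\ the same data together with one further subdivision of the word into $m$ consecutive (possibly empty) blocks. My claim is that such an $m$-simplex is the same thing as a triple $(k,z,\beta)$ consisting of a natural number $k\in\IN$; a chain $z=(c_0\xrightarrow{g_1}c_1\to\dots\xrightarrow{g_k}c_k)$ of \emph{non-identity} morphisms of $\calc$, i.e.\ an element of $N_k\calc$ lying in the image of $\alpha$; and a purely combinatorial ``bracketing'' $\beta$ on the ordered set $\{1,\dots,k\}$ --- a formally valid system of $n+1$ nested parentheses together with the subdivision into $m$ blocks --- whose validity depends only on $k$, $n$ and $m$, and not on the morphisms $g_i$. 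The only way the morphisms enter a morphism of $\calf_n\calc$ at all is through composability, and this is captured entirely by the condition $z\in N_k\calc$: every composability constraint at every level of $\calf^{n+1}\calc$ reduces to the requirement that consecutive bottom letters compose in $\calc$.

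Granting this model, the lemma is immediate. The map $\alpha$ is ``forget $\beta$'', $(k,z,\beta)\mapsto z$. The map induced by $f$ applies $f$ letterwise, $(k,z,\beta)\mapsto(k,f\circ z,\beta)$ --- here one uses the hypothesis that $f$ carries non-identity morphisms to non-identity morphisms, which guarantees that no bottom letter is collapsed, so that $k$ and $\beta$ pass through unchanged and the square commutes. Finally $\coprod_k N_k\calc\to\coprod_k N_k\cald$ is $z\mapsto f\circ z$. An element of the fibre product $N_m\calf_n\cald\times_{\coprod_k N_k\cald}\coprod_k N_k\calc$ is thus a pair $\bigl((k,z',\beta'),\,z\bigr)$ with $f\circ z=z'$; since $z'=\alpha$ of something consists of non-identity morphisms and $f$ preserves identities, $z$ must consist of non-identity morphisms, and since the validity of $\beta'$ depends only on $k$, $n$, $m$, the triple $(k,z,\beta')$ is a legitimate element of $N_m\calf_n\calc$. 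The assignment $\bigl((k,z',\beta'),z\bigr)\mapsto(k,z,\beta')$ is then a two-sided inverse to the canonical comparison map, so the square is a pull-back.

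The single point that needs genuine care --- and where I expect to spend most of the work --- is the justification of the combinatorial model: that the bracketing datum $\beta$ is independent of the chosen morphisms, and conversely that every triple $(k,z,\beta)$ with $z$ a composable chain of non-identity morphisms of $\calc$ and $\beta$ formally valid arises from an honest $m$-simplex of $N_\bullet\calf_n\calc$. I would prove this by induction on $n$, using that $\calf_n\calc=\calf(\calf_{n-1}\calc)$ and that the morphisms of $\calf(-)$ are precisely the (possibly empty) words of non-identity morphisms of $(-)$: each additional application of $\calf$ contributes exactly one more combinatorial layer --- a grouping of positions into consecutive \emph{non-empty} blocks --- and imposes no condition on the bottom letters beyond the composability already present; the extra subdivision into $m$ blocks coming from the nerve is of the same nature, except that those blocks are allowed to be empty. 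Once this bookkeeping is in place, the two routes around the square manifestly agree and the pull-back property follows as above.
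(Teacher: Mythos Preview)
Your proposal is correct and is precisely the verification the paper has in mind: the paper does not actually prove this lemma, merely introducing $\alpha$ as the map that ``forgets the parentheses'' and declaring the statement ``easy to verify''. Your combinatorial model $(k,z,\beta)$ makes that verification explicit, and the one substantive point --- that the hypothesis on $f$ ensures no bottom letter collapses, so $k$ and $\beta$ survive unchanged and any lift $z$ of $z'$ automatically consists of non-identity morphisms --- is exactly the content of the lemma.
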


\begin{corollary}\label{cor:corollary_in_proof_of_lemma2} 
Under the assumptions of Lemma \ref{lemma2_generalized}, for each $m$ the following diagram is a push-out:
\[\xymatrix{
N_m \calf_\bullet \calc_0 \ar[rr]^{f_1} \ar[d]^{f_2} 
  && N_m\calf_\bullet\calc_1 \ar[d]^{g_1}\\
N_m \calf_\bullet \calc_2 \ar[rr]^{g_2} 
  && N_m\calf_\bullet\calc
}\]
\end{corollary}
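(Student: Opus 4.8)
Corollary \ref{cor:corollary_in_proof_of_lemma2} asks us to promote the levelwise pushout on nerves of categories to a pushout on nerves of the free-category resolutions $\calf_\bullet$, in each bisimplicial degree $m$. Here is the plan.

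The strategy is to reduce the claim to a statement about sets equipped with maps to $\coprod_k N_k\calc$, using the projection map $\alpha$ that forgets parentheses. First I would fix $m$ and work simplicial-degree-wise: a pushout of simplicial sets is computed degreewise, so it suffices to show that for every bisimplicial degree $(m,n)$ the square of sets
\[\xymatrix{
N_m\calf_n\calc_0 \ar[r] \ar[d] & N_m\calf_n\calc_1 \ar[d]\\
N_m\calf_n\calc_2 \ar[r] & N_m\calf_n\calc
}\]
is a pushout of sets. Since $\varepsilon\colon\calf_\bullet\to\calc$ is augmented and all maps in \eqref{eq:diagram_of_categories} send non-identities to non-identities (so that the preceding Lemma applies), we have, for each $n$ and $m$, a commutative cube whose bottom face is the square
\[\xymatrix{
\coprod_k N_k\calc_0 \ar[r] \ar[d] & \coprod_k N_k\calc_1 \ar[d]\\
\coprod_k N_k\calc_2 \ar[r] & \coprod_k N_k\calc
}\]
and whose vertical faces are the pull-back squares supplied by that Lemma (applied to each of $f_1, f_2, g_1, g_2$). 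The bottom square is a pushout of sets: it is obtained from the pushout of nerves $N_\bullet\calc_0\to N_\bullet\calc_i\to N_\bullet\calc$, taken in bidegree $m$ (which preserves pushouts since colimits of simplicial sets are computed degreewise), and then summed over $k\in\IN$ (a coproduct, which preserves pushouts). Moreover, because each functor in \eqref{eq:diagram_of_categories} is injective on non-identity morphisms, the bottom maps are injective — indeed the bottom square is a pushout along a monomorphism, hence also a pullback — so the pushout is ``well-behaved''.

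Now the key point: given a square of sets in which the bottom face is a pushout along injections and the vertical faces are pullbacks, the top face is again a pushout. Concretely, $N_m\calf_n\calc$ is the union of the images of $N_m\calf_n\calc_1$ and $N_m\calf_n\calc_2$ (since every word of $\calc$-morphisms with $m$ levels of parentheses projects under $\alpha$ into one of the pieces $\coprod_k N_k\calc_i$ of the bottom pushout, and the full parenthesization is recovered because $\alpha$ is cartesian over the inclusion), and the overlap of these two images is exactly the image of $N_m\calf_n\calc_0$ (by the pullback property of the faces and the fact that $N_m\calf_n\calc_0 \to N_m\calf_n\calc_1$, $N_m\calf_n\calc_2$ are injections, which follows from injectivity of $f_1, f_2$ on non-identity morphisms together with the pullback squares). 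Chasing this through, the canonical map from the set-pushout $N_m\calf_n\calc_1 \cup_{N_m\calf_n\calc_0} N_m\calf_n\calc_2$ to $N_m\calf_n\calc$ is a bijection. Reassembling over $n$ gives the pushout of simplicial sets claimed.

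The main obstacle is the surjectivity half of the last step: showing that every element of $N_m\calf_n\calc$ lies in the image of one of the two upper corners. This is where the hypothesis that the functors preserve non-identities is essential — it guarantees that the parenthesized word, viewed through $\alpha$, lands in a piece of the bottom pushout coming from $\calc_1$ or $\calc_2$, and then the pullback property of the vertical faces lifts that element uniquely back up, parentheses and all. I expect the bookkeeping with the indexing over $k\in\IN$ and the compatibility of $\alpha$ with the simplicial structure in the $m$-direction to be routine but slightly fiddly; the conceptual content is entirely in the ``pullback faces + pushout-along-mono base $\Rightarrow$ pushout top'' lemma for sets, which can be verified directly by a short diagram chase.
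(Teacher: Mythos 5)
Your proposal follows essentially the same route as the paper: reduce to a statement about sets in each bidegree, use the preceding pull-back lemma for the parenthesis-forgetting map $\alpha$ to realize each $N_m\calf_n\calc_i$ as the pull-back of $N_m\calf_n\calc$ over $\coprod_k N_k\calc_i$, and then deduce the push-out upstairs from the push-out downstairs. The one point where you overstep the hypotheses is the injectivity claim: Lemma \ref{lemma2_generalized} only assumes that the functors send non-identity morphisms to non-identity morphisms and that the nerves form a push-out; neither condition implies that $f_1,f_2$ (or the induced maps of nerves) are injective, so your reduction to ``push-out along a monomorphism, hence union with prescribed overlap'' is not justified in the stated generality. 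The fix is to drop injectivity altogether and use the stronger set-theoretic fact that the paper invokes: for any map of sets $p\colon E\to B$ and any push-out decomposition $B=B_1\sqcup_{B_0}B_2$, the set $E$ is the push-out of the pull-backs $E\vert_{B_1}$ and $E\vert_{B_2}$ along $E\vert_{B_0}$ (base change preserves colimits in sets). Your surjectivity argument is exactly the surjectivity half of this fact and is fine; only the identification of the overlap needs the colimit-preservation statement rather than an intersection of injective images. In the application to Lemma \ref{lemma2} the functors are inclusions of full subposets, so your version does suffice there, but as a proof of the corollary as stated it has this small gap.
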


\begin{proof}[Proof of Corollary]
It is enough to show that statement holds in each simplicial degree $n$. This is a consequence of the lemma together with the fact that if a set $B$ is a push-out of $B_1$ and $B_2$ along $B_0$, and $p\colon E\to B$ is a map of sets, then $E$ is a push-out of the corresponding pull-backs $E\vert_{B_1}$ and $E\vert_{B_2}$ along $E\vert_{B_0}$.
\end{proof}

\begin{proof}[Proof of Lemma \ref{lemma2_generalized}]
First notice that if \eqref{eq:diagram_of_categories} induces a push-out on the nerves, then so does the ``extended'' version
\[\xymatrix{
{\hat\calc_0} \ar[rr]^{f_1} \ar[d]^{f_2} && {\hat\calc_1} \ar[d]^{g_1}\\
{\hat\calc_2} \ar[rr]^{g_2} && {\hat \calc}  
}\]
Thus we may apply Corollary \ref{cor:corollary_in_proof_of_lemma2} to this diagram. Specializing to $m=1$ and looking only at the part of the 1-nerve of those maps that end at the terminal object, one sees that a map
\[a\colon \coprod_{c\in\calc} \vert \calf_\bullet\hat\calc(c,*)\vert \times F(c) \to Z\]
is uniquely determined by its restrictions
\[a_i\colon \coprod_{c_i\in\calc_i} \vert\calf_\bullet\hat\calc_i(c_i,*)\vert\times F(g_i(c_i)) \to Z.\]
The map $a$ determines an extension of $F$ to a homotopy coherent diagram if and only if $a$ is compatible with the $\calc$-operation, i.e.~factors over the coequalizer of
\[\coprod_{c,d\in \calc} \vert\calf_\bullet\hat \calc(d,*)\vert\times\vert\calf_\bullet\calc(c,d)\vert\times F(c)
\rightrightarrows\coprod_{c\in\calc} \vert\calf_\bullet\hat\calc(c,*)\vert\times F(c).\] 
This follows again from Corollary \ref{cor:corollary_in_proof_of_lemma2}, applied to $m=2$, since the first two factors in the left-hand side may again be expressed in terms of (a simplicial subset of) the 2-nerve of $\calf_\bullet \hat\calc$.
\end{proof}

\begin{proof}[Proof of Proposition \ref{prop:cylinder_is_compact_ENR}]
If $\dim(P)=0$, then $\cyl(F)$ is a finite coproduct of compact ENRs and thus a compact ENR itself (this follows from \cite[Theorem  E.3]{Bredon(1993)}). 

For the inductive step, let $\dim(P)=n$ and suppose that the statement holds for all posets of dimension less than $n$. Consider the push-out diagram as given by Lemma \ref{lemma2}:
\[\xymatrix{
{\coprod_{m\in M} \cyl(F\vert_{\bar m- \{m\}})} \ar[rr] \ar[d] 
  && {\cyl (F\vert_{P-M})} \ar[d]\\
{\coprod_{m\in M} \cyl(F\vert_{\bar m})} \ar[rr]
 && {\cyl (F)}
}\] 
By the inductive assumption, the space in the upper right-hand corner is a compact ENR. Lemma \ref{lemma1} shows that the space appearing in the lower left corner is the (ordinary) mapping cylinder of the map
\[\coprod_{m\in M}\cyl(F\vert_{\bar m-\{m\}}) \to \coprod_{m\in M} F(m)\]
between compact ENRs. Thus it is a compact ENR by \cite[Corollary E.7]{Bredon(1993)}. Moreover the inclusion of $\coprod\cyl(F\vert_{\bar m-\{m\}})$ into $\coprod \cyl(F\vert_{\bar m})$ is a cofibration. Hence, by \cite[chapter VI, \S 1]{Hu(1965)} the adjunction space $\cyl(F)$ is also a compact ENR. 
\end{proof}

\begin{corollary}\label{cor:rectification_with_ENR_property}
Let $F$ be a \hc diagram of shape $P$, such that for all $x\in P$, the space $F(x)$ is homotopy equivalent to a compact ENR. Then $F$ has a rectification $G$ such that for all $x\in P$, $G(x)$ is a compact ENR.
\end{corollary}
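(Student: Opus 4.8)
The plan is to reduce to the case in which $F$ already takes values in compact ENRs, and then to feed that situation into the standard rectification constructed above together with Proposition~\ref{prop:cylinder_is_compact_ENR}. So first I would choose, for each $x\in P$, a compact ENR $K_x$ together with a homotopy equivalence $f_x\colon F(x)\to K_x$, and then produce a \hc diagram $F'$ of shape $P$ with $F'(x)=K_x$ for all $x$, together with a \hc transformation $F\Rightarrow F'$ restricting to $f_x$ over each object. This is a transport-of-structure statement: thinking of a \hc diagram as a map from the nerve of $P$ into the homotopy coherent nerve of $\mathbf{Top}$, the equivalences $f_x$ constitute a partial map out of $P\times[1]$ (defined on $P\times\{0\}$ together with the edges lying over the objects), and the weak Kan property of the homotopy coherent nerve of $\mathbf{Top}$ — already invoked above — allows one to extend this to a \hc diagram of shape $P\times[1]$ whose restrictions are as prescribed; this is in the same spirit as the proof of the Cordier--Porter theorem recalled above. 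Since each $f_x$ is a homotopy equivalence, part~(i) of that theorem shows that the resulting \hc transformation $F\Rightarrow F'$ is invertible in $\Coh(P,\mathbf{Top})$.

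Next I would apply the standard rectification of the previous section to $F'$, obtaining a strict diagram $rF'$ with $rF'(x)=\cyl(\varphi_x^*F')$ and a level homotopy equivalence $F'\Rightarrow rF'$, hence an isomorphism in $\Coh(P,\mathbf{Top})$. Because $P$ is a finite poset, so is $P/x$ (it is canonically isomorphic to the sub-poset $\{d\in P : d\leq x\}$), and $\varphi_x^*F'$ sends every one of its objects $d$ to $F'(d)=K_d$, a compact ENR. Thus Proposition~\ref{prop:cylinder_is_compact_ENR} applies and shows that $rF'(x)=\cyl(\varphi_x^*F')$ is a compact ENR, for every $x\in P$. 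Finally, setting $G:=rF'$ and composing in $\Coh(P,\mathbf{Top})$ the invertible \hc transformation $F\Rightarrow F'$ with the invertible one $F'\Rightarrow rF'$ yields an invertible \hc transformation $F\Rightarrow G$ with $G$ strict and $G(x)$ a compact ENR for all $x$; that is, the required rectification.

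The step I expect to be the genuine obstacle is the transport-of-structure in the first paragraph, since it is the only point not formally subsumed by earlier results: one must organize the horn-filling against the (weakly Kan) homotopy coherent nerve of $\mathbf{Top}$ carefully enough that the values of $F'$ on the objects of $P$ come out to be exactly the chosen $K_x$, rather than merely homotopy equivalent to them. Everything after that — the appeal to the standard rectification, the verification that $P/x$ is a finite poset, the application of Proposition~\ref{prop:cylinder_is_compact_ENR}, and the composition in $\Coh(P,\mathbf{Top})$ — is routine bookkeeping.
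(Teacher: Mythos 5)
Your proposal is correct and follows essentially the same route as the paper: replace each $F(x)$ by a compact ENR via a homotopy-coherent transport of structure (the paper simply cites Cordier--Porter, \S 2, for the step you flag as the genuine obstacle), then apply the standard rectification $rF'$ and invoke Proposition~\ref{prop:cylinder_is_compact_ENR} to see that each $rF'(x)=\cyl(F'\vert_{\bar x})$ is a compact ENR.
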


\begin{proof}
For each $x\in P$, choose a homotopy equivalence $\varphi_x\colon F(x)\to F'(x)$ to some compact ENR. Then there is a \hc diagram $F'$ with prescribed spaces $F'(x)$ and a \hc transformation $F\to F'$ with prescribed maps $\varphi_x$ \cite[\S 2]{Cordier-Porter(1988)}. Let $G:=rF'$ be the standard rectification of $F'$ as in section \ref{section:hc_transformations}, i.e.~
\[G(x)=\cyl(F'\vert_{\bar x}).\]
For each $x$, the space $G(x)$ is a compact ENR by Proposition \ref{prop:cylinder_is_compact_ENR}. 
\end{proof}

\begin{proposition}\label{prop:AF_on_generalized_mapping_cylinders}
Let $F, G$ be strict diagrams of the shape $P$ consisting entirely of compact ENRs and homotopy equivalences, and let $f\colon F\to G$ be a natural transformation. Suppose that for all $x\in P$, the map $f_x\colon F(x)\to G(x)$ is an approximate fibration. Then $f_*\colon\cyl(F)\to\cyl(G)$ is also an approximate fibration.
\end{proposition}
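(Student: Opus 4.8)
The plan is to induct on $\dim P$, the largest length of a strictly increasing chain in $P$. If $\dim P = 0$ then $P$ is discrete, so $\cyl(F) = \coprod_{x\in P} F(x)$, $\cyl(G) = \coprod_{x\in P} G(x)$ and $f_* = \coprod_{x} f_x$; this is an approximate fibration because both the lifting data in the definition of an approximate fibration and any open cover of $\coprod_x G(x)$ split over the clopen pieces $G(x)$, reducing to the hypothesis that each $f_x$ is one. So the substance is the inductive step.

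For the inductive step, let $M\subseteq P$ be the set of maximal elements. Applying Lemma~\ref{lemma2} to $F$ and to $G$ yields a map of push-out squares relating $\cyl(F)$ to $\cyl(F\vert_{P-M})$, $\coprod_{m\in M}\cyl(F\vert_{\bar m-\{m\}})$ and $\coprod_{m\in M}\cyl(F\vert_{\bar m})$ (and similarly for $G$), the vertical maps being cofibrations; by Lemma~\ref{lemma1} each $\cyl(F\vert_{\bar m})$ is the mapping cylinder of $\cyl(F\vert_{\bar m-\{m\}})\to F(m)$ with $\cyl(F\vert_{\bar m-\{m\}})$ at one end, and $f_*$ respects this cylinder coordinate $s\in[0,1]$ (this requires checking that the identification of Lemma~\ref{lemma1} is natural). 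I would then cover $\cyl(G)$ by the two open sets $V_{\mathrm{low}} := \cyl(G)\setminus\coprod_{m\in M}G(m)$, which is $\cyl(G\vert_{P-M})$ with open collars $\cyl(G\vert_{\bar m-\{m\}})\times[0,1)$ attached, and $V_{\mathrm{top}} := \cyl(G)\setminus\cyl(G\vert_{P-M})$, a disjoint union of open mapping-cylinder neighbourhoods of the $G(m)$. Since $P-M$ and each $\bar m-\{m\}$ have dimension $<\dim P$, the inductive hypothesis makes $\cyl(F\vert_{P-M})\to\cyl(G\vert_{P-M})$ and $\coprod_m\cyl(F\vert_{\bar m-\{m\}})\to\coprod_m\cyl(G\vert_{\bar m-\{m\}})$ approximate fibrations, and by hypothesis $\coprod_m F(m)\to\coprod_m G(m)$ is one. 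I would finish by a local--global principle for approximate fibrations (a map restricting to an approximate fibration over each member of an open cover of its base is an approximate fibration, and restrictions over open subsets are again approximate fibrations), once I know that $f_*$ restricts to an approximate fibration over $V_{\mathrm{low}}$ and over $V_{\mathrm{top}}$. Over $V_{\mathrm{low}}$ this is a gluing of the two lower-dimensional approximate fibrations above along the closed collared subset $\coprod_m\cyl(F\vert_{\bar m-\{m\}})$, handled by a sum theorem for approximate fibrations along a collared closed subset; over $V_{\mathrm{top}}$ it is the key point.

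That key point is the generalization of Hatcher's iterated-mapping-cylinder argument \cite{Hatcher(1975)}: a \emph{map of mapping cylinders} $\Cyl(a)\to\Cyl(b)$ induced by a commuting square $a\colon A\to C$, $b\colon A'\to C'$, $\alpha\colon A\to A'$, $\gamma\colon C\to C'$, in which $\alpha$ and $\gamma$ are approximate fibrations and which preserves the cylinder coordinate, is an approximate fibration (here $\alpha$ is the coproduct of the maps $\cyl(F\vert_{\bar m-\{m\}})\to\cyl(G\vert_{\bar m-\{m\}})$ and $\gamma=\coprod_m f_m$, or their half-open variants). Over $\{s<1\}$ the map is $\alpha\times\mathrm{id}$ on a genuine product $A'\times[0,1)$, hence an approximate fibration; near the $C'$-end one must play this off against $\gamma$. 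I expect the \emph{hard part} to be exactly this: given a homotopy into $\Cyl(b)$ and a partial lift, producing one approximate lift from the approximate lifts supplied by $\alpha$ on the region where the $s$-coordinate stays $<1$ and by $\gamma$ on the region where it equals $1$, arranging that the two agree within the prescribed control where they meet, using the compatibility $\gamma a = b\alpha$ and an adapted subdivision of both parameter directions (this is where Hatcher's inductive procedure, run fibrewise, enters). Along the way I would also need to record, or cite from the approximate-fibration literature (Coram--Duvall; Hughes--Taylor--Williams), the elementary stability properties used above --- stability under finite disjoint union, under products (in particular $p\mapsto p\times\mathrm{id}_{[0,1]}$), under restriction to open subsets of the base, the local--global principle, and the sum theorem along a closed subset with a mapping-cylinder neighbourhood --- and verify carefully that the identifications provided by Lemmas~\ref{lemma1} and~\ref{lemma2} are natural enough that $f_*$ genuinely has the cylinder-coordinate-preserving form required.
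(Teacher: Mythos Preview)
Your inductive scaffolding is exactly the paper's: induct on $\dim P$, peel off maximal elements via Lemma~\ref{lemma2}, identify $\cyl(F\vert_{\bar m})$ with a mapping cylinder via Lemma~\ref{lemma1}, and reduce to a local--global principle for approximate fibrations (the paper cites \cite[Theorem~12.15]{Hughes-Taylor-Williams(1990)} for this). The divergence, and the gap, is in your ``key point''.

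As stated, your mapping-cylinder claim is \emph{false}: it is not true that a cylinder-coordinate-preserving map $\Cyl(a)\to\Cyl(b)$ induced by a commuting square with $\alpha,\gamma$ approximate fibrations is always an approximate fibration. Take $A=A'=\ast$, $C=S^1$, $C'=\ast$, $a$ the inclusion of a point, $b=\id$, $\alpha=\id$, $\gamma$ constant. Then $\alpha,\gamma$ are trivially approximate fibrations, but $\Cyl(a)\to\Cyl(b)=I$ has fibre $\ast$ over $0$ and $S^1$ over $1$, so cannot be an approximate fibration. What is missing is that the square be a \emph{weak homotopy pull-back}; without this the two ends of the cylinder have unrelated homotopy fibres and no lifting argument can bridge them. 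In your application this condition does hold, but only because all morphisms in $F$ and $G$ are homotopy equivalences --- a hypothesis you never invoke.

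The paper supplies exactly this missing ingredient, and does so by a route that avoids any hands-on lifting argument. Its Lemma~\ref{lem:AF_on_ordinary_mapping_cylinders} assumes the square is a weak homotopy pull-back (with ENR hypotheses) and proves the mapping-cylinder map is an approximate fibration by again invoking \cite[Theorem~12.15]{Hughes-Taylor-Williams(1990)}: one checks that restriction over each member of a basis of $\Cyl(g)$ is a weak homotopy pull-back, and this reduces to routine $2$-out-of-$3$ manipulations with the squares for $p$, $q$, and the given square. The weak homotopy pull-back hypothesis is then verified in the inductive step via Quillen's result (Fact~\ref{fact:quasi-fibration}) that $\hocolim$ of a diagram of homotopy equivalences sits over $\vert\calc\vert$ as a quasi-fibration, so the squares
\[
\xymatrix{ F(c)\ar[r]\ar[d] & \cyl(F)\ar[d]\\ G(c)\ar[r] & \cyl(G) }
\]
are weak homotopy pull-backs. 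This is the idea you are missing: rather than a delicate Hatcher-style patching of approximate lifts (which, even with the right hypothesis, you have not carried out), the paper converts the problem entirely into weak-homotopy-pull-back bookkeeping via the Hughes--Taylor--Williams characterization. Your ``sum theorem'' for $V_{\mathrm{low}}$ is likewise subsumed: the paper identifies the neighbourhood of $\cyl(G\vert_{P-M})$ as another mapping cylinder and applies the same lemma.
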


Let us first deal with the special case that $P=\{0<1\}$. A slightly more general statement is:

\begin{lemma}\label{lem:AF_on_ordinary_mapping_cylinders}
Let
\[\xymatrix{
 A \ar[rr]^f \ar[d]^p && X \ar[d]^q\\
 B \ar[rr]^g && Y
}\]
be a weak homotopy pull-back square, with all spaces ENRs, and $A, B$ compact. If $p$ and $q$ are approximate fibrations, then so is the induced map $r\colon\Cyl(f)\to\Cyl(g)$ between the mapping cylinders.
\end{lemma}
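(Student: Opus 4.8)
The plan is to exhibit $r$ as a map which is related, by a controlled fiber homotopy equivalence over $\Cyl(g)$, to an honest Hurewicz fibration; since a fibration is an approximate fibration and the approximate fibration property is invariant under controlled fiber homotopy equivalence over the base, this will finish the proof. The role of the weak homotopy pull-back hypothesis, together with the assumption that $p$ and $q$ are approximate fibrations, is precisely to produce that controlled comparison.

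Concretely, I would first replace $q\colon X\to Y$, up to a controlled fiber homotopy equivalence over $Y$, by a Hurewicz fibration $q'\colon X'\to Y$. Pulling $q'$ back along $g$ gives a fibration $B\times_Y X'\to B$, and we obtain a \emph{cartesian} square of fibrations $\bigl(B\times_Y X'\xrightarrow{\pi}X'\,;\,B\xrightarrow{g}Y\bigr)$. The weak homotopy pull-back hypothesis says exactly that the canonical comparison $A\to B\times_Y X'$, compatible with $p$ and with $f$, is a weak homotopy equivalence; combined with the fact that $p$ is an approximate fibration and that $A$ and $B$ are compact, a controlled form of Dold's theorem on fiber homotopy equivalences upgrades it to a controlled fiber homotopy equivalence over $B$. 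Taking mapping cylinders then produces a controlled fiber homotopy equivalence over $\Cyl(g)$ from $r\colon\Cyl(f)\to\Cyl(g)$ to $r'\colon\Cyl(\pi)\to\Cyl(g)$. Finally, the mapping cylinder of a cartesian square of fibrations is again a Hurewicz fibration, so $r'$ is a fibration and the proof is complete.

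The main obstacle I expect is this middle step: upgrading the weak homotopy pull-back hypothesis --- a statement about homotopy \emph{types} of fibers --- to a genuinely \emph{controlled} comparison of $p$ and $q$ with a cartesian square of fibrations. This is the only place where one really uses that $p$ and $q$ are approximate fibrations rather than arbitrary maps, and it amounts to a parametrized/controlled form of Dold's theorem, with the compactness of $A$ and $B$ entering to make the control over the ``cylinder part'' automatic (note that $X$ and $Y$ are not assumed compact, so control over $Y$ must be taken seriously). The remaining ingredients are standard: that the approximate fibration property is local on the base and stable under products --- so one could alternatively check $r$ separately over the open cylinder $\cong B\times[0,1)$, where it restricts to $p\times\mathrm{id}$, and over a mapping cylinder neighborhood of $Y$, which is again where the pull-back hypothesis is used --- that it is invariant under controlled fiber homotopy equivalence over the base, and that the mapping cylinder of a cartesian square of fibrations is a fibration.
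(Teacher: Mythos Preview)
Your approach via controlled fiber homotopy equivalences and a parametrized Dold theorem is plausible in outline but considerably heavier than the paper's route. The paper avoids altogether the fibrant replacement of $q$ and the controlled Dold-type argument you rightly flag as the main obstacle. Instead it invokes the criterion of Hughes--Taylor--Williams \cite[Theorem~12.15]{Hughes-Taylor-Williams(1990)}: a map between ENRs is an approximate fibration if and only if for every $U$ in some basis of the target, the restriction square
\[\xymatrix{
{\Cyl(f)\vert_U} \ar[d]^{r\vert_U} \ar[rr] && {\Cyl(f)} \ar[d]^r\\
U \ar[rr] && {\Cyl(g)}
}\]
is a weak homotopy pull-back. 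The paper then writes down an explicit basis of $\Cyl(g)$ consisting of three obvious types of opens --- products $V\times[0,a)$ and $V\times(a,b)$ over opens $V\subset B$, and mapping-cylinder neighborhoods $g^{-1}(W)\times(a,1)\cup W$ of opens $W\subset Y$ --- and checks the pull-back condition directly for each, using only that $p$ and $q$ are approximate fibrations and that the original square is a weak homotopy pull-back. No control estimates, no Dold theorem, no fibrant replacement.

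Your alternative suggestion at the end --- checking $r$ locally over $B\times[0,1)$ and over a mapping-cylinder neighborhood of $Y$ --- is essentially the paper's strategy, just phrased in terms of the lifting property rather than the homotopy-pull-back criterion. The advantage of the criterion is that it reduces everything to statements about homotopy fibers, which is exactly the language of the hypotheses; the two-out-of-three property for weak homotopy pull-backs then does all the work. The advantage of your main proposal, if carried through, would be that it actually produces a genuine fibration close to $r$; but that extra information is never used here, and the price --- a controlled Dold theorem with $X$ and $Y$ not assumed compact --- is high compared with the paper's elementary verification.
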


Recall that a commutative square of spaces is said to be a weak homotopy pull-back if the induced map on vertical (or horizontal) homotopy fibers is a weak homotopy equivalence.

\begin{remark}
The statement of Lemma \ref{lem:AF_on_ordinary_mapping_cylinders} becomes wrong if we drop the ENR condition on $A$ and $X$. To see this, let $W$ be a space which is weakly contractible but not contractible (such as the Warsaw circle). Then the diagram
\[\xymatrix{
 W \ar[rr]^f \ar[d]^p && {*} \ar[d]^q\\
 {*} \ar[rr]^g && {*}
}\]
is a weak homotopy pull-back. Suppose that the induced projection map $r$ from the cone $CW$ to $I=[0,1]$ was an approximate fibration. There is a contraction $W\times I\to CW$ which collapses $W\subset CW$ onto a point $w\in W$, passing through the cone point; if $r$ was an approximate fibration, we could use the approximate lifting property to obtain a contraction of $W$ in $W\times [0,\varepsilon)\subset CW$. This contradicts the fact that $W$ is not contractible.
\end{remark}

\begin{proof}[Proof of Lemma \ref{lem:AF_on_ordinary_mapping_cylinders}]
By \cite[Corollary E.7]{Bredon(1993)}, the mapping cylinders of $f$ and $g$ are ENRs. Using \cite[Theorem 12.15]{Hughes-Taylor-Williams(1990)}, it is enough to show that for all elements $U$ a given basis of the topology of $\Cyl(g)$, the diagram
\[\xymatrix{
{\Cyl(f)\vert_U} \ar[d]^{r\vert_U} \ar[rr]&& {\Cyl(f)} \ar[d]^r\\
U \ar[rr]&& {\Cyl(g)}
}\]
(in which the horizontal maps are the inclusions) is a weak homotopy pull-back, i.e.~induces a weak homotopy equivalence on the vertical homotopy fibers.

Consider the basis of $\Cyl(g)$ which consists of all subsets of either of the following forms:
\begin{enumerate}
\item $V\times [0,a)$ for $0<a<1$ and $V\subset B$ open,
\item $V\times (a,b)$ for $0<a<b<1$ and $V\subset B$ open,
\item $g\inv(W)\times (a,1)\cup W$ for $0<a<1$ and $W\subset Y$ open.
\end{enumerate}
In case (i), we have $\Cyl(f)\vert_U=p\inv(V)\times [0,a)$, so in the diagram
\begin{equation}\label{eq:diagram_of_cyls}
\xymatrix{
p\inv(V) \ar[rr]^\simeq \ar[d]^{p\vert_V} && {\Cyl(f)\vert_U} \ar[d]^{r\vert_U} \ar[rr]&& {\Cyl(f)} \ar[d]^r \ar[rr]^\simeq && X \ar[d]^q\\
V \ar[rr]^\simeq && U \ar[rr]&& {\Cyl(g)}\ar[rr]^\simeq && Y
}
\end{equation}
the left-hand square is a weak homotopy pull-back. The same is true for the right-hand square. Finally, in the diagram
\[\xymatrix{
p\inv(V) \ar[rr] \ar[d]^{p\vert_V} && A \ar[rr]^f \ar[d]^p && X \ar[d]^q\\
V \ar[rr] && B \ar[rr]^g && Y
}\]
the left-hand square is a weak homotopy pull-back, as $p$ is an approximate fibration, and the right-hand square is a weak homotopy pull-back by assumption. So the total square, which agrees with the total square of \eqref{eq:diagram_of_cyls}, is a weak homotopy pull-back. Hence the square in the middle of \eqref{eq:diagram_of_cyls} is a weak homotopy pull-back, as required.

The argument in case (ii) is identical. The case (iii) is a similar argument, using this time that $q$ is an approximate fibration.
\end{proof}

To take profit, in the general case, of the statement we just proved, we will make use of the following result:

\begin{fact}\label{fact:quasi-fibration}
Quillen has shown that if $F$ is a strict diagram (over an arbitrary index category $\calc$) consisting entirely of homotopy equivalences, then the canonical projection 
\[\hocolim F\to \hocolim *=\vert \calc\vert\]
is a quasi-fibration. This implies that if $F\to G$ is natural transformation between such diagrams, then for all objects $c$ the obvious square
\[\xymatrix{
 F(c) \ar[rr]\ar[d] && {\hocolim(F)} \ar[d]\\
 G(c) \ar[rr] && {\hocolim(G)}
}\]
is a weak homotopy pull-back. By homotopy invariance, the same statement thus holds with hocolim replaced by the Hocolim.
\end{fact}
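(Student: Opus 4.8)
The statement splits into three parts: (1) Quillen's theorem that the projection $\hocolim F\to\vert\calc\vert$ is a quasi-fibration; (2) the deduction that the displayed square is a weak homotopy pull-back for $\hocolim$; and (3) the transfer of (2) to $\Hocolim$. The plan is to recall the proof of (1) in outline, to carry out (2) in detail via the pasting law for homotopy pull-backs, and to dispose of (3) formally using the comparison of Section~\ref{section:relation_to_BK}. The main obstacle is (1): the honest verification that the projection is a quasi-fibration, which is the only step in which the hypothesis that every structure map of $F$ (and of $G$) is a homotopy equivalence enters.

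For (1), model $\hocolim F$ as the realization of the simplicial space $[n]\mapsto\coprod_{c_0\to\cdots\to c_n}F(c_0)$, so that $\pi_F\colon\hocolim F\to\vert\calc\vert=\vert N_\bullet\calc\vert$ is induced by the levelwise map forgetting the $F(c_0)$-coordinate. Filter $\vert\calc\vert$ by its skeleta $\operatorname{sk}_n$ and show, by induction on $n$, that $\pi_F$ restricted over $\operatorname{sk}_n$ is a quasi-fibration; the passage to the colimit over $n$ is then standard, as the skeletal filtration is by cofibrations and $\vert\calc\vert$ carries the colimit topology. Over the open $n$-cells the restriction of $\pi_F$ is a product $\coprod\mathring\Delta^n\times F(c_0)$, hence an honest fibration. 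For the inductive step, write $\operatorname{sk}_n$ as the union of the open $n$-cells with an open collar neighbourhood $V$ of $\operatorname{sk}_{n-1}$ and invoke Dold's gluing theorem for quasi-fibrations: over each open cell and over the overlap the map is a product, and over $V$ one must produce a deformation of $V$ onto $\operatorname{sk}_{n-1}$ covered by a homotopy of $\pi_F\inv(V)$ that is a homotopy equivalence on each fibre. On an $n$-cell attached along a chain $c_0\to\cdots\to c_n$, this covering homotopy pushes a fibre $F(c_0)$ into the fibre over a boundary point, which is $F(c_0)$ over the $i$-th face for $i>0$ and $F(c_1)$ over the $0$-th face; the relevant comparison map is a structure map of $F$, and it is a homotopy equivalence by hypothesis — exactly what Dold's criterion requires. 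The argument for $\pi_G$ is identical.

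For (2), fix an object $c$ and let $v_c\in\vert\calc\vert$ be the corresponding $0$-simplex. From the simplicial model one reads off that $\pi_F\inv(v_c)=F(c)$ and $\pi_G\inv(v_c)=G(c)$, that under these identifications the obvious maps $F(c)\to\hocolim F$ and $G(c)\to\hocolim G$ are the fibre inclusions, and that $\hocolim\eta$ restricts to $\eta_c$ on these fibres. Since $\pi_F$ and $\pi_G$ are quasi-fibrations by (1), the fibre inclusions are weakly equivalent, compatibly with the maps to the total spaces, to the inclusions of the homotopy fibres $\hofib_{v_c}(\pi_F)$ and $\hofib_{v_c}(\pi_G)$. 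Now $\hocolim\eta\colon\hocolim F\to\hocolim G$ is a map over $\vert\calc\vert$, so factoring $\{v_c\}\to\vert\calc\vert$ through $\pi_G$ and applying the pasting law for homotopy pull-backs twice yields a natural weak equivalence
\[\hofib_{v_c}(\pi_F)\ \simeq\ \hocolim F\times^{h}_{\vert\calc\vert}\{v_c\}\ \simeq\ \hocolim F\times^{h}_{\hocolim G}\hofib_{v_c}(\pi_G),\]
which says precisely that the square with corners $\hofib_{v_c}(\pi_F)$, $\hocolim F$, $\hofib_{v_c}(\pi_G)$, $\hocolim G$ is homotopy cartesian. Transporting along the weak equivalences of the previous sentence shows that the square with corners $F(c)$, $\hocolim F$, $G(c)$, $\hocolim G$ is a weak homotopy pull-back, as claimed. (Equivalently: a map between quasi-fibrations over a common base is, on fibres over any point, a weak homotopy pull-back, the pasting law being the one-line proof.)

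For (3), recall from Section~\ref{section:relation_to_BK} that for a strict diagram there is a natural homotopy equivalence $\Hocolim(\varepsilon^*F)\xrightarrow{\simeq}\hocolim F$, compatible with the projections to $\vert\calc\vert$ and with the obvious maps out of $F(c)$. Hence, given $\eta\colon F\to G$, the square with corners $F(c)$, $\Hocolim(\varepsilon^*F)$, $G(c)$, $\Hocolim(\varepsilon^*G)$ is connected by a termwise weak equivalence to the square with corners $F(c)$, $\hocolim F$, $G(c)$, $\hocolim G$ — the comparison being the natural equivalence $\Hocolim(\varepsilon^*-)\xrightarrow{\simeq}\hocolim(-)$ on the right-hand vertices and the identity on the left-hand ones. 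Since being a weak homotopy pull-back is invariant under termwise weak equivalences of squares, the $\Hocolim$-square is a weak homotopy pull-back as well. In particular only the $\hocolim$ case of Quillen's theorem is needed, the $\Hocolim$-statement following purely formally.
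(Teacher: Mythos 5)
Your proposal is correct and follows the same route the paper takes: the paper states this as a Fact, citing Quillen's quasi-fibration theorem as a black box and then invoking exactly the two deductions (pasting of homotopy pull-backs over $\vert\calc\vert$, and homotopy invariance of the comparison $\Hocolim(\varepsilon^*F)\xrightarrow{\simeq}\hocolim F$ from the earlier section) that you spell out. Your additional sketch of Quillen's theorem itself, via skeletal induction and Dold's gluing criterion with the structure maps of $F$ supplying the required fibrewise homotopy equivalences, is an accurate outline of the standard proof of the cited result.
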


\begin{proof}[Proof of Proposition \ref{prop:AF_on_generalized_mapping_cylinders}]
Again the proof uses Lemmas \ref{lemma1} and \ref{lemma2} for an induction on the dimension of $P$. If $\dim(P)=0$, then $\cyl(F)$ and $\cyl(G)$ are coproducts and $f_*$ is the induced map, hence an approximate fibration.

Suppose inductively that $\dim(P)=n>0$ and that the statement holds for all posets of dimension less than $n$. Denote by $M\subset P$ the set of maximal elements, and for $m\in M$, let $\bar m\subset P$ be the subset of elements which are less than or equal to $m$. By our assumptions, the maps
\[\cyl(F\vert_{\bar m-\{m\}})\to \cyl(G\vert_{\bar m-\{m\}}),\; \cyl(F\vert_{P-M})\to\cyl(G\vert_{P-M})\]
are approximate fibrations. We claim that the map
\[\cyl(F\vert_{\bar m})\to\cyl(G\vert_{\bar m})\]
is an approximate fibration, too. In fact, using Lemma \ref{lemma1}, this map is induced from
\[\xymatrix{
{\cyl(F\vert_{\bar m-\{m\}})} \ar[d] \ar[rr] && F(m) \ar[d]\\
{\cyl(G\vert_{\bar m-\{m\}})} \ar[rr] && G(m)
}\]
on the level of horizontal mapping cylinders. It follows from Lemma \ref{lem:AF_on_ordinary_mapping_cylinders} and Fact \ref{fact:quasi-fibration} that it is an approximate fibration.

By \cite[Theorem 12.15]{Hughes-Taylor-Williams(1990)}, it is enough to show that 
\[f_*\colon\cyl(F)\to\cyl(G)\]
is locally an approximate fibration. Suppose for simplicity that $M$ consists of a single element. Lemma \ref{lemma2} then shows that $\cyl(G)$ is a union of $\cyl(G\vert_{\bar m})$ and $\cyl(G\vert_{P-M})$. We already know that $f_*$ restricted to $\cyl(G\vert_{\bar m})$, and hence restricted to any open subset of it, is an approximate fibration. It is therefore enough to show that the restriction of $f_*$ over a neighborhood of $\cyl(G\vert_{P-M})$ is an approximate fibration as well.

Such a neighborhood is given by the adjunction space
\[\cyl(G\vert_{P-M})\cup_{\cyl(G\vert_{\bar m-\{m\}})\times 0} \cyl(G\vert_{\bar m-\{m\}})\times [0,\varepsilon]\]
for $0<\varepsilon<1$. Notice that this adjunction space can be identified with a mapping cylinder. In fact, more generally, the restriction of $f_*$ over this adjunction space may be identified with the map induced from
\[\xymatrix{
{\cyl(F\vert_{\bar m-\{m\}})} \ar[d] \ar[rr] 
  && {\cyl(F\vert_{P-M})} \ar[d]\\
{\cyl(G\vert_{\bar m-\{m\}})} \ar[rr] 
  && {\cyl(G\vert_{P-M})} 
}\]
on the level of horizontal mapping cylinders. Therefore it is an approximate fibration, again by Lemma \ref{lem:AF_on_ordinary_mapping_cylinders} and Fact \ref{fact:quasi-fibration}. 

If $M$ consists of more than one element, Lemma \ref{lemma2} shows that $\cyl(G)$ can be obtained by successively adjoining the spaces $\cyl(G\vert_{\bar m-\{m\}})$ for $m\in M$. In this case the above argument applies for each of the steps.
\end{proof}

\section{The factorization result}\label{section:main_result}

The goal of this section is to prove the following theorem:

\begin{theorem}\label{main_theorem}
Let $p\colon E\to B$ be a fibration over a finite simplicial complex. Suppose that the fibers of $p$ are homotopy equivalent to finite simplicial complexes. Then there exists a compact ENR $X$ and a commutative diagram
\[\xymatrix{
X \ar[rr]^\varphi \ar[rd]_q && E \ar[ld]^p\\
 & B
}\]
where $q$ is an approximate fibration and $\varphi$ is a homotopy equivalence.
\end{theorem}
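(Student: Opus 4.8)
The plan is to reduce Theorem \ref{main_theorem} to the geometric results of Section \ref{section:geometric_properties} by packaging the fibration $p$ as a homotopy coherent diagram over the poset of simplices of $B$. First I would let $P$ be the poset of (closed) simplices of the finite simplicial complex $B$, ordered by the face relation, so that $|P| = \sd B$ is the barycentric subdivision and in particular $|P| \cong B$. For each simplex $\sigma$ write $E|_\sigma := p^{-1}(\sigma)$; since $\sigma$ is contractible and $p$ is a fibration, $E|_\sigma$ is homotopy equivalent to the fiber, hence to a finite simplicial complex, hence to a compact ENR. The inclusions $E|_\sigma \hookrightarrow E|_\tau$ for $\sigma \leq \tau$ assemble into a strict diagram $\mathbf{E}\colon P \to \mathbf{Top}$, and we may regard it as a homotopy coherent diagram via the augmentation functor $\varepsilon\colon\calf_\bullet P\to P$. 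The first key point is that $\cyl(\mathbf{E})$ is homotopy equivalent to $E$ itself, with a compatible map down to $B$: more precisely I would note that the obvious map $\hocolim_P \mathbf{E}\to E$ is a homotopy equivalence (each $E|_\sigma \to E|_{\sigma'}$ for a face inclusion that is, after the subdivision, cofinal, and the covering $\{E|_\sigma\}$ is a suitable hypercover of $E$; alternatively use that the map is a quasi-fibration over $|P|=B$ with contractible homotopy fibers over each point, hence a homotopy equivalence on total spaces), and combine with Theorem \ref{thm:relation_to_hocolim}(i),(ii) identifying $\cyl(\varepsilon^*\mathbf{E})$ with $\hocolim_P\mathbf{E}$. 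This produces the target corner $E$ together with the structure map $\cyl(\varepsilon^*\mathbf{E}) \simeq E \xrightarrow{p} B$, which under the homeomorphism $|P|\cong B$ is the canonical projection $\cyl(\varepsilon^*\mathbf{E}) \to \cyl(\varepsilon^* *) = |P|$.

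Next I would replace $\mathbf{E}$ by a diagram with good point-set properties. Since each $\mathbf{E}(\sigma) = E|_\sigma$ is homotopy equivalent to a compact ENR, Corollary \ref{cor:rectification_with_ENR_property} provides a rectification $H\colon \varepsilon^*\mathbf{E} \Rightarrow G$ where $G\colon P \to \mathbf{Top}$ is a \emph{strict} diagram with each $G(\sigma)$ a compact ENR; moreover $H$ is a level homotopy equivalence and invertible in $\Coh(P,\mathbf{Top})$. By Proposition \ref{prop:cylinder_is_compact_ENR} the space $X := \cyl(G)$ is a compact ENR. By functoriality of $\cyl$ on $\Coh(P,\mathbf{Top})$ (the Proposition following Lemma \ref{lemma3}), the transformation $H$ induces a homotopy equivalence $\cyl(\varepsilon^*\mathbf{E}) \simeq \cyl(G) = X$, and composing with $\cyl(\varepsilon^*\mathbf{E}) \simeq E$ gives the homotopy equivalence $\varphi\colon X \to E$ we want. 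The one subtle point here is \emph{commutativity of the triangle}: I must arrange that the composite $X \xrightarrow{\varphi} E \xrightarrow{p} B$ agrees (at least up to homotopy, and then I can rigidify) with the canonical projection $q\colon X = \cyl(G) \to \cyl(* ) = |P| \cong B$. This follows because $H$, being a \hc transformation of diagrams over $P$, is compatible with the constant diagram $*$: the square
\[\xymatrix{
\cyl(\varepsilon^*\mathbf{E}) \ar[r]^-{H_*} \ar[d] & \cyl(G) \ar[d]\\
|P| \ar@{=}[r] & |P|
}\]
commutes up to homotopy by Remark \ref{rem:commutativity_of_triangle} (extend $H$ and the two projections to a \hc diagram on $\widehat{P\times[1]}$ over the cone, using that the constant diagram trivially extends). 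To make the triangle commute \emph{on the nose} I would, if necessary, replace $\varphi$ by a homotopic map using the homotopy lifting property of the actual fibration $p$ (which is still available since $p\colon E\to B$ is a genuine fibration), adjusting $\varphi$ within its homotopy class so that $p\circ\varphi = q$ strictly.

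Finally, the map $q\colon X = \cyl(G)\to |P|\cong B$ is an approximate fibration. Here I would invoke Proposition \ref{prop:AF_on_generalized_mapping_cylinders} with the natural transformation being the unique map $G \to *$ from $G$ to the constant diagram at a point: each $G(\sigma)\to *$ is trivially an approximate fibration (a map to a point is an approximate fibration precisely when the source is, and a compact ENR is an AF over a point; more honestly, one uses that the relevant statement is about $\cyl(G) \to \cyl(*) = |P|$ and the hypothesis of the Proposition is exactly that $G$ and $*$ are strict diagrams of compact ENRs and homotopy equivalences with each component map an AF, which holds vacuously for maps to points), so $q = (G\to *)_* \colon \cyl(G) \to |P|$ is an approximate fibration. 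Unwinding, $q$ is precisely the canonical projection of the generalized (iterated) mapping cylinder $\cyl(G)$ down to $|P|$, and Proposition \ref{prop:AF_on_generalized_mapping_cylinders} is tailored to show exactly that this projection is an approximate fibration when the diagram consists of homotopy equivalences between compact ENRs. Since $|P| \cong B$, this completes the construction.

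I expect the main obstacle to be the bookkeeping in the first paragraph: carefully setting up the diagram $\mathbf{E}$ over the face poset $P$ with its barycentric subdivision, verifying that $\cyl(\varepsilon^*\mathbf{E})$ is indeed homotopy equivalent to $E$ \emph{over $B$} (so that the projection really is the canonical one), and then — the genuinely delicate step — upgrading the up-to-homotopy commutativity of the triangle $p\circ\varphi \simeq q$ to strict commutativity using the homotopy lifting property of $p$, without destroying the fact that $\varphi$ is a homotopy equivalence. Everything downstream (ENR property of $X$, approximate-fibration property of $q$) is then a direct appeal to Proposition \ref{prop:cylinder_is_compact_ENR} and Proposition \ref{prop:AF_on_generalized_mapping_cylinders}.
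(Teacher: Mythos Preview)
Your proposal is correct and follows essentially the same architecture as the paper: form the strict diagram $\sigma\mapsto E|_\sigma$ over the face poset $P$, rectify via Corollary~\ref{cor:rectification_with_ENR_property} to a diagram $G$ of compact ENRs, set $X=\cyl(G)$, and invoke Propositions~\ref{prop:cylinder_is_compact_ENR} and~\ref{prop:AF_on_generalized_mapping_cylinders}; the paper likewise uses the homotopy lifting property of $p$ to strictify the triangle, and proves $\cyl_\sigma(E_\sigma)\simeq E$ by the same induction on $\dim B$ via Lemma~\ref{lemma2} that you gesture at.

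One point you underemphasize: your equality ``$\cyl(\varepsilon^* *) = |P|$'' must be a \emph{homeomorphism}, not merely the homotopy equivalence supplied by Theorem~\ref{thm:relation_to_hocolim}, because approximate fibrations are not stable under post-composition with arbitrary homotopy equivalences. The paper constructs this homeomorphism $\iota\colon\cyl_\sigma(*)\to B$ explicitly (sending the point over $\sigma$ to the barycenter of $\sigma$, via $\tau_\bullet$ and the barycentric-subdivision identification $|P|\cong B$) and verifies inductively that it is bijective. This is part of the ``bookkeeping'' you correctly flag as the main obstacle, but it is worth isolating as a separate step since the approximate-fibration conclusion for $q$ hinges on it.
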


Denote by $P$ the (finite) poset of simplices of the simplicial complex $B$. For $\sigma\in P$, let $E_\sigma\subset E$ be the restriction of $E$ over $\sigma$. The rule
\[\sigma\mapsto E_\sigma\]
defines a strict diagram on $P$. By our assumptions, $E_\sigma$ is homotopy equivalent to a finite simplicial complex. So Corollary \ref{cor:rectification_with_ENR_property} guarantees the existence of a rectification $H \colon E_\sigma\Rightarrow X_\sigma$ such that $X_\sigma$ is a compact ENR for each $\sigma$.

We are going to construct a homotopy commutative diagram
\[\xymatrix{
E \ar[d]^p && {\cyl_\sigma(E_\sigma)} \ar[ll]_{h_E}^\simeq \ar[r]^{H_*}_\simeq \ar[d] & {\cyl_\sigma(X_\sigma)} \ar[ddl]^{\pi_*} \\
B \ar@{=}[d] && {\cyl_\sigma(\sigma)} \ar[d]_{\pi'_*} \ar[ll]_{h_B}^\simeq\\
B && {\cyl_\sigma(*)} \ar[ll]_\iota^\cong
}\]
in which $\iota$ is a homeomorphism, and the maps $\pi_*$ and $\pi'_*$ are induced from the unique natural transformation to the constant diagram with value $*$.

The theorem now follows from this diagram. In fact if we let $X:=\cyl_\sigma(X_\sigma)$, then $X$ is a compact ENR by Proposition \ref{prop:cylinder_is_compact_ENR}. Inverting $H_*$ yields a map $\varphi\colon X\to E$ which we may change by the homotopy lifting property of $p$ so that $p\circ \varphi=\iota\circ\pi_*=:q$. The maps in the right triangle are given by the naturality of the 
homotopy colimit; in particular it follows from Proposition \ref{prop:AF_on_generalized_mapping_cylinders} that the map $\pi_*$ (and hence also $q$) is an approximate fibration. 

The two maps $h_E$ and $h_B$ are induced by the obvious (even strict) extensions of the corresponding diagrams. The commutativity of the upper left-hand square follows easily from Remark \ref{rem:commutativity_of_triangle}.

Thus we are left to do the following:
\begin{enumerate}
\item Show that $h_E$ and $h_B$ are homotopy equivalences.
\item  Construct the homeomorphism $\iota$ and to prove that the lower left-hand square commutes up to homotopy.
\end{enumerate}

We start with (i). We only need to show that $h_E$ is a homotopy equivalence. This is certainly true if $B$ is 0-dimensional.

Suppose that the result holds for base spaces of the dimension $n$, and that $\dim(B)=n+1$. Then the maximal elements of $P$ correspond precisely to the $(n+1)$-simplices of $B$. By  Lemma \ref{lemma2}, we have therefore a push-out diagram
\[\xymatrix{
{\coprod_{\tau\in M} \cyl_{\sigma\in \bar \tau-\{\tau\}}(E_\sigma)} \ar[rr]\ar@{_{(}->}[d]
 && {\cyl_{\sigma\in P-M}(E_\sigma)} \ar@{_{(}->}[d]\\
{\coprod_{\tau\in M} \cyl_{\sigma\in \bar \tau}(E_\sigma)} \ar[rr]
 && {\cyl_{\sigma\in P}(E_\sigma)}
}\]
Similarly $E$ decomposes as the following push-out
\[\xymatrix{
{\coprod_{\tau\in M} E\vert_{\partial \tau}} \ar[rr] \ar@{_{(}->}[d]
 && E\vert_{B_n} \ar@{_{(}->}[d] \\
{\coprod_{\tau\in M} E_{\tau}} \ar[rr]
 && E
}\]
where $B_n\subset B$ denotes the $n$-skeleton and $\partial\tau\subset\tau$ is the $n$-skeleton of $\tau$. Also $h_E$ decomposes.

The restrictions of $h_E$ to the spaces on the top are homotopy equivalences by the inductive assumption. On the other hand it follows from Lemma \ref{lemma1} that the restriction of $h_E$ to the the lower left-hand corner is also a homotopy equivalence. By the glueing lemma, the map $h_E$ between the lower right-hand corners is therefore a homotopy equivalence, too. This proves (i).

Now we come to (ii). To obtain the map $\iota$, we need to construct an extension of the constant diagram $*$ over $P$ to a \hc diagram $G$ over $\hat P$ such that $G(*)=B$. Intuitively the extension maps the point $*$ belonging to $\sigma$ to the barycenter of $\sigma$ in $B$. If $\sigma<\tau$, there is a canonical (linear) path between the barycenters of $\sigma$ and $\tau$; this rule also applies to higher coherences.

Formally this looks as follows. An extension of the constant diagram $*$ of shape $P$ over the cone is given by a natural transformation $\vert\phi_\bullet(\sigma)\vert\to\map(*,B)=B$. (See section \ref{section:hocolim} for notation.)
Recall the natural transformation $\tau_\bullet\colon \phi_\bullet(\sigma)\to N_\bullet(\sigma\backslash P)$ to the comma category from section \ref{section:relation_to_BK}. The transformation we are looking for is the following composite:
\[g_\sigma\colon \vert\phi_\bullet(\sigma)\vert\xrightarrow{\vert\tau_\bullet\vert} \vert \sigma\backslash P\vert \xrightarrow{\vert\varphi_\sigma\vert} \vert P\vert\cong B,\]
where the latter homeomorphism is the one between $B$ and its simplicial subdivision, and the functor $\varphi_\sigma$ is the forgetful one from section \ref{section:hc_transformations}.

An inductive argument similar to the one we used in (i) now shows that $\iota$ is indeed a homeomorphism.

We still need to show that the lower left-hand square commutes. To do this, we are going to construct a \hc diagram $G$ of shape $\widehat{P\times [1]}$ which restricts to the one represented by $\iota$ over $\hat P_0$ (let us call it $F_0$) and to the one (call it $F_1$) represented by $h_B$ over $\hat P_1$. By Remark \ref{rem:commutativity_of_triangle}, this will show that $h_B\circ s_*\simeq \iota$, where $s=G\vert_{P\times [1]}\colon *\Rightarrow\sigma$. But the constant diagram $*$ is terminal in $\Coh(P,\mathbf{Top})$, so the diagram $\sigma$ is also terminal, and $s$ is an inverse of $\pi'$. This implies that $h_B\simeq \iota\circ\pi'_*$.

The intuitive idea to define $G$ is again to map the point $*$ belonging to $\sigma$ to the barycenter of $\sigma$, now considered as a point in $\sigma$ rather than $B$. To carry out the formal construction, notice first that the inclusion $\hat P=\hat P_0\subset \widehat{P\times [1]}$ is split by 
\[r\colon  \widehat{P\times [1]} \to \hat P,\quad (x,0)\mapsto x, \quad (x,1)\mapsto *.\]

Define
\[G':=r^* F_0,\]
thus the pull-back of $F_0$ along the functor $r$. Since $r$ splits the inclusion, $G'$ is a \hc diagram on $\widehat{P\times [1]}$ that extends $F_0$. However, $G'(\sigma, 1)=B$ while $F_1(\sigma)=\sigma$ so $G'$ does not extend $F_1$. But observe that the maps
\[\vert\calf_\bullet(\widehat{P\times [1]})((\sigma,0),(\mu,1))\vert\to \vert\map(G'(\sigma,0),G'(\mu,1))\vert=\map(*,B)=B\]
given by the functor $G'$ actually take values in $\mu\subset B$. (This follows from the fact that the diagram
\[\xymatrix{
{\vert \calf_\bullet (\widehat{P\times [1]})((\sigma,0),(\mu,1))\vert} \ar[r]^(.65)r  
& {\vert \calf_\bullet\hat P(\sigma,*) \vert} \ar[r]^(.55){\vert\tau_\bullet\vert}
& {\vert \sigma\backslash P \vert} \ar[r]^{\vert\varphi_\sigma\vert}
& {\vert P\vert} \ar[r]^\cong
& B
\\
{\vert \calf_\bullet (\widehat{\bar\mu\times [1]})((\sigma,0),(\mu,1))\vert} \ar[r]^(.65)r  \ar[u]
& {\vert \calf_\bullet\hat {\bar\mu}(\sigma,*) \vert} \ar[r]^(.55){\vert\tau_\bullet\vert}\ar[u]
& {\vert \sigma\backslash \bar \mu \vert} \ar[r]^{\vert\varphi_\sigma\vert}\ar[u]
& {\vert \bar \mu\vert} \ar[r]^\cong\ar[u]
& \mu\ar[u]
}\]
with inclusions as vertical arrows is commutative.)

Hence if we let $G(\sigma, 1)=\sigma$, $G(\sigma,0)=*$, $G(*)=B$, and use the same operations for the morphisms as for $G'$, we obtain a \hc diagram over $P\times [1]$ as required.

\section{Simple homotopy type}\label{section:simple_homotopy_type}

Given a space $E$, a \emph{simple structure} $\xi$ on $E$ is the choice of a homotopy equivalence $f\colon E\to X$ to a compact ENR, where we identify $f$ with $f'\colon E\to X'$ if the Whitehead torsion $\tau(f'\circ f\inv)$ is zero. This definition is made in such a way that the Whitehead torsion of a homotopy equivalence
\[g\colon (E,\xi)\to (E',\xi')\]
between spaces with simple structures can be defined to be $\tau(f'\circ g\circ f\inv)$, where $f'$ and $f$ represent the simple structures $\xi'$ and $\xi$ respectively.

Let $p\colon E\to B$ be a fibration over a finite connected CW complex whose fiber is homotopy equivalent to a finite CW complex. In this situation, Farrell, L\"uck and the author \cite{Farrell-Lueck-Steimle(2010)} constructed a preferred simple structure on $E$. This construction depends on the choice of a point $b\in B$, a simple structure $\zeta$ on the fiber $F_b$, and a choice of \emph{spider} $s$ based at $b$, i.e.~a collection of paths
\[\gamma_c\colon b\rightsquigarrow x\]
indexed over the set of cells $c$ of $B$, from $b$ to some point $x$ in the interior of the cell $c$. The resulting construction is denoted $\xi(b,\zeta, s)$. In favorable cases, e.g.~if $p$ is 2-connected, then $\xi(b,\zeta,s)$ is independent of the choice of $(b,\zeta,s)$.

Now Theorem \ref{main_theorem} provides the total space $E$ with a homotopy equivalence to a compact ENR. In this section we prove that a suitable version of this construction represents the simple homotopy type $\xi(b,\zeta, s)$.

To formulate the result, let $p\colon E\to B$ be a fibration over a compact connected simplicial complex, $b\in B$, let $\zeta$ be a simple structure on $F_b$ and let $s$ be a spider for $B$ based at $b$. Choose a representative $f\colon F_b\to Z$ for $\zeta$; for each simplex $\sigma$ of $B$, choose a fiber trivialization $t_\sigma\colon E_\sigma\to F_b\times \vert\sigma\vert$ along the path $\gamma_\sigma$. 

Denote by $P$ the poset of simplices of $B$ and let $F$ be the functor on $P$ 
\[\sigma\mapsto E_\sigma:= p\inv(\sigma).\]
There is \cite[\S 2]{Cordier-Porter(1988)} a \hc transformation $H$ from $F$ to a \hc diagram $G$ on $P$ such that $G(\sigma)=Z$ for all $\sigma\in P$, and 
\[H_\sigma\colon E_\sigma\xrightarrow{t_\sigma} F_b\times\vert\sigma\vert\xrightarrow{\Proj} F_b\xrightarrow{f} Z.\]

\begin{proposition}
The homotopy equivalence
\[E\xleftarrow{h_E} \cyl(F) \xrightarrow{H_*}  \cyl(G) \to \cyl(rG)\]
represents the simple structure $\xi(b,\zeta,s)$. Here the last map is induced by the standard rectification.
\end{proposition}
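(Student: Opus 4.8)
The plan is to prove the proposition by induction on the dimension $d$ of $B$, at each stage combining the sum theorem for Whitehead torsion with the behaviour of the construction of \cite{Farrell-Lueck-Steimle(2010)} under the skeletal filtration of $B$. Write $\psi$ for the homotopy equivalence class $E\xleftarrow{h_E}\cyl(F)\xrightarrow{H_*}\cyl(G)\to\cyl(rG)$, and recall that $\cyl(rG)$ is a compact ENR by Proposition \ref{prop:cylinder_is_compact_ENR}, hence carries its canonical simple structure; since Whitehead torsion is a homotopy invariant, the fact that the diagram in section \ref{section:main_result} only commutes up to homotopy is harmless.

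For $d=0$ the complex $B$ is the single point $b$, the diagram $F$ is the one-object diagram with value $E_b=F_b$, and all the homotopy colimits in sight degenerate to their single value ($\cyl$ of the empty diagram being empty): $\cyl(F)=F_b$, $\cyl(G)=Z$ and $\cyl(rG)=Z$. Unwinding the definitions, $h_E$ and the rectification map are identities and $H_*$ is $H_b=f\circ\Proj\circ t_b=f$, so $\psi$ is represented by $f\colon F_b\to Z$, which by construction represents $\zeta=\xi(b,\zeta,s)$.

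For $d\geq 1$ let $M\subset P$ be the set of $d$-simplices. As in section \ref{section:main_result}, Lemma \ref{lemma2} applied to $F$ and to the strict diagram $rG$, together with the corresponding decomposition of $E$, exhibits the spaces $E$ and $\cyl(rG)$ and the map $\psi$ all as push-outs along cofibrations coming from the decomposition of $B$ into its $(d-1)$-skeleton $B_{d-1}$ and the top simplices $\tau$, with gluing loci $\coprod_{\tau}E\vert_{\partial\tau}$, $\coprod_{\tau}\cyl(rG\vert_{\bar\tau-\{\tau\}})$, etc.; the homotopy commutativity needed here comes from Remark \ref{rem:commutativity_of_triangle}. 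By the sum theorem for Whitehead torsion it then suffices to check that $\psi$ is simple after restriction over $B_{d-1}$, over each closed simplex $\bar\tau$, and over each $\partial\tau$, once these restrictions carry the simple structures assigned to them by \cite{Farrell-Lueck-Steimle(2010)}. Over $B_{d-1}$ (connected, of dimension $d-1$, containing $b$) this is the inductive hypothesis for the restricted fibration and restricted spider. Over $\bar\tau$, the poset $\bar\tau$ has terminal object $\tau$, so Lemma \ref{lemma1} identifies $\cyl(F\vert_{\bar\tau})$ and $\cyl(rG\vert_{\bar\tau})$ as mapping cylinders that collapse simply onto $E_\tau$ and onto $\cyl(G\vert_{\bar\tau})\simeq Z$ respectively; under these collapses $\psi\vert_{\bar\tau}$ becomes $H_\tau\colon E_\tau\xrightarrow{t_\tau}F_b\times\vert\tau\vert\xrightarrow{\Proj}F_b\xrightarrow{f}Z$, which is exactly the representative of the simple structure that \cite{Farrell-Lueck-Steimle(2010)} puts on $E_\tau$ via the trivialisation along $\gamma_\tau$ and the contractibility of $\tau$. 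Finally, $\partial\tau$ is a complex of dimension $d-1$ and is covered by the inductive hypothesis after transporting the basepoint and spider into $\partial\tau$ along an appropriate sub-path of $\gamma_\tau$.

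The main obstacle is precisely this last piece of bookkeeping: one must match the three push-out decompositions (of $E$, of $\cyl(rG)$, and of $\psi$) with the inductive structure of the construction in \cite{Farrell-Lueck-Steimle(2010)}, and in particular verify that $\xi(b,\zeta,s)$ restricts correctly along the skeletal filtration and that the spider restricts (or transports) correctly over the faces of the top simplices — essentially a gluing formula for $\xi(b,\zeta,s)$. Once this compatibility is in place, the sum theorem for Whitehead torsion closes the induction.
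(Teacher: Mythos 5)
Your argument is essentially the paper's own proof: induction on $\dim B$, the push-out decomposition of $E$, $\cyl(F)$ and $\cyl(rG)$ from Lemma \ref{lemma2}, additivity of Whitehead torsion to reduce to the three columns, and Lemma \ref{lemma1} to collapse the cylinders over a top simplex $\bar\tau$ onto $H_\tau\colon E_\tau\to Z$. The only step you leave slightly informal --- that $H_\tau$ represents $\xi(b,\zeta,s)$ on $E_\tau$ --- is exactly where the paper invokes fiber homotopy invariance and compatibility with pull-backs from \cite{Farrell-Lueck-Steimle(2010)}, which is the intended way to discharge the ``bookkeeping'' you flag.
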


\begin{proof}
The proof goes by induction over the dimension of $B$. If $\dim(B)=0$, then the claim is evidently true. 

Suppose that the claim holds for base spaces up to dimension $n-1$, and that $\dim(B)=n>0$. Denote by $M$ the set of maximal elements of $P$. By Lemma \ref{lemma2}, the following square is a push-out square:
\[\xymatrix{
{\coprod_{m\in M} \cyl(rG\vert_{\bar m- \{m\}})} \ar[rr] \ar@{_{(}->}[d] 
  && {\cyl (rG\vert_{P-M})} \ar@{_{(}->}[d]\\
{\coprod_{m\in M} \cyl(rG\vert_{\bar m})} \ar[rr]
 && {\cyl (rG)}
}\]
(Notice that $r(G\vert_{\bar m})=(rG)\vert_{\bar m}$ and similarly for $\bar m-\{m\}$ and $P-M$.) Moreover the homotopy equivalence $E\simeq \cyl(rG)$ is induced on the level of horizontal push-outs by a diagram of the following shape:
\[\xymatrix@C=3ex{
{\coprod_{m\in M} \cyl(rG\vert_{\bar m})} 
  & {\coprod_{m\in M} \cyl(rG\vert_{\bar m- \{m\}})} \ar@{_{(}->}[l] \ar[r]
  &  {\cyl (rG\vert_{P-M})}
\\
{\coprod_{m\in M} \cyl(G\vert_{\bar m})} \ar[u] 
  & {\coprod_{m\in M} \cyl(G\vert_{\bar m- \{m\}})} \ar@{_{(}->}[l] \ar[r] \ar[u] 
  &  {\cyl (G\vert_{P-M})} \ar[u]
\\
{\coprod_{m\in M} \cyl(F\vert_{\bar m})} \ar[u] \ar[d]
  & {\coprod_{m\in M} \cyl(F\vert_{\bar m- \{m\}})} \ar@{_{(}->}[l] \ar[r] \ar[u] \ar[d]
  &  {\cyl (F\vert_{P-M})} \ar[u] \ar[d]
\\
{\coprod_{m\in M} E\vert_m} 
  & {\coprod_{m\in M} E\vert_{\partial m}} \ar@{_{(}->}[l] \ar[r]
  &  {E\vert_{\vert P-M\vert}}
}\]

By the inductive assumption, the composite homotopy equivalences in the right-hand and middle columns represent the simple structures $\xi(b,\zeta,s)$ associated to 
\[E\vert_{\vert P-M\vert}\quad\mathrm{and}\quad \coprod_{m\in M} E\vert_{\partial m}.\] By additivity \cite[Lemma 3.15]{Farrell-Lueck-Steimle(2010)}, it is therefore enough to show that the same is true for the left-hand column.

Let us analyze the left-hand column more closely. Recall that, by Lemma \ref{lemma1}, $\cyl(rG\vert_{\bar m})$ is a mapping cylinder whose bottom is $rG(m)=\cyl(G\vert_{\bar m})$. The upper arrow is the inclusion of the bottom into the whole cylinder and hence a simple homotopy equivalence. Similarly, $\cyl(G\vert_{\bar m})$ is simple homotopy equivalent to $G(m)=Z$ in a way that the following diagram is commutative.
\[\xymatrix{
{\cyl(F\vert_{\bar m})} \ar[d]^{\simeq_s} \ar[rr] && {\cyl(G\vert_{\bar m})} \ar[d]^{\simeq_s}\\
F(m) \ar[rr]\ar@{=}[d] &&  G(m)\ar@{=}[d]\\
E\vert _m \ar[rr]^{H_m} && Z
}\]
In other words, the simple structure on $E\vert_m$ given by the the left-hand colum is given by the homotopy equivalence $H_m$.

Let us now analyze the simple structure $\xi(b,\zeta,s)$ on $E\vert_m$. By fiber homotopy invariance \cite[Lemma 3.11]{Farrell-Lueck-Steimle(2010)} we may assume that $E\vert_m$ is in fact a product $F_b\times D^n$ and thus pulled back from a fibration over a point. By the compatibility with pull-back \cite[Lemma 3.16]{Farrell-Lueck-Steimle(2010)}, the simple structure $\xi(b,\zeta,s)$ on $E\vert_m$ is therefore given by the map $H_m$, too. Thus the two simple structures agree.
\end{proof}

\typeout{-----------------------  References ------------------------}

\end{document}